\numberwithin{equation}{section}
\newtheorem{lemma}{Lemma}[section]
\newtheorem{corollary}[lemma]{Corollary}
\newtheorem{theorem}[lemma]{Theorem}
\newtheorem{proposition}[lemma]{Proposition}
\theoremstyle{definition}
\newtheorem{remark}[lemma]{Remark}
\newtheorem{definition}[lemma]{Definition}
\DeclareMathOperator{\Mod}{Mod}
\DeclareMathOperator{\modd}{mod}
\DeclareMathOperator{\Hom}{Hom}
\DeclareMathOperator{\Ext}{Ext}
\DeclareMathOperator{\nExt}{nExt}
\DeclareMathOperator{\Ker}{Ker}
\DeclareMathOperator{\Imm}{Im}
\DeclareMathOperator{\Eff}{Eff}
\DeclareMathOperator{\eff}{eff}
\newtheorem*{theorem a*}{Theorem A}
\newtheorem*{theorem b*}{Theorem B}
\newcounter{diagram}
\numberwithin{diagram}{section}
\newenvironment{diagram}
  {\stepcounter{diagram}\par\smallskip\noindent\begin{minipage}{\linewidth}\centering}
  {\par Diagram~\thediagram\end{minipage}\par\smallskip}
\begin{document}

\title{$n\mathbb{Z}$-abelian and $n\mathbb{Z}$-exact categories}

\author{Ramin Ebrahimi}
\address{Department of Pure Mathematics\\
Faculty of Mathematics and Statistics\\
University of Isfahan\\
P.O. Box: 81746-73441, Isfahan, Iran\\ and School of Mathematics, Institute for Research in Fundamental Sciences (IPM), P.O. Box: 19395-5746, Tehran, Iran}
\email{ramin69@sci.ui.ac.ir / r.ebrahimi@ipm.ir}

\author{Alireza Nasr-Isfahani}
\address{Department of Pure Mathematics\\
Faculty of Mathematics and Statistics\\
University of Isfahan\\
P.O. Box: 81746-73441, Isfahan, Iran\\ and School of Mathematics, Institute for Research in Fundamental Sciences (IPM), P.O. Box: 19395-5746, Tehran, Iran}
\email{nasr$_{-}$a@sci.ui.ac.ir / nasr@ipm.ir}

\subjclass[2010]{{18E99}, {16E30}, {16E99}}

\keywords{exact category, $n$-exact category, $n$-cluster tilting subcategory, $n\mathbb{Z}$-cluster tilting subcategory}

\begin{abstract}
In this paper we introduce $n\mathbb{Z}$-abelian and $n\mathbb{Z}$-exact categories by axiomatising properties of $n\mathbb{Z}$-cluster tilting subcategories. We study this categories and show that every $n\mathbb{Z}$-cluster tilting subcategory of an abelian (resp., exact) category has a natural structure of an $n\mathbb{Z}$-abelian (resp., $n\mathbb{Z}$-exact) category. Also we show that every small $n\mathbb{Z}$-abelian category arise in this way, and discuss the problem for $n\mathbb{Z}$-exact categories.
\end{abstract}

\maketitle

%%%%%%%%%%%%%%%%%%%%%%%%%%%%%%%%%%%%%%

\section{Introduction}
Higher Auslander-Reiten theory was introduced by Iyama in \cite{I2,I1}. It deals with $n$-cluster tilting subcategories of abelian and exact categories. Jasso in \cite{J} introduced
$n$-abelian and $n$-exact categories as a higher-dimensional analogue of abelian and exact
categories, that are axiomatisation of $n$-cluster tilting subcategories. Jasso proved that
each $n$-cluster tilting subcategory of an abelian (resp., exact) category is $n$-abelian (resp., $n$-exact). Also every $n$-abelian category has been shown to arise in this way \cite{EN, Kv}.

Special kind of $n$-cluster tilting subcategories, called $n\mathbb{Z}$-cluster tilting subcategories have nicer properties than general $n$-cluster tilting subcategories. An $n$-cluster tilting subcategory is said to be an {\em $n\mathbb{Z}$-cluster tilting subcategory} if satisfies the following additional condition.
\begin{itemize}
\item If $\Ext^k(\mathcal{M},\mathcal{M})\neq 0$ then $k\in n\mathbb{Z}$.
\end{itemize}

In this paper we give the following characterisation of $n\mathbb{Z}$-cluster tilting subcategories of exact categories. We refer the reader to Section 2 for the definitions of $n$-cluster tilting subcategory and $n$-exact sequence.

\begin{theorem}\label{1.1}
Let $\mathcal{M}$ be an $n$-cluster tilting subcategory of an exact category $\mathcal{E}$. The following conditions are equivalent.
\begin{itemize}
\item[(1)]
$\mathcal{M}$ is an $n\mathbb{Z}$-cluster tilting subcategory.
\item[(2)]
For every $X\in \mathcal{M}$ and every $n$-exact sequence $Y:Y^0\rightarrow Y^1\rightarrow \cdots Y^n\rightarrow Y^{n+1}$ the following induced sequence of abelian groups is exact.
\begin{align*}
0&\rightarrow \Hom_{\mathcal{E}}(X,Y^0)\rightarrow \Hom_{\mathcal{E}}(X,Y^1)\rightarrow\cdots\rightarrow \Hom_{\mathcal{E}}(X,Y^n)\rightarrow\Hom_{\mathcal{E}}(X,Y^{n+1}) \\
&\rightarrow \Ext_{\mathcal{E}}^n(X,Y^0)\rightarrow \Ext_{\mathcal{E}}^n(X,Y^1)\rightarrow\cdots\rightarrow \Ext_{\mathcal{E}}^n(X,Y^n)\rightarrow\Ext_{\mathcal{E}}^n(X,Y^{n+1}) \\
&\rightarrow \Ext_{\mathcal{E}}^{2n}(X,Y^0)\rightarrow \Ext_{\mathcal{E}}^{2n}(X,Y^1)\rightarrow\cdots\rightarrow \Ext_{\mathcal{E}}^{2n}(X,Y^n)\rightarrow\Ext_{\mathcal{E}}^{2n}(X,Y^{n+1}) \\
&\rightarrow \cdots.
\end{align*}
\item[(3)]
For every $Y\in \mathcal{M}$ and every $n$-exact sequence $X:X^0\rightarrow X^1\rightarrow \cdots X^n\rightarrow X^{n+1}$ the following induced sequence of abelian groups is exact.
\begin{align*}
0&\rightarrow \Hom_{\mathcal{E}}(X^{n+1},Y)\rightarrow \Hom_{\mathcal{E}}(X^n,Y)\rightarrow\cdots\rightarrow \Hom_{\mathcal{E}}(X^1,Y)\rightarrow\Hom_{\mathcal{E}}(X^0,Y) \\
&\rightarrow \Ext_{\mathcal{E}}^n(X^{n+1},Y)\rightarrow \Ext_{\mathcal{E}}^n(X^n,Y)\rightarrow\cdots\rightarrow \Ext_{\mathcal{E}}^n(X^1,Y)\rightarrow\Ext_{\mathcal{E}}^n(X^0,Y) \\
&\rightarrow \Ext_{\mathcal{E}}^{2n}(X^{n+1},Y)\rightarrow \Ext_{\mathcal{E}}^{2n}(X^n,Y)\rightarrow\cdots\rightarrow \Ext_{\mathcal{E}}^{2n}(X^1,Y)\rightarrow\Ext_{\mathcal{E}}^{2n}(X^0,Y) \\
&\rightarrow \cdots.
\end{align*}
\end{itemize}
\end{theorem}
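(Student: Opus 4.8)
The plan is to reduce to a single implication by duality, prove $(1)\Rightarrow(2)$ by a spectral-sequence collapse, and prove $(2)\Rightarrow(1)$ contrapositively via dimension shifting. Passing to the opposite exact category $\mathcal E^{op}$ sends $\mathcal M$ to the $n$-cluster tilting subcategory $\mathcal M^{op}$, reverses every $n$-exact sequence, and identifies $\Ext^k_{\mathcal E}(\mathcal M,\mathcal M)$ with $\Ext^k_{\mathcal E^{op}}(\mathcal M^{op},\mathcal M^{op})$; hence the $n\mathbb{Z}$-condition $(1)$ is self-dual and condition $(3)$ for $\mathcal E$ is literally condition $(2)$ for $\mathcal E^{op}$, so it suffices to prove $(1)\Leftrightarrow(2)$. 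Throughout I would record that $(1)$ is equivalent to $\Ext^j_{\mathcal E}(X,M)=0$ for all $X,M\in\mathcal M$ and all $j\ge1$ with $n\nmid j$ (the range $1\le j\le n-1$ being automatic from $n$-cluster tilting), and that every $n$-exact sequence $Y^0\to\cdots\to Y^{n+1}$ splits into conflations $0\to K^{i-1}\to Y^i\to K^i\to 0$ $(1\le i\le n)$ with $K^0=Y^0$, $K^n=Y^{n+1}$, and $K^i\in\mathcal E$.

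For $(1)\Rightarrow(2)$, the acyclic complex $Y^\bullet=(Y^0\to\cdots\to Y^{n+1})$ is isomorphic to $0$ in $D^b(\mathcal E)$, so the brutal-filtration hypercohomology spectral sequence $E_1^{p,q}=\Ext^q_{\mathcal E}(X,Y^p)\Rightarrow \Hom_{D^b(\mathcal E)}(X,Y^\bullet[p+q])=0$ converges to zero. By $(1)$ the only nonzero rows are $q\in n\mathbb{Z}_{\ge0}$, so the bidegree of $d_r$ forces all differentials to vanish except $d_1$ (whose $E_2$ is the cohomology of the complexes $\Ext^{an}_{\mathcal E}(X,Y^\bullet)$) and $d_{n+1}\colon E_2^{0,q}\to E_2^{n+1,q-n}$, the latter nonzero only from column $0$ to column $n+1$. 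Since $E_\infty=0$, the interior terms $E_2^{p,q}$ $(1\le p\le n)$ must already vanish, which is exactly interior exactness of each block $\Ext^{an}_{\mathcal E}(X,Y^0)\to\cdots\to\Ext^{an}_{\mathcal E}(X,Y^{n+1})$, while $E_\infty=0$ at columns $0$ and $n+1$ forces $d_{n+1}\colon E_2^{0,q}\xrightarrow{\sim}E_2^{n+1,q-n}$ to be an isomorphism. Splicing the blocks along these isomorphisms yields precisely the long exact sequence in $(2)$; I would finish by checking that the resulting connecting map $\Ext^{an}_{\mathcal E}(X,Y^{n+1})\to\Ext^{(a+1)n}_{\mathcal E}(X,Y^0)$ is the Yoneda product with the class of the $n$-exact sequence, which is the standard identification of $d_{n+1}$ with an iterated connecting homomorphism.

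For $(2)\Rightarrow(1)$, I would argue by contradiction, choosing $k$ minimal with $n\nmid k$ and $\Ext^k_{\mathcal E}(M,N)\ne0$ for some $M,N\in\mathcal M$, and writing $k=an+b$ with $a\ge1$, $1\le b\le n-1$. Using that $\mathcal M$ is $n$-cluster tilting I would realise $N$ as the initial term of an $n$-exact sequence $N=Y^0\to\cdots\to Y^{n+1}$ arising from an $\mathcal M$-coresolution, and examine the $b$-th syzygy $K^b$, which is represented in $D^b(\mathcal E)$ by the truncated complex $(Y^0\to\cdots\to Y^b)[b]$. In the hypercohomology spectral sequence computing $\Ext^{an}_{\mathcal E}(M,K^b)$, minimality of $k$ annihilates every off-degree contribution strictly below $k$, so that on the relevant antidiagonal the only survivors are $\Ext^k_{\mathcal E}(M,N)$ and the multiple-of-$n$ terms $\Ext^{an}_{\mathcal E}(M,Y^p)$; exactness of the sequence in $(2)$ for this $Y^\bullet$ determines the multiple-of-$n$ contributions completely, leaving no room for the class in $\Ext^k_{\mathcal E}(M,N)$ and forcing it to vanish, a contradiction.

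The delicate point is entirely in $(2)\Rightarrow(1)$: the displayed sequence only ever involves $\Ext$ in degrees divisible by $n$, so a nonzero $\Ext^k$ with $n\nmid k$ is invisible to it directly. The crux is therefore the comparison, through dimension shifting across the syzygies $K^i$, between the invisible off-degree class and the visible multiple-of-$n$ terms controlled by $(2)$; matching the two spectral sequences and their connecting maps precisely is where I expect the real work to lie, and an elementary iterated-dimension-shift argument, while possible, runs into the same core difficulty.
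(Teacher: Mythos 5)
Your duality reduction of $(3)$ to $(2)$ and your spectral-sequence proof of $(1)\Rightarrow(2)$ are essentially sound, and the latter is a genuinely different route from the paper: the paper argues elementarily, splicing the exact sequences of \cite[Proposition 2.2]{JK} along the syzygy filtration of Diagram 4.1 and using Lemma \ref{4.5} to control $\Ext_{\mathcal{E}}^k(\mathcal{M},C^j)$, whereas you collapse the brutal-filtration hypercohomology spectral sequence $E_1^{p,q}=\Ext^q_{\mathcal{E}}(X,Y^p)\Rightarrow 0$; this works in any exact category granted the standard identification $\Ext^q_{\mathcal{E}}(X,Y)\cong\Hom_{D^b(\mathcal{E})}(X,Y[q])$ and the exact-couple spectral sequence of a finite filtration, and it buys a cleaner derivation of the connecting maps at the cost of heavier machinery. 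The problem is in $(2)\Rightarrow(1)$, where your sketch has a genuine gap that your own closing paragraph half-acknowledges.

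Concretely: you choose the coresolution $N=Y^0\rightarrow Y^1\rightarrow\cdots\rightarrow Y^{n+1}$ \emph{independently} of the nonzero class in $\Ext^k_{\mathcal{E}}(M,N)$, $k=an+b$. Minimality of $k$ kills the rows $q<k$ with $n\nmid q$, but it says nothing about row $q=k$ itself; in your spectral sequence the entry $E_1^{0,k}=\Ext^k_{\mathcal{E}}(M,N)$ supports $d_1\colon E_1^{0,k}\rightarrow E_1^{1,k}=\Ext^k_{\mathcal{E}}(M,Y^1)$, which minimality does not annihilate, so your class may simply die at $E_2$ and convergence to $\Ext^{an}_{\mathcal{E}}(M,K^b)$ forces nothing --- ``no room on the antidiagonal'' only constrains classes that survive $d_1$. (In dimension-shifting terms: the very first shift needs the image of the class in $\Ext^k_{\mathcal{E}}(M,Y^1)$ to vanish, which is unavailable, and restarting with $N:=Y^1$ has no termination guarantee.) The paper's proof repairs exactly this point by building the $n$-exact sequence \emph{out of the chosen $t$-fold extension}: the inflation $Y\rightarrowtail E^1$ is composed with an inflation $E^1\rightarrowtail Y^1$, $Y^1\in\mathcal{M}$, so the pushforward of $\xi$ along $Y\rightarrow Y^1$ factors through the pushout of $\xi$ along its own first inflation and hence vanishes, guaranteeing survival; then, after aligning $\xi$ with the coresolution up to position $i$ via Remark \ref{3.4} and the minimality vanishings $\Ext^r_{\mathcal{E}}(X,C^j)=0$, it exhibits an explicit failure of $(2)$: $\Ext^{kn}_{\mathcal{E}}(X,g^i)$ is not surjective because $\theta(\xi)\neq 0$, while $\Ext^{kn}_{\mathcal{E}}(X,C^i)=\Ker\bigl(\Ext^{kn}_{\mathcal{E}}(X,g^{i+1})\bigr)\subseteq\Ker\bigl(\Ext^{kn}_{\mathcal{E}}(X,f^{i+1})\bigr)$, whence $\Imm\bigl(\Ext^{kn}_{\mathcal{E}}(X,f^{i})\bigr)\neq\Ker\bigl(\Ext^{kn}_{\mathcal{E}}(X,f^{i+1})\bigr)$. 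Note also that even with the adapted coresolution your final step needs the injectivity of $\Ext^{an}_{\mathcal{E}}(M,C^b)\rightarrow\Ext^{an}_{\mathcal{E}}(M,Y^{b+1})$ (a further consequence of minimality) to convert a surviving class into a violation of the exactness in $(2)$; neither this nor the adaptation of the coresolution to the class appears in your proposal, so as written the contradiction does not go through.
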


Iyama and Jasso in \cite[Definition-Proposition 2.15]{IJ} proved the above theorem for an $n$-cluster tilting
subcategory $\mathcal{M}\subseteq \modd(\mathcal{A})$, where $\mathcal{A}$ is a dualizing $R$-variety. They also showed that $\mathcal{M}\subseteq \modd(\mathcal{A})$ is an $n\mathbb{Z}$-cluster tilting subcategory if and only if $\mathcal{M}$ closed under $n$th syzygies if and only if $\mathcal{M}$ closed under $n$th cosyzygies. The proof of Iyama and Jasso of this theorem in \cite{IJ}, heavily biased on the enough projectives and enough injectives properties of $\mathcal{A}$. But in Theorem \ref{1.1}, we don't have such assumptions.

Iyama in \cite[Appendix A]{I1} showed that for an $n$-cluster tilting subcategory $\mathcal{M}\subseteq \modd\Lambda$, where $\Lambda$ is an Artin algebra and for every two objects $M,N\in \mathcal{M}$, every element in $\Ext^n_{\Lambda}(M,N)$ is Yoneda equivalent to a unique (up to homotopy) $n$-fold extension of $M$ by $N$ with terms in $\mathcal{M}$. In the following theorem we give the following more general version of this result for any $n$-cluster tilting subcategory of an exact category $\mathcal{E}$.

\begin{theorem}\label{1.2}
Let $\mathcal{M}$ be an $n$-cluster tilting subcategory of an exact category $\mathcal{E}$ and
\begin{equation}
\xi:0\rightarrow X^0\rightarrow E^1\rightarrow E^2\rightarrow\cdots\rightarrow E^n\rightarrow X^{n+1}\rightarrow 0,\notag
\end{equation}
with $X^0,X^{n+1}\in \mathcal{M}$ be an acyclic sequence in $\mathcal{E}$. Then there is a unique (up to homotopy) $n$-exact sequence
\begin{equation}
0\rightarrow X^0\rightarrow X^1\rightarrow X^2\rightarrow\cdots\rightarrow X^n\rightarrow X^{n+1}\rightarrow 0,\notag
\end{equation}
Yoneda equivalent to $\xi$.

If moreover $\mathcal{M}$ be an $n\mathbb{Z}$-cluster tilting subcategory and
\begin{equation}
\xi:0\rightarrow X^0\overset{f^0}{\longrightarrow} E^1\overset{f^1}{\longrightarrow} \cdots\overset{f^{kn-2}}{\longrightarrow} E^{kn-1}\overset{f^{kn-1}}{\longrightarrow} E^{kn}\overset{f^{kn}}{\longrightarrow} X^{kn+1}\rightarrow 0,\notag
\end{equation}
be a $kn$-fold extension with $X^0,X^{kn+1}\in \mathcal{M}$, then $\xi$ is Yoneda equivalent to splicing of $k$, $n$-exact sequences.
\end{theorem}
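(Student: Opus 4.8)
The plan is to prove existence and uniqueness for the $n$-fold case first and then bootstrap to the $kn$-fold statement. Throughout I rely on two structural consequences of the definition of an $n$-cluster tilting subcategory $\mathcal{M}\subseteq\mathcal{E}$. First, since the vanishing range $\Ext^i_{\mathcal{E}}(\mathcal{M},\mathcal{M})=0$ for $1\le i\le n-1$ includes $i=1$ (the case $n=1$ being trivial, as then $\mathcal{M}=\mathcal{E}$), the subcategory $\mathcal{M}$ is closed under extensions. Second, since $\mathcal{M}$ is generating-cogenerating and functorially finite, every object of $\mathcal{E}$ has $\mathcal{M}$-resolutions and $\mathcal{M}$-coresolutions, and the usual dimension-shift against the $\Ext$-vanishing shows that their $i$-th (co)syzygies lie in $\mathcal{M}$ for $1\le i\le n-1$; in particular all cocycles of an $n$-exact sequence lie in $\mathcal{M}$ (a Schanuel-type comparison with a genuine coresolution handles the cokernels of arbitrary $\mathcal{M}$-inflations).

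For existence in the first part, I would decompose $\xi$ into conflations with cocycles $C^1,\dots,C^{n-1}$ and rebuild it from the left: choose an inflation $E^1\hookrightarrow M^1$ with $M^1\in\mathcal{M}$ (possible since $\mathcal{M}$ is cogenerating), form the induced conflation $0\to X^0\to M^1\to \bar C^1\to 0$, push out the next conflation of $\xi$ along $C^1\to\bar C^1$, and iterate. This yields a chain map $\xi\to\eta$ that is the identity on $X^0$ and on $X^{n+1}$, where $\eta$ has middle terms $M^1,\dots,M^{n-1}\in\mathcal{M}$ and a last term $\bar E^n$. The one point needing argument is that $\bar E^n\in\mathcal{M}$: the truncation $0\to X^0\to M^1\to\cdots\to M^{n-1}\to\bar C^{n-1}\to 0$ is an $\mathcal{M}$-coresolution of $X^0$, so $\bar C^{n-1}\in\mathcal{M}$, and then $\bar E^n$ is an extension of $X^{n+1}\in\mathcal{M}$ by $\bar C^{n-1}\in\mathcal{M}$, hence lies in $\mathcal{M}$ by extension-closure (a short computation confirms $\Ext^i_{\mathcal{E}}(\bar E^n,\mathcal{M})=0$ for $1\le i\le n-1$). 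As an acyclic sequence with all terms in $\mathcal{M}$ is $n$-exact (Section 2), $\eta$ is the desired representative, Yoneda equivalent to $\xi$.

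For uniqueness up to homotopy, let $\eta,\eta'$ be two $n$-exact sequences Yoneda equivalent to $\xi$. Because all cocycles of $\eta$ lie in $\mathcal{M}$ and all terms of $\eta'$ lie in $\mathcal{M}$, lifting the identity of $X^0$ step by step along $\eta'$ meets only obstructions in $\Ext^1_{\mathcal{E}}(\mathcal{M},\mathcal{M})=0$, producing a chain map $\eta\to\eta'$ fixing the end terms. I would then invoke the comparison theorem for $n$-exact sequences, the $n$-dimensional analogue of the comparison of injective coresolutions: such a chain map fixing the ends, between two $n$-exact sequences representing the same Yoneda class, is a homotopy equivalence. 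Equivalently, one notes that the $\mathcal{M}$-coresolution of $X^0$ is unique up to homotopy (again as $\Ext^1(\mathcal{M},\mathcal{M})=0$), and that under the dimension-shift isomorphism $\Ext^n_{\mathcal{E}}(X^{n+1},X^0)\cong\Ext^1_{\mathcal{E}}(X^{n+1},\Sigma^{n-1}X^0)$ the representative is the splice of this coresolution with the conflation determined up to isomorphism by the class, so homotopic coresolutions give homotopy equivalent $n$-exact sequences.

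For the second part the $n\mathbb{Z}$-hypothesis enters precisely to force cosyzygies into $\mathcal{M}$ at \emph{every} step: by the characterisation recalled after Theorem \ref{1.1}, $\mathcal{M}$ is closed under $n$-th cosyzygies, and combined with the fact that $\le(n-1)$-st cosyzygies always lie in $\mathcal{M}$, every cosyzygy $\Sigma^j X^0$ of an object $X^0\in\mathcal{M}$ lies in $\mathcal{M}$. I would run the same left-to-right approximation replacement on the $kn$-fold extension to obtain a Yoneda equivalent $\hat\xi$ with \emph{all} terms in $\mathcal{M}$; its last term now lies in $\mathcal{M}$ because the relevant cosyzygy $\bar C^{kn-1}=\Sigma^{kn-1}X^0$ does. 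The cocycles of $\hat\xi$ at the positions $n,2n,\dots,(k-1)n$ are then cosyzygies $\Sigma^{jn}X^0\in\mathcal{M}$, so cutting $\hat\xi$ there produces $k$ acyclic $\mathcal{M}$-sequences, i.e. $k$ $n$-exact sequences, whose splice is $\hat\xi$ and hence Yoneda equivalent to $\xi$. The main obstacle throughout is controlling where the intermediate objects live: the theorem hinges on showing the rebuilt middle terms, and in the second part the cutting cocycles at multiples of $n$, actually belong to $\mathcal{M}$. This is exactly where extension-closure and, for the $kn$-fold statement, the $n\mathbb{Z}$-condition (closure under $n$-th cosyzygies, equivalently the $\Ext$-vanishing off $n\mathbb{Z}$ packaged in Theorem \ref{1.1}) are indispensable, since without $n\mathbb{Z}$ the cocycle at position $n$ need not lie in $\mathcal{M}$ and $\xi$ cannot be cut into $n$-exact pieces.
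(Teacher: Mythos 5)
Your route (rebuilding $\xi$ inside $\mathcal{E}$ by a single left-to-right pass of pushouts along left $\mathcal{M}$-approximations) is genuinely different from the paper's, which transports everything to the functor category via Proposition \ref{4.1} and the Gabriel--Quillen embedding and does the replacement right-to-left (Theorems \ref{3.5}, \ref{4.2}, \ref{4.3}, Proposition \ref{3.6}); but your version has a gap at its load-bearing step. In your construction the inflation $\bar{C}^{j-1}\rightarrowtail M^j$ is the composite of $\bar{C}^{j-1}\rightarrowtail \bar{E}^j$ with an approximation $\bar{E}^j\rightarrowtail M^j$, and such a composite is \emph{not} a left $\mathcal{M}$-approximation: for $N\in\mathcal{M}$ the image of $\Hom(M^j,N)\rightarrow\Hom(\bar{C}^{j-1},N)$ coincides with the image of $\Hom(\bar{E}^j,N)\rightarrow\Hom(\bar{C}^{j-1},N)$, whose cokernel embeds in $\Ext^1_{\mathcal{E}}(C^j,N)$ with $C^j$ an arbitrary cocycle of $\xi$, so it need not be surjective. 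Hence your truncation is not an $\mathcal{M}$-coresolution in the sense your dimension shift requires (the base case $\Ext^1(\bar{C}^j,\mathcal{M})=0$ fails), and $\bar{C}^{n-1}$, and with it $\bar{E}^n$, need not lie in $\mathcal{M}$: already for $n=2$ one has $\bar{C}^1\in\mathcal{M}$ if and only if the composite $X^0\rightarrow M^1$ is a left approximation, which is false in general. This is exactly why the paper fixes the \emph{right} end first (an approximation deflation, phrased as effaceability of the cokernel), pulls back, and then replaces the middle terms one at a time using surjectivity of connecting maps such as $\Ext^{n-2}(C^{n-2},H(Y))\rightarrow\Ext^{n-1}(C^{n-1},H(Y))$ supplied by $n$-rigidity --- a zigzag of Yoneda equivalences rather than one chain map, finishing with an $n$-pushout inside $\mathcal{M}$.

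Two background claims you lean on are also wrong or unavailable. The assertion that ``all cocycles of an $n$-exact sequence lie in $\mathcal{M}$'' is false for $n\geq 2$: if every cocycle were in $\mathcal{M}$, each conflation $0\rightarrow C^{j-1}\rightarrow X^j\rightarrow C^j\rightarrow 0$ would split since $\Ext^1_{\mathcal{E}}(\mathcal{M},\mathcal{M})=0$, so every admissible $n$-exact sequence would be contractible. This sinks your uniqueness argument, whose lifting of $1_{X^0}$ needs precisely those cocycles in $\mathcal{M}$; the paper's Proposition \ref{3.6} instead compares the two sequences through the exact sequences of representables augmented by $\Ext^n_{\mathcal{L}_1(\mathcal{M})}(-,X^0)$, uses that Yoneda equivalence forces equal images of the connecting maps, and then corrects $h^0$ to the identity by a homotopy --- a subtlety your sketch omits. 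For the $kn$-fold part, the Iyama--Jasso ``closed under $n$-th cosyzygies'' criterion concerns syzygies with respect to projectives/injectives and, as the paper stresses after Theorem \ref{1.1}, is not available in a general exact category; moreover the cocycles of your rebuilt $\hat{\xi}$ are simply not the canonical approximation-cosyzygies of $X^0$. Indeed, cosyzygies of an object of $\mathcal{M}$ along an approximation coresolution lie in $\mathcal{M}$ for \emph{every} $n$-cluster tilting subcategory (the first approximation inflation even splits), so if your identification $\bar{C}^{jn}=\Sigma^{jn}X^0$ were legitimate, your argument would prove the splicing statement without any $n\mathbb{Z}$ hypothesis, contradicting the equivalence recorded in Remark \ref{4.4}. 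The paper's induction on $k$ in Theorem \ref{4.3} --- split off the rightmost $n$-exact piece via a right $\mathcal{M}$-approximation and rerun the Theorem \ref{3.5} replacement, with the off-$n\mathbb{Z}$ $\Ext$-vanishing providing the connecting-map surjectivity at the new positions --- is where the $n\mathbb{Z}$ condition actually does its work.
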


Let $\mathcal{M}$ be an $n$-exact category. For every two object $X,Y\in \mathcal{M}$ we can define $\nExt^1(X,Y)$ as Yoneda equivalence classes of $n$-exact sequences starting from $Y$ and ending with $X$. Also for a positive integer $k$, $\nExt^k(X,Y)$ is defined as Yoneda equivalence classes of $k$-fold $n$-extensions. We refer the reader to Section 2 for the definition of $n$-exact category and to Section 5 for the definitions of $\nExt^1(X,Y)$ and $\nExt^k(X,Y)$.
Motivated by Theorems \ref{1.1} and \ref{1.2} we define $n\mathbb{Z}$-exact (resp. $n\mathbb{Z}$-abelian) categories as axiomatisation of $n\mathbb{Z}$-cluster tilting subcategories of exact (resp. abelian) categories (See Definition \ref{5.2}). Let $\mathcal{M}$ be an $n$-exact (resp. $n$-abelian) category. We say that $\mathcal{M}$ is an {\em$n\mathbb{Z}$-exact} (resp. {\em$n\mathbb{Z}$-abelian}) category if for every object $X\in\mathcal{M}$, $\nExt^*(X,-)$ and $\nExt^*(-,X)$ induce long exact sequences for all $n$-exact sequences.

We show that an $n$-cluster tilting subcategory of an abelian (resp. exact) category is an $n\mathbb{Z}$-exact (resp. $n\mathbb{Z}$-abelian) category if and only if it is an $n\mathbb{Z}$-cluster tilting subcategory of an abelian (resp. exact) category. This shows that $n\mathbb{Z}$-exact (resp. $n\mathbb{Z}$-abelian) categories are good axiomatisation of $n\mathbb{Z}$-cluster tilting subcategories of exact (resp. abelian) categories.

The paper is organized as follows.
In section 2 we recall the definitions of $n$-cluster tilting subcategories of abelian and exact categories, $n$-exact and $n$-abelian categories and some of their basic properties. In section 3 we recall the Gabriel-Quillen embedding for $n$-exact categories and prove that it has expected properties as the classical Gabriel-Quillen embedding.
In section 4 we study $n\mathbb{Z}$-cluster tilting subcategories and prove Theorem \ref{1.1} and Theorem \ref{1.2}.
In section 5, motivated by the results of previous sections we introduce $n\mathbb{Z}$-abelian and $n\mathbb{Z}$-exact categories, and we prove that every small $n\mathbb{Z}$-abelian category is equivalent to an $n\mathbb{Z}$-cluster tilting subcategories of an abelian category. For small $n\mathbb{Z}$-exact categories we prove a similar result using the Gabriel-Quillen embedding.

\subsection{Notation}
Throughout this paper, unless otherwise stated, $n$ always denotes a fixed positive integer. All categories we consider are
assumed to be additive and by subcategory we mean full subcategory
which is closed under isomorphisms.

%%%%%%%%%%%%%%%%%%%%%%%%%%%%%%%%%%%%%%

\section{preliminaries}
In this section we recall the definitions of $n$-exact category, $n$-abelian category and $n$-cluster tilting subcategory. Also we recall some basic results that we need in the rest of the paper. For further information the readers are referred to \cite{I2, I3, I1, J}.

\subsection{$n$-exact categories}
Let $\mathcal{M}$ be an additive category and $f:A\rightarrow B$ a morphism in $\mathcal{M}$. A {\em weak cokernel} of $f$ is a morphism $g:B\rightarrow C$ such that for all $C^{\prime} \in \mathcal{M}$  the sequence of abelian groups
\begin{equation}
\Hom(C,C')\overset{(g,C')}{\longrightarrow} \Hom(B,C')\overset{(f,C')}{\longrightarrow} \Hom(A,C') \notag
\end{equation}
is exact. The concept of {\em weak kernel} is defined dually.

Let $d^0:X^0 \rightarrow X^1$ be a morphism in $\mathcal{M}$. An {\em$n$-cokernel} of $d^0$ is a sequence
\begin{equation}
(d^1, \ldots, d^n): X^1 \overset{d^1}{\rightarrow} X^2 \overset{d^2}{\rightarrow}\cdots \overset{d^{n-1}}{\rightarrow} X^n \overset{d^n}{\rightarrow} X^{n+1} \notag
\end{equation}
of objects and morphisms in $\mathcal{M}$ such that for each $Y\in \mathcal{M}$
the induced sequence of abelian groups
\begin{align}
0 \rightarrow \Hom(X^{n+1},Y) \rightarrow \Hom(X^n,Y) \rightarrow\cdots\rightarrow \Hom(X^1,Y) \rightarrow \Hom(X^0,Y) \notag
\end{align}
is exact. Equivalently, the sequence $(d^1, \ldots, d^n)$ is an $n$-cokernel of $d^0$ if for all $1\leq k\leq n-1$
the morphism $d^k$ is a weak cokernel of $d^{k-1}$, and $d^n$ is moreover a cokernel of $d^{n-1}$ \cite[Definition 2.2]{J}. The concept of {\em$n$-kernel} of a morphism is defined dually.
\begin{definition}\cite[Definition 2.4]{L}\label{d1}
Let $\mathcal{M}$ be an additive category. A {\em left $n$-exact sequence} in $\mathcal{M}$ is a complex
\begin{equation}
X^0 \overset{d^0}{\rightarrow} X^1 \overset{d^1}{\rightarrow} \cdots \overset{d^{n-1}}{\rightarrow} X^n \overset{d^n}{\rightarrow} X^{n+1} \notag
\end{equation}
such that $(d^0, \ldots, d^{n-1})$ is an $n$-kernel of $d^n$. The concept of {\em right $n$-exact sequence} is defined dually. An {\em$n$-exact sequence} is a sequence which is both a right $n$-exact sequence and a left $n$-exact sequence.
\end{definition}

\begin{definition}$($\cite[Definition 3.1]{J}$)$
An {\em $n$-abelian} category is an additive category $\mathcal{M}$ which satisfies the following axioms.
\begin{itemize}
\item[(i)]
The category $\mathcal{M}$ is idempotent complete.
\item[(ii)]
Every morphism in $\mathcal{M}$ has an $n$-kernel and an $n$-cokernel.
\item[(iii)]
For every monomorphism $d^0:X^0 \rightarrow X^1$ in $\mathcal{M}$ and for every $n$-cokernel $(d^1, \ldots, d^n)$ of $d^0$, the following sequence is $n$-exact:
\begin{equation}
X^0 \overset{d^0}{\rightarrow} X^1 \overset{d^1}{\rightarrow} \cdots \overset{d^{n-1}}{\rightarrow} X^n \overset{d^n}{\rightarrow} X^{n+1}. \notag
\end{equation}
\item[(iv)]
For every epimorphism $d^n:X^n \rightarrow X^{n+1}$ in $\mathcal{M}$ and for every $n$-kernel $(d^0, \ldots, d^{n-1})$ of $d^n$, the following sequence is $n$-exact:
\begin{equation}
X^0 \overset{d^0}{\rightarrow} X^1 \overset{d^1}{\rightarrow} \cdots \overset{d^{n-1}}{\rightarrow} X^n \overset{d^n}{\rightarrow} X^{n+1}. \notag
\end{equation}
\end{itemize}
\end{definition}

Let $X$ and $Y$ be two $n$-exact sequences. We remained that a morphism $f:X\rightarrow Y$ of $n$-exact sequences is a morphism of complexes.
A morphism $f:X\rightarrow Y$ of $n$-exact sequences is called a{\em weak isomorphism} if $f^k$
and $f^{k+1}$ are isomorphisms for some $k\in \{0, 1, \cdots, n + 1\}$ with $n + 2 := 0$ \cite[Definition 4.1]{J}.

Let
\begin{center}
\begin{tikzpicture}
\node (X1) at (-4,1) {$X$};
\node (X2) at (-2,1) {$X^0$};
\node (X3) at (0,1) {$X^1$};
\node (X4) at (2,1) {$\ldots$};
\node (X5) at (4,1) {$X^{n-1}$};
\node (X6) at (6,1) {$X^n$};
\node (X7) at (-4,-1) {$Y$};
\node (X8) at (-2,-1) {$Y^0$};
\node (X9) at (0,-1) {$Y^1$};
\node (X10) at (2,-1) {$\ldots$};
\node (X11) at (4,-1) {$Y^{n-1}$};
\node (X12) at (6,-1) {$Y^n$};
\draw [->,thick] (X1) -- (X7) node [midway,left] {$f$};
\draw [->,thick] (X2) -- (X8) node [midway,left] {$f^0$};
\draw [->,thick] (X3) -- (X9) node [midway,left] {$f^1$};
\draw [->,thick] (X5) -- (X11) node [midway,left] {$f^{n-1}$};
\draw [->,thick] (X6) -- (X12) node [midway,left] {$f^n$};
\draw [->,thick] (X2) -- (X3) node [midway,above] {$d_X^0$};
\draw [->,thick] (X3) -- (X4) node [midway,above] {$d_X^1$};
\draw [->,thick] (X4) -- (X5) node [midway,above] {$d_X^{n-2}$};
\draw [->,thick] (X5) -- (X6) node [midway,above] {$d_X^{n-1}$};
\draw [->,thick] (X8) -- (X9) node [midway,below] {$d_Y^0$};
\draw [->,thick] (X9) -- (X10) node [midway,below] {$d_Y^1$};
\draw [->,thick] (X10) -- (X11) node [midway,below] {$d_Y^{n-2}$};
\draw [->,thick] (X11) -- (X12) node [midway,below] {$d_Y^{n-1}$};
\end{tikzpicture}
\end{center}
be a morphism of complexes in an additive category. Recall that the {\em mapping cone}
$C = C( f )$ of $f$
is the complex
\begin{equation} \label{Cone}
X^0 \xrightarrow{d_C^{-1}} X^1\oplus Y^0 \xrightarrow{d_C^0} \cdots \xrightarrow{d_C^{n-2}} X^n\oplus Y^{n-1}\xrightarrow{d_C^{n-1}} Y^n,
\end{equation}
where
\begin{center}
$d_C^k:=\begin{pmatrix}
-d_X^{k+1}& 0\\\\
f^{k+1} & d_Y^k
\end{pmatrix}: X^{k+1}\oplus Y^k\to X^{k+2} \oplus Y^{k+1}$
\end{center}
for each
$k\in \{-1, 0, \ldots, n-1\}$.
In particular
$d_C^{-1}=\begin{pmatrix}
-d_X^0\\
f^0
\end{pmatrix}$
and
$d_C^{n-1}=\begin{pmatrix}
f^{n} & d_Y^{n-1}
\end{pmatrix}$.
\begin{itemize}
\item
The above diagram is called an {\em$n$-pullback} of
$Y$
along
$f^n$
if the complex \eqref{Cone} is a left $n$-exact sequence.
\item
The above diagram is an {\em$n$-pushout} of
$X$
along
$f^0$
if the complex \eqref{Cone} is a right $n$-exact sequence \cite[Definition 2.11]{J}.
\end{itemize}
\begin{definition}$($\cite[Definition 4.2]{J}$)$
Let $\mathcal{M}$ be an additive category. An {\em$n$-exact structure}
on $\mathcal{M}$ is a class $\mathcal{X}$ of $n$-exact sequences in $\mathcal{M}$, closed under weak isomorphisms of $n$-exact
sequences, and which satisfies the following axioms:
\begin{itemize}
\item[$(E0)$]
The sequence $0\rightarrowtail 0\rightarrow \cdots\rightarrow 0\twoheadrightarrow 0$ is an $\mathcal{X}$-admissible $n$-exact sequence.
\item[$(E1)$]
The class of $\mathcal{X}$-admissible monomorphisms is closed under composition.
\item[$(E1^{op})$]
The class of $\mathcal{X}$-admissible epimorphisms is closed under composition.
\item[$(E2)$]
For each $\mathcal{X}$-admissible $n$-exact sequence $X$ and each morphism $f:X^0\rightarrow Y^0$, there exists an $n$-pushout diagram of $(d_X^0,\cdots , d_X^{n-1})$ along $f$ such that $d_Y^0$ is an $\mathcal{X}$-admissible monomorphism. The situation is illustrated in the following commutative diagram:
\begin{center}
\begin{tikzpicture}
\node (X1) at (-4,1) {$X^0$};
\node (X2) at (-2,1) {$X^1$};
\node (X3) at (0,1) {$\cdots$};
\node (X4) at (2,1) {$X^n$};
\node (X5) at (4,1) {$X^{n+1}$};
\node (X6) at (-4,-1) {$Y^0$};
\node (X7) at (-2,-1) {$Y^1$};
\node (X8) at (0,-1) {$\cdots$};
\node (X9) at (2,-1) {$X^n$};
\draw [>->,thick] (X1) -- (X2) node [midway,above] {$d_X^0$};
\draw [->,thick] (X2) -- (X3) node [midway,above] {$d_X^1$};
\draw [->,thick] (X3) -- (X4) node [midway,above] {$d_X^{n-1}$};
\draw [->>,thick] (X4) -- (X5) node [midway,above] {$d_X^n$};
\draw [>->,thick,dashed] (X6) -- (X7) node [midway,above] {$d_Y^0$};
\draw [->,thick,dashed] (X7) -- (X8) node [midway,above] {$d_Y^1$};
\draw [->,thick,dashed] (X8) -- (X9) node [midway,above]{$d_Y^{n-1}$};
\draw [->,thick] (X1) -- (X6) node [midway,left] {$f$};
\draw [->,thick,dashed] (X2) -- (X7) node [midway,left] {};
\draw [->,thick,dashed] (X4) -- (X9) node [midway,left] {};
\end{tikzpicture}
\end{center}

\item[$(E2^{op})$]
For each $\mathcal{X}$-admissible $n$-exact sequence $Y$ and each morphism $g:X^{n+1}\rightarrow Y^{n+1}$, there exists an $n$-pullback diagram of $(d_Y^1,\cdots , d_Y^{n})$ along $g$ such that $d_X^{n}$ is an $\mathcal{X}$-admissible epimorphism. The situation is illustrated in the following commutative diagram:
\begin{center}
\begin{tikzpicture}
%\node (X1) at (-4,1) {$X^0$};
\node (X2) at (-2,1) {$X^1$};
\node (X3) at (0,1) {$\cdots$};
\node (X4) at (2,1) {$X^n$};
\node (X5) at (4,1) {$X^{n+1}$};
\node (X6) at (-4,-1) {$Y^0$};
\node (X7) at (-2,-1) {$Y^1$};
\node (X8) at (0,-1) {$\cdots$};
\node (X9) at (2,-1) {$Y^n$};
\node (X10) at (4,-1) {$Y^{n+1}$};
%\draw [->,thick] (X1) -- (X2) node [midway,above] {$d_X^0$};
\draw [->,thick,dashed] (X2) -- (X3) node [midway,above] {$d_X^1$};
\draw [->,thick,dashed] (X3) -- (X4) node [midway,above] {$d_X^{n-1}$};
\draw [->>,thick,dashed] (X4) -- (X5) node [midway,above] {$d_X^n$};
\draw [>->,thick] (X6) -- (X7) node [midway,above] {$d_Y^0$};
\draw [->,thick] (X7) -- (X8) node [midway,above] {$d_Y^1$};
\draw [->,thick] (X8) -- (X9) node [midway,above] {$d_Y^{n-1}$};
\draw [->>,thick] (X9) -- (X10) node [midway,above] {$d_Y^{n}$};
\draw [->,thick] (X5) -- (X10) node [midway,right] {$g$};
\draw [->,thick,dashed] (X2) -- (X7) node [midway,left] {};
\draw [->,thick,dashed] (X4) -- (X9) node [midway,left] {};
\end{tikzpicture}
\end{center}
\end{itemize}
\end{definition}

An {\em$n$-exact category} is a pair $(\mathcal{M},\mathcal{X})$ where $\mathcal{M}$ is an additive category and $\mathcal{X}$ is an
$n$-exact structure on $\mathcal{M}$. If the class $\mathcal{X}$ is clear from the context, we identify $\mathcal{M}$ with the
pair $(\mathcal{M},\mathcal{X})$. The members of $\mathcal{X}$ are called {\em$\mathcal{X}$-admissible $n$-exact sequences}, or simply
{\em admissible $n$-exact sequences} when $\mathcal{X}$ is clear from the context. Furthermore, if
\begin{equation}
X^0 \overset{d^0}{\rightarrowtail} X^1 \overset{d^1}{\rightarrow} \cdots \overset{d^{n-1}}{\rightarrow} X^n \overset{d^n}{\twoheadrightarrow} X^{n+1} \notag
\end{equation}
is an admissible $n$-exact sequence, $d^0$ is called {\em admissible monomorphism} and $d^n$ is called
{\em admissible epimorphism}.

Let $\mathcal{A}$ be an additive category and $\mathcal{B}$ be a full subcategory of $\mathcal{A}$. $\mathcal{B}$ is called
{\em covariantly finite} in $\mathcal{A}$ if for every $A\in \mathcal{A}$ there exists an object $B\in\mathcal{B}$ and a morphism
$f : A\rightarrow B$ such that, for all $B'\in\mathcal{B}$, the sequence of abelian groups $\Hom_{\mathcal{A}}(B, B')\rightarrow \Hom_{\mathcal{A}}(A, B')\rightarrow 0$ is exact. Such a morphism $f$ is called a {\em left $\mathcal{B}$-approximation} of $A$. The notions of {\em contravariantly
finite subcategory} of $\mathcal{A}$ and right {\em$\mathcal{B}$-approximation} are defined dually. A {\em functorially
finite subcategory} of $\mathcal{A}$ is a subcategory which is both covariantly and contravariantly finite
in $\mathcal{A}$ \cite[Page 113]{AR}.

\begin{definition}$($\cite[Definition 4.13]{J}$)$
Let $(\mathcal{E},\mathcal{X})$ be an exact category and $\mathcal{M}$ a subcategory of $\mathcal{E}$. $\mathcal{M}$ is called an {\em$n$-cluster tilting subcategory} of $(\mathcal{E},\mathcal{X})$ if the following conditions are satisfied.
\begin{itemize}
\item[$(i)$]
Every object $E\in \mathcal{E}$ has a left $\mathcal{M}$-approximation by an $\mathcal{X}$-admissible monomorphism $E\rightarrowtail M$.
\item[$(ii)$]
Every object $E\in \mathcal{E}$ has a right $\mathcal{M}$-approximation by an $\mathcal{X}$-admissible epimorphism $M'\twoheadrightarrow E$.
\item[$(iii)$]
We have
\begin{align}
\mathcal{M}& = \{ E\in \mathcal{E} \mid \forall i\in \{1, \ldots, n-1 \}, \Ext_{\mathcal{E}}^i(E,\mathcal{M})=0 \}\notag \\
& =\{ E\in \mathcal{E} \mid \forall i\in \{1, \ldots, n-1 \}, \Ext_{\mathcal{E}}^i(\mathcal{M},E)=0 \}.\notag
\end{align}
\end{itemize}
We call $\mathcal{M}$ an {\em$n\mathbb{Z}$-cluster tilting subcategory} of $(\mathcal{E},\mathcal{X})$ if the following additional condition is satisfied:
\begin{itemize}
\item[$(iv)$]
If $\Ext^k(\mathcal{M},\mathcal{M})\neq 0$ then $k\in n\mathbb{Z}$.
\end{itemize}
An $n$-cluster tilting subcategory of abelian category $\mathcal{E}$ is $n$-cluster tilting subcategory of the exact category $\mathcal{E}$ with the exact structure of all short exact sequences.

Note that $\mathcal{E}$ itself is the unique $1$-cluster tilting subcategory of $\mathcal{E}$.
\end{definition}

A full subcategory $\mathcal{M}$ of an exact or abelian category $\mathcal{E}$ is called {\em$n$-rigid}, if for every
two objects $M,N\in \mathcal{M}$ and for every $k\in \{1,\cdots, n - 1\}$, we have $\Ext_{\mathcal{E}}^i(\mathcal{M},\mathcal{M})=0$. Any
$n$-cluster tilting subcategory $\mathcal{M}$ of an exact category $\mathcal{E}$ is $n$-rigid.

The following theorem gives the main source of $n$-exact categories.

\begin{theorem}$($\cite[Theorem 4.14]{J}$)$\label{2.5}
Every $n$-cluster tilting subcategory of an abelian (resp. exact) category inherit natural structure of an $n$-abelian (resp. $n$-exact) category.
\end{theorem}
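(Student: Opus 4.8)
The plan is to verify Jasso's axioms directly. I would treat the abelian case first, checking the four defining axioms of an $n$-abelian category, and then adapt the same constructions to the exact case by replacing short exact sequences with conflations of $\mathcal{E}$. The single structural input I will lean on throughout is the $n$-rigidity built into the definition, namely $\Ext_{\mathcal{E}}^k(\mathcal{M},\mathcal{M})=0$ for $1\le k\le n-1$, together with the two-sided characterisation $\mathcal{M}=\{E\mid \Ext^i(E,\mathcal{M})=0\}=\{E\mid \Ext^i(\mathcal{M},E)=0\}$ for $1\le i\le n-1$ and the long exact $\Ext$-sequences of the ambient category. Axiom (i) is quick: the characterisation shows $\mathcal{M}$ is closed under direct summands, since a summand $A$ of $M\in\mathcal{M}$ has $\Ext^i(A,N)$ a summand of $\Ext^i(M,N)=0$ and dually, so $A\in\mathcal{M}$; a full additive subcategory closed under summands inside an idempotent complete category (such as an abelian $\mathcal{E}$) is again idempotent complete.

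For axiom (ii) I would construct $n$-cokernels as follows. Given $d^0\colon X^0\to X^1$ in $\mathcal{M}$, form $C=\operatorname{coker}(d^0)$ in $\mathcal{E}$ and coresolve $C$ by objects of $\mathcal{M}$, using that every object admits a left $\mathcal{M}$-approximation which is a (admissible) monomorphism. The crucial point is that this coresolution terminates in $\mathcal{M}$ after $n-1$ steps: the approximation property forces the $\Ext^1(-,\mathcal{M})$ of each new cosyzygy to vanish, and dimension shifting propagates the vanishing, so the $(n-1)$-st cosyzygy has $\Ext^i(-,\mathcal{M})=0$ for all $1\le i\le n-1$ and hence lies in $\mathcal{M}$. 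Splicing $X^1\twoheadrightarrow C\rightarrowtail M^0\to\cdots\to M^{n-1}$ then gives the desired $n$-cokernel, the weak-cokernel and cokernel conditions following by testing $\Hom(-,Y)$ against $Y\in\mathcal{M}$ and chasing the resulting long exact $\Ext$-sequences. The $n$-kernel is dual, using right $\mathcal{M}$-approximations (admissible epimorphisms) to build a length-$(n-1)$ resolution.

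Axioms (iii)/(iv) I expect to be the main obstacle. Suppose $d^0$ is moreover a monomorphism with $n$-cokernel $(d^1,\dots,d^n)$. The key observation is that, for $d^0$ mono, the complex $0\to X^0\xrightarrow{d^0}X^1\to M^0\to\cdots\to M^{n-1}\to 0$ constructed above is genuinely exact in $\mathcal{E}$ with all terms in $\mathcal{M}$: monicity identifies $\ker(X^1\to C)$ with the image of $X^0$, and the coresolution is exact. Since $n$-cokernels are unique up to isomorphism of complexes, it suffices to check $n$-exactness for this representative. It is a right $n$-exact sequence by construction, so the content is left $n$-exactness, i.e.\ that applying $\Hom(Y,-)$ for $Y\in\mathcal{M}$ yields an exact sequence. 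Breaking the exact complex into conflations $0\to C^{i-1}\to X^i\to C^i\to 0$ and dimension-shifting, each required vanishing reduces to $\Ext^k(Y,X^0)=0$ with $1\le k\le n-1$, which holds by $n$-rigidity; the shifts stay in this range precisely because the sequence has length $n$. Axiom (iv) is the dual statement.

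Finally, for the exact case I would take $\mathcal{X}$ to be the class of $n$-exact sequences in $\mathcal{M}$ that are acyclic complexes in $\mathcal{E}$ obtained by splicing conflations of $\mathcal{E}$, equivalently those with admissible differentials. Closure under weak isomorphism and $(E0)$ are immediate, while $(E1)$ and $(E1^{op})$ follow from composability of admissible monomorphisms and epimorphisms in $\mathcal{E}$, with $\mathcal{M}$-approximations used to keep the spliced objects inside $\mathcal{M}$. For $(E2)$ and $(E2^{op})$ I would form the pushout (resp.\ pullback) conflation in $\mathcal{E}$ along the given morphism and then push the result into $\mathcal{M}$ via an $\mathcal{M}$-approximation; verifying that the resulting mapping cone is the required one-sided $n$-exact sequence and that $d_Y^0$ (resp.\ $d_X^n$) is admissible is the exact-category analogue of the obstacle above, handled by the same conflation-splitting and long-exact-sequence chase governed by $n$-rigidity.
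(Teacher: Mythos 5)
Your proposal is correct and takes essentially the same route as the proof the paper cites (Jasso's Theorems 3.16 and 4.14): idempotent completeness via summand-closure from the $\Ext$-characterisation, $n$-(co)kernels built by (co)resolving the (co)syzygy through $\mathcal{M}$-approximations with the dimension shift showing the $(n-1)$-st cosyzygy lands in $\mathcal{M}$, axioms (iii)/(iv) by splitting the acyclic representative into conflations and chasing long exact sequences governed by $n$-rigidity, and in the exact case the class of spliced conflations with pushout/pullback-plus-approximation for $(E2)$ and $(E2^{op})$. One immaterial slip: $n$-cokernels are unique only up to homotopy equivalence, not up to isomorphism of complexes, but your reduction to the constructed representative survives, since $\Hom(Y,-)$ carries the homotopy equivalence to a homotopy equivalence of complexes of abelian groups, whose homology --- hence the left $n$-exactness being tested --- is invariant.
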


\begin{theorem}$($\cite{EN, Kv}$)$\label{2.6}
Every small $n$-abelian category is equivalent to an $n$-cluster tilting subcategory f an abelian category.
\end{theorem}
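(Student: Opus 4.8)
The plan is to realise $\mathcal{M}$ inside a functor category and then cut it down by a Serre quotient, following the \emph{higher Auslander formula} philosophy; for $n=1$ this specialises to Auslander's classical formula $\mathcal{A}\simeq\modd(\mathcal{A})/\eff(\mathcal{A})$, which is the right sanity check since an abelian category is its own $1$-cluster tilting subcategory. Both references construct the ambient abelian category out of $\mathcal{M}$-modules, and I would do the same.

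First I would form the category $\Mod\mathcal{M}$ of additive functors $\mathcal{M}^{\mathrm{op}}\to\mathrm{Ab}$ and its full subcategory $\modd\mathcal{M}$ of finitely presented functors, together with the Yoneda embedding $Y\colon\mathcal{M}\to\modd\mathcal{M}$, $M\mapsto\Hom_{\mathcal{M}}(-,M)$, which is fully faithful with image the projective objects. Since $\mathcal{M}$ is $n$-abelian, every morphism has an $n$-kernel, and the penultimate map of an $n$-kernel is a weak kernel; hence $\mathcal{M}$ has weak kernels and $\modd\mathcal{M}$ is abelian. Next, for each $n$-exact sequence $\mathbb{X}\colon X^{0}\to\cdots\to X^{n+1}$ in $\mathcal{M}$, the $n$-kernel condition makes
\[
0\to YX^{0}\to YX^{1}\to\cdots\to YX^{n}\to YX^{n+1}
\]
exact in $\modd\mathcal{M}$, so the only failure of exactness is at the right-hand end. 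I would record the \emph{defect} $D_{\mathbb{X}}:=\mathrm{coker}(YX^{n}\to YX^{n+1})$, which then admits the length-$(n+1)$ projective resolution
\[
0\to YX^{0}\to YX^{1}\to\cdots\to YX^{n+1}\to D_{\mathbb{X}}\to 0,
\]
so that $\pd D_{\mathbb{X}}\le n+1$. I would then show that the class of such defects forms a Serre subcategory $\eff(\mathcal{M})$ of $\modd\mathcal{M}$ and set $\mathcal{A}:=\modd(\mathcal{M})/\eff(\mathcal{M})$, a small abelian category with quotient functor $\pi$.

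It remains to verify that $\pi Y\mathcal{M}\subseteq\mathcal{A}$ is an $n$-cluster tilting subcategory equivalent to $\mathcal{M}$. Full faithfulness of $\pi Y$ and the vanishing $\Ext^{i}_{\mathcal{A}}(\pi YM,\pi YM')=0$ for $1\le i\le n-1$ I would obtain by comparing the $\Hom$ and $\Ext$ groups of the Serre quotient with the length-$(n+1)$ resolutions of objects of $\eff(\mathcal{M})$: because the defects have projective dimension $n+1$, localising creates no extensions between representables in degrees $1,\dots,n-1$. The approximation axioms follow because every finitely presented functor receives an epimorphism from a representable, and, applying the $n$-cokernel (resp.\ $n$-kernel) of the corresponding morphism in $\mathcal{M}$ together with axioms (iii),(iv) of an $n$-abelian category, every object of $\mathcal{A}$ admits left and right $\mathcal{M}$-approximations by admissible monomorphisms and epimorphisms. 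Finally, the two equalities $\mathcal{M}=\{E:\Ext^{i}_{\mathcal{A}}(E,\mathcal{M})=0,\ 1\le i\le n-1\}=\{E:\Ext^{i}_{\mathcal{A}}(\mathcal{M},E)=0,\ 1\le i\le n-1\}$ I would prove by resolving an arbitrary $E\in\mathcal{A}$ through representables and reading off that the required vanishing forces the defining $n$-exactness, whence $E$ lies in $\pi Y\mathcal{M}$.

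The main obstacle will be the step concerning $\eff(\mathcal{M})$: proving that the defects form a genuine \emph{Serre} subcategory and that the Serre quotient computes $\Ext^{\bullet}_{\mathcal{A}}$ as expected. Closure under subobjects, quotients and extensions is precisely where the full strength of the $n$-abelian axioms (not merely $n$-rigidity) enters, and controlling the extension groups through the localisation—so as to force both the vanishing in degrees $1,\dots,n-1$ and the exact characterisation of $\mathcal{M}$ as the Ext-orthogonal of itself—is the technical heart of the argument.
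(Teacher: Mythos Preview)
The paper does not supply a proof of Theorem~\ref{2.6}; it is quoted from \cite{EN,Kv}, and the ambient abelian category is identified only later (in the corollary after Definition~\ref{5.2}) as $\modd(\mathcal{M})/\eff(\mathcal{M})$. Your outline is precisely this construction and follows the strategy of the cited references, so there is nothing to compare against in the present paper beyond noting that your choice of ambient category agrees with the one the authors invoke.
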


The first author in \cite{E} showed that there are $n$-exact categories which are not $n$-cluster tilting subcategories. However there is a nice embedding of small $n$-exact categories in abelian categories that we recall it in the next section.

%%%%%%%%%%%%%%%%%%%%%%%%%%%%%%%%%%%%%%

\section{Gabriel-Quillen embedding for $n$-exact categories}
Let $\mathcal{M}$ be a small $n$-exact category. In this section we first recall the Gabriel-Quillen embedding theorem \cite[Theorem]{E}. Then we show that it detects $n$-exact sequences and its essential image is closed under $n$-extensions, up to Yoneda equivalence. This allows us to compute the group of $n$-extensions introduced in \cite{Lu} in a Grothendieck category.

Let $\mathcal{M}$ be a small $n$-exact category. Recall that $\rm{Mod}(\mathcal{M})$ is
the category of all additive contravariant functors from $\mathcal{M}$ to the category of all abelian
groups. It is an abelian category with all limits and colimits, which are defined point-wise. Also by Yoneda's lemma, representable functors are projective and the direct sum of all representable functors $\Sigma_{X\in \mathcal{M}}\Hom(-,X)$, is a generator for $\rm{Mod}(\mathcal{M})$. Thus $\rm{Mod}(\mathcal{M})$ is a Grothendieck category \cite[Proposition 5.21]{Fr}.

A functor $F\in \rm{Mod}(\mathcal{M})$ is called {\em weakly effaceable}, if for each object $X\in \mathcal{M}$ and $x\in F(X)$
there exists an admissible epimorphism $f : Y \rightarrow X$ such that $F(f)(x) = 0$. We denote by
$\rm Eff(\mathcal{M})$ the full subcategory of all weakly effaceable functors. For each $k\in \{1,\cdots, n\}$ we denote by
$\mathcal{L}_k(\mathcal{M})$ the full subcategory of $\rm{Mod}(\mathcal{M})$ consist of all functors like $F$ such that for every $n$-exact sequence
\begin{equation}
X^0 \overset{}{\rightarrowtail} X^1 \overset{}{\rightarrow} \cdots \overset{}{\rightarrow} X^n \overset{}{\twoheadrightarrow} X^{n+1} \notag
\end{equation}
the sequence of abelian groups
\begin{equation}
0 \overset{}{\rightarrow} F(X^{n+1}) \overset{}{\rightarrow}F(X^n)\rightarrow \cdots \overset{}{\rightarrow} F(X^{n-k})\notag
\end{equation}
is exact. Also for a Serre subcategory $\mathcal{C}$ of an abelian category $\mathcal{A}$ we set $\mathcal{C}^{{\bot}_k} = \{A \in \mathcal{A} | \Ext_{\mathcal{A}}^{0,...,k}(\mathcal{C}, A) = 0\}$. In particular $\mathcal{C}^{{\bot}_1}=\mathcal{C}^{\bot}$ is the subcategory of $\mathcal{C}$-closed objects \cite[Page 39]{KrB}.
In the following proposition we collect the basic properties of the Gabriel-Quillen embedding theorem.

\begin{proposition}$($\cite[Section 3]{E}$)$\label{3.1}
\begin{itemize}
\item[(i)]
$\rm Eff(\mathcal{M})$ is a localising subcategory of $\rm{Mod}(\mathcal{M})$.
\item[(ii)]
$\rm Eff(\mathcal{M})^{\bot}=\mathcal{L}_1(\mathcal{M})$.
\item[(iii)]
For every $k\in \{1,\cdots, n\}$, $\rm Eff(\mathcal{M})^{\bot_k}=\mathcal{L}_k(\mathcal{M})$.
\end{itemize}
Denote by $H:\mathcal{M}\rightarrow \mathcal{L}_1(\mathcal{M})$ the composition of the Yoneda functor $\mathcal{M}\rightarrow \Mod(\mathcal{M})$ with the localization functor $\Mod(\mathcal{M})\rightarrow \frac{\Mod(\mathcal{M})}{\rm{Eff}(\mathcal{M})}\simeq \rm{Eff}(\mathcal{M})^{\bot}=\mathcal{L}_1(\mathcal{M})$. Thus $H(X)=(-,X):\mathcal{M}^{op}\rightarrow \rm Ab$. For simplicity we denote $(-,X)$ by $H_X$. Then
\begin{itemize}
\item[(i)]
For every $n$-exact sequence $X^0\rightarrowtail X^1\rightarrow \cdots\rightarrow X^n\twoheadrightarrow X^{n+1}$ in $\mathcal{M}$,
\begin{center}
$0\rightarrow H_{X^0}\rightarrow H_{X^1}\rightarrow \cdots\rightarrow H_{X^{n}}\rightarrow H_{X^{n+1}}\rightarrow 0$
\end{center}
is exact in $\mathcal{L}_1(\mathcal{M})$.
\item[(ii)]
The essential image of $H:\mathcal{M}\rightarrow \mathcal{L}_1(\mathcal{M})$ is $n$-rigid.
\end{itemize}
\end{proposition}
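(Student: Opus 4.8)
The heart of the statement is the Gabriel--Quillen package \((i)$--$(iii)\), which I would simply \emph{recall} from \cite[Section 3]{E}: $\Eff(\mathcal{M})$ is localising in the Grothendieck category $\Mod(\mathcal{M})$, its right perpendicular is $\mathcal{L}_1(\mathcal{M})$, and more precisely $\Eff(\mathcal{M})^{\bot_k}=\mathcal{L}_k(\mathcal{M})$ for $1\le k\le n$. The two assertions about $H$ are then consequences. The key preliminary observation is that a representable functor $H_X=\Hom_{\mathcal{M}}(-,X)$ already lies in $\mathcal{L}_n(\mathcal{M})\subseteq\mathcal{L}_1(\mathcal{M})$: for any $n$-exact sequence the right $n$-exactness of $X^\bullet$ (i.e.\ the $n$-cokernel property of $(d^1,\dots,d^n)$) says exactly that
\[
0\to \Hom(X^{n+1},X)\to \Hom(X^n,X)\to\cdots\to \Hom(X^0,X)
\]
is exact, which is the defining condition for membership in $\mathcal{L}_n(\mathcal{M})$. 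Hence each $H_X$ is already a local object, the localization functor $a\colon\Mod(\mathcal{M})\to\mathcal{L}_1(\mathcal{M})$ fixes it, and $H(X)=H_X$ as asserted.

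For item (i), I would apply the Yoneda functor to the given $n$-exact sequence to obtain the complex $0\to H_{X^0}\to\cdots\to H_{X^{n+1}}\to 0$ in $\Mod(\mathcal{M})$. Evaluating at an arbitrary $Y\in\mathcal{M}$, the left $n$-exactness of $X^\bullet$ (the $n$-kernel property) makes the resulting complex of abelian groups exact in every degree except possibly the last, so the only nonzero cohomology functor in $\Mod(\mathcal{M})$ is the cokernel $C:=\Hom... $ of $H_{X^n}\to H_{X^{n+1}}$. I then claim $C\in\Eff(\mathcal{M})$: given a class in $C(Y)$ represented by $y\colon Y\to X^{n+1}$, the $n$-pullback axiom $(E2^{op})$ applied to the admissible epimorphism $d^n$ and to $y$ produces an admissible epimorphism $Z\twoheadrightarrow Y$ together with a map $Z\to X^n$ whose composite with $d^n$ is the restriction of $y$; thus the class of $y$ dies in $C(Z)$. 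Since $a$ is exact and annihilates $\Eff(\mathcal{M})$, the localized complex is exact in $\mathcal{L}_1(\mathcal{M})$.

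For item (ii) I must show $\Ext^i_{\mathcal{L}_1(\mathcal{M})}(H_X,H_Y)=0$ for $1\le i\le n-1$. Two inputs feed in: $H_X$ is projective in $\Mod(\mathcal{M})$ by Yoneda, while $H_Y\in\mathcal{L}_n(\mathcal{M})=\Eff(\mathcal{M})^{\bot_n}$ by (iii) and the observation above, so $\Ext^j_{\Mod(\mathcal{M})}(E,H_Y)=0$ for all $E\in\Eff(\mathcal{M})$ and $0\le j\le n$. Writing $\iota\colon\mathcal{L}_1(\mathcal{M})\hookrightarrow\Mod(\mathcal{M})$ for the inclusion, which is right adjoint to the exact functor $a$ and hence preserves injectives, the adjunction $\Hom_{\mathcal{L}_1}(aM,H_Y)=\Hom_{\Mod}(M,\iota H_Y)$ gives a Grothendieck spectral sequence
\[
E_2^{p,q}=\Ext^p_{\Mod(\mathcal{M})}\bigl(H_X,R^q\iota(H_Y)\bigr)\ \Longrightarrow\ \Ext^{p+q}_{\mathcal{L}_1(\mathcal{M})}(H_X,H_Y).
\]
Projectivity of $H_X$ kills every column with $p\ge 1$, so the sequence collapses to the edge isomorphism $\Ext^i_{\mathcal{L}_1}(H_X,H_Y)\cong\bigl(R^i\iota(H_Y)\bigr)(X)$, reducing everything to the vanishing of the derived sections $R^i\iota(H_Y)$ in the range $1\le i\le n-1$.

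This last vanishing is the main obstacle, and I would handle it torsion-theoretically. Comparing an injective resolution of $H_Y$ in $\Mod(\mathcal{M})$ with its image under $a$, and using that injectives split as torsion $\oplus$ torsion-free for the hereditary torsion theory with torsion class $\Eff(\mathcal{M})$, one obtains $R^i\iota(H_Y)\cong R^{i+1}\Gamma(H_Y)$ for $i\ge 1$, where $\Gamma$ is the associated torsion radical. The local cohomology $R^\bullet\Gamma(H_Y)$ is in turn computed by the groups $\Ext^\bullet_{\Mod(\mathcal{M})}(S,H_Y)$ with $S\in\Eff(\mathcal{M})$, and the condition $H_Y\in\Eff(\mathcal{M})^{\bot_n}$ forces these to vanish for $0\le\bullet\le n$; hence $R^{i+1}\Gamma(H_Y)=0$ for $1\le i\le n-1$ and $n$-rigidity follows. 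The delicate point, which leans on the precise properties of the localization at $\Eff(\mathcal{M})$ from \cite[Section 3]{E} and the perpendicular calculus of \cite{KrB}, is exactly translating the $\Eff(\mathcal{M})^{\bot_n}$-condition into the vanishing of $R^i\iota(H_Y)$ in the correct range.
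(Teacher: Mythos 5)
The paper never proves this proposition: it is recalled wholesale from \cite[Section 3]{E}, so there is no internal argument to compare against, only the standard ones (which the paper re-uses later, e.g.\ in Proposition \ref{3.2} and Theorem \ref{3.5}). Your preliminary observation that $H_X\in\mathcal{L}_n(\mathcal{M})$ (the $n$-cokernel property is literally the defining exactness condition) and your proof of item (i) are correct and are exactly the expected arguments: the $n$-kernel property makes the evaluated complex exact except at the last spot, axiom $(E2^{op})$ shows the cokernel of $H_{X^n}\to H_{X^{n+1}}$ is weakly effaceable, and exactness of the quotient functor finishes; this is the same effaceable-cokernel device the paper itself uses in Proposition \ref{3.2}. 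Your reduction of item (ii) to the vanishing of $\bigl(R^i\iota(H_Y)\bigr)(X)$ for $1\le i\le n-1$ is also sound, though the spectral sequence is overkill: since $\Hom_{\Mod(\mathcal{M})}(H_X,-)$ is evaluation at $X$, hence exact, one has $R^i\bigl(\Hom(H_X,\iota(-))\bigr)\cong\bigl(R^i\iota(-)\bigr)(X)$ directly.

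The genuine gap is in your last paragraph. The splitting of injectives as torsion $\oplus$ torsion-free is \emph{not} a consequence of hereditariness; it is equivalent to \emph{stability} of the torsion pair (the torsion class being closed under injective envelopes), and nothing in the setup shows $\Eff(\mathcal{M})$ is stable. For a non-stable hereditary torsion theory $I/\Gamma(I)$ fails to be injective, need not be a closed object, and the identification $R^i\iota\cong R^{i+1}\Gamma$ for $i\ge 1$ is precisely what breaks. The vanishing you need is nevertheless true and follows by dimension shifting using hereditariness alone. Put $N:=H_Y\in\Eff(\mathcal{M})^{\bot_n}$ and let $N\hookrightarrow I$ be an injective envelope in $\Mod(\mathcal{M})$: since $N$ is torsion-free and essential in $I$, and subobjects of torsion objects are torsion, $I$ is torsion-free, hence closed and injective in $\mathcal{L}_1(\mathcal{M})$. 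With $N':=I/N$, the long exact sequence for $\Ext_{\Mod(\mathcal{M})}(E,-)$, $E\in\Eff(\mathcal{M})$, gives $N'\in\Eff(\mathcal{M})^{\bot_{n-1}}$; moreover a torsion-free object with $\Ext^1_{\Mod(\mathcal{M})}(\Eff(\mathcal{M}),-)=0$ is closed, because the unit $N'\to\iota a N'$ is then a split monomorphism with torsion cokernel inside a torsion-free object, so the cokernel vanishes. Applying $R^{\bullet}\iota$ to $0\to aN\to aI\to aN'\to 0$ and using that $N$, $I$, $N'$ are closed with $I\twoheadrightarrow N'$, you get $R^1\iota(aN)=0$ and $R^{i+1}\iota(aN)\cong R^i\iota(aN')$ for $i\ge 1$, whence by induction $R^i\iota(H_Y)=0$ for $1\le i\le n-1$, which is the asserted $n$-rigidity. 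This is in substance the perpendicular calculus of \cite{KrB} that you gesture at; your local-cohomology detour, as written, silently assumes a stability hypothesis you have not verified.
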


In the following proposition we prove that the canonical functor $H:\mathcal{M}\rightarrow \mathcal{L}_1(\mathcal{M})$ detects $n$-exact sequences.

\begin{proposition}\label{3.2}
Let $Y:Y^0\rightarrow Y^1\rightarrow \cdots\rightarrow Y^n\rightarrow Y^{n+1}$ be a complex of objects in $\mathcal{M}$ such that
\begin{equation}\label{det}
0\rightarrow H_{Y^0}\rightarrow H_{Y^1}\rightarrow \cdots\rightarrow H_{Y^{n}}\rightarrow H_{Y^{n+1}}\rightarrow 0
\end{equation}
is exact in $\mathcal{L}_1(\mathcal{M})$. Then $Y$ is an admissible $n$-exact sequence in $\mathcal{M}$.
\begin{proof}
Because the essential image of $H:\mathcal{M}\rightarrow \mathcal{L}_1(\mathcal{M})$ is $n$-rigid, by the similar argument as in the proof of \cite[Proposition 2.2]{JK} for each object $Z\in \mathcal{M}$ we have the following exact sequence of abelian groups.
\begin{equation}
0\rightarrow \Hom(H_Z,H_{Y^0})\rightarrow \Hom(H_Z,H_{Y^1})\rightarrow \cdots\rightarrow \Hom(H_Z,H_{Y^{n}})\rightarrow \Hom(H_Z,H_{Y^{n+1}}). \notag
\end{equation}
Thus by Yoneda's lemma $Y$ is a left $n$-exact sequence. Dually it is a right $n$-exact sequence, so it is an $n$-exact sequence. We need to show that $Y$ is an admissible $n$-exact sequence. The cokernel of $H_{Y^{n}}\rightarrow H_{Y^{n+1}}$, denoted by $C$, is weakly effaceable. In particular, there exist $X^n\in \mathcal{M}$ and an admissible epimorphism $X^n\twoheadrightarrow Y^{n+1}$ in $\mathcal{M}$, such that $C(Y^{n+1})\rightarrow C(X^n)$ carries the image of $1_{Y^{n+1}}$ to $0$. This means that
there is a commutative diagram with exact rows in $\mathcal{L}_1(\mathcal{M})$ of the following form for an admissible $n$-exact sequence $X:X^0\rightarrowtail X^1\rightarrow \cdots \rightarrow X^n\twoheadrightarrow Y^{n+1}$ in $\mathcal{M}$.
\begin{center}
\begin{tikzpicture}
%\node (X1) at (-8,1) {$0$};
%\node (X2) at (-6.5,1) {$X^0$};
\node (X3) at (-5,1) {$0$};
\node (X4) at (-3.5,1) {$H_{X^0}$};
\node (X5) at (-1.75,1) {$H_{X^1}$};
\node (X6) at (0,1) {$\cdots$};
\node (X7) at (1.5,1) {$H_{X^n}$};
\node (X8) at (3.5,1) {$H_{Y^{n+1}}$};
\node (X9) at (5,1) {$0$};
\node (X10) at (-5,-0.5) {$0$};
\node (X11) at (-3.5,-0.5) {$H_{Y^0}$};
\node (X12) at (-1.75,-0.5) {$H_{Y^1}$};
\node (X13) at (0,-0.5) {$\cdots$};
\node (X14) at (1.5,-0.5) {$H_{Y^n}$};
\node (X15) at (3.5,-0.5) {$H_{Y^{n+1}}$};
\node (X16) at (5,-0.5) {$0,$};
%\draw [->,thick] (X1) -- (X2) node [midway,left] {};
%\draw [->,thick] (X2) -- (X3) node [midway,left] {};
\draw [->,thick] (X3) -- (X4) node [midway,left] {};
\draw [->,thick] (X4) -- (X5) node [midway,left] {};
\draw [->,thick] (X5) -- (X6) node [midway,left] {};
\draw [->,thick] (X6) -- (X7) node [midway,above] {};
\draw [->,thick] (X7) -- (X8) node [midway,left] {};
\draw [->,thick] (X8) -- (X9) node [midway,above] {};
\draw [->,thick] (X10) -- (X11) node [midway,left] {};
\draw [->,thick] (X11) -- (X12) node [midway,above] {};
\draw [->,thick] (X12) -- (X13) node [midway,above] {};
\draw [->,thick] (X13) -- (X14) node [midway,above] {};
\draw [->,thick] (X14) -- (X15) node [midway,above] {};
\draw [->,thick] (X15) -- (X16) node [midway,above] {};
\draw [->,thick,dotted] (X4) -- (X11) node [midway,above] {};
\draw [->,thick,dotted] (X5) -- (X12) node [midway,above] {};
\draw [->,thick] (X7) -- (X14) node [midway,above] {};
\draw [double,-,thick] (X8) -- (X15) node [midway,above] {};
\end{tikzpicture}
\end{center}
where the dotted arrows are induced by the factorization property of $n$-kernel.
Because the top row is induced by an admissible $n$-exact sequence, by the dual of obscure axiom (\cite[Proposition 4.11 ]{J}) and Yoneda's lemma the bottom row is also induced by an admissible $n$-exact sequence.
\end{proof}
\end{proposition}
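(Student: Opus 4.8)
The plan is to argue in two stages: first that $Y$ is an $n$-exact sequence in the abstract sense (both a left and a right $n$-exact sequence), and then that it is \emph{admissible} for the ambient $n$-exact structure.

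For the first stage I would exploit the fact, recorded in Proposition \ref{3.1}, that the essential image of $H$ is $n$-rigid, so that $\Ext^i_{\mathcal{L}_1(\mathcal{M})}(H_Z,H_{Y^j})=0$ for every $Z\in\mathcal{M}$ and every $1\le i\le n-1$. Applying the left exact functor $\Hom_{\mathcal{L}_1(\mathcal{M})}(H_Z,-)$ to the hypothesised exact sequence \eqref{det} and splitting the latter into short exact pieces, a dimension-shifting argument (as in \cite[Proposition 2.2]{JK}) shows that
\[
0\to \Hom(H_Z,H_{Y^0})\to \Hom(H_Z,H_{Y^1})\to\cdots\to \Hom(H_Z,H_{Y^n})\to \Hom(H_Z,H_{Y^{n+1}})
\]
is exact. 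Since $H$ is fully faithful, Yoneda's lemma identifies $\Hom_{\mathcal{L}_1(\mathcal{M})}(H_Z,H_{Y^i})$ with $\Hom_{\mathcal{M}}(Z,Y^i)$, so $(d_Y^0,\dots,d_Y^{n-1})$ is an $n$-kernel of $d_Y^n$ and $Y$ is left $n$-exact. Running the dual computation with $\Hom_{\mathcal{L}_1(\mathcal{M})}(-,H_Z)$ and the identification $\Hom_{\mathcal{L}_1(\mathcal{M})}(H_{Y^i},H_Z)\cong\Hom_{\mathcal{M}}(Y^i,Z)$ shows that $Y$ is right $n$-exact, hence an $n$-exact sequence.

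For the second stage I would detect admissibility through the localization. Form the cokernel $C$ of $H_{Y^n}\to H_{Y^{n+1}}$ in $\Mod(\mathcal{M})$; because \eqref{det} is exact in the quotient $\mathcal{L}_1(\mathcal{M})\simeq\Mod(\mathcal{M})/\Eff(\mathcal{M})$, the functor $C$ is weakly effaceable. Applying the definition of weak effaceability to the image of $1_{Y^{n+1}}\in H_{Y^{n+1}}(Y^{n+1})$ produces an admissible epimorphism $X^n\twoheadrightarrow Y^{n+1}$ killing that element, which is exactly the assertion that $X^n\to Y^{n+1}$ factors through $d_Y^n$. Since admissible epimorphisms are the final maps of admissible $n$-exact sequences, I complete $X^n\twoheadrightarrow Y^{n+1}$ to an admissible $n$-exact sequence $X:X^0\rightarrowtail\cdots\to X^n\twoheadrightarrow Y^{n+1}$, to which Proposition \ref{3.1} associates an exact row $0\to H_{X^0}\to\cdots\to H_{X^n}\to H_{Y^{n+1}}\to 0$ in $\mathcal{L}_1(\mathcal{M})$. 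The factorization above gives a vertical map $H_{X^n}\to H_{Y^n}$, and together with the identity on $H_{Y^{n+1}}$ and the lifting property of $n$-kernels this fills in a commutative ladder of exact sequences in $\mathcal{L}_1(\mathcal{M})$ whose top row is induced by the admissible sequence $X$ and whose two rows agree on the last term. Finally I would invoke the dual of the obscure axiom \cite[Proposition 4.11]{J} to transfer admissibility from the top row to the bottom row, and read the conclusion back in $\mathcal{M}$ via Yoneda.

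I expect the main obstacle to lie in the dimension-shifting of the first stage: one must verify that $n$-rigidity, which only supplies vanishing of $\Ext^i$ in the degrees $1\le i\le n-1$, is precisely enough to propagate exactness across all $n+1$ of the relevant $\Hom$ terms, with the boundary degrees handled carefully. A secondary delicate point is pinning down the exact form of the (dual) obscure axiom needed so that the comparison ladder forces the bottom row to be \emph{admissible} rather than merely $n$-exact.
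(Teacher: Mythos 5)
Your proposal is correct and takes essentially the same route as the paper's own proof: $n$-rigidity of the essential image together with the argument of \cite[Proposition 2.2]{JK} and Yoneda's lemma to show $Y$ is a left (dually, right) $n$-exact sequence, then the weakly effaceable cokernel $C$ to produce an admissible epimorphism $X^n\twoheadrightarrow Y^{n+1}$ factoring through $d_Y^n$, the comparison ladder filled in by the $n$-kernel lifting property, and the dual of the obscure axiom \cite[Proposition 4.11]{J} to transfer admissibility to the bottom row. Your one refinement --- noting explicitly that $C$ is formed in $\Mod(\mathcal{M})$ and lies in $\Eff(\mathcal{M})$ because \eqref{det} is exact in the quotient --- merely makes precise a point the paper leaves implicit.
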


In the classical case, the Gabriel-Quillen embedding $\mathcal{E}\hookrightarrow \rm Lex(\mathcal{E})$ identifies $\mathcal{E}$ with an extension closed subcategory of the abelian category $\rm Lex(\mathcal{E})$. We prove a similar result for $n$-exact categories. First let recall some facts about Yoneda extension groups in exact categories.

Let $\mathcal{E}$ be an exact category, $k$ a positive integer and $A, C\in \mathcal{E}$. An exact sequence
\begin{equation}
 A\rightarrowtail X_{k-1}\rightarrow \cdots \rightarrow X_0\twoheadrightarrow C \notag
\end{equation}
in $\mathcal{E}$ is called a {\em $k$-fold extension of $C$ by $A$}. Two $k$-fold extensions of $C$ by $A$,
\begin{equation}
\xi: A\rightarrowtail B_{k-1}\rightarrow \cdots \rightarrow B_0\twoheadrightarrow C\notag
\end{equation}
and
\begin{equation}
\xi': A\rightarrowtail B'_{k-1}\rightarrow \cdots \rightarrow B'_0\twoheadrightarrow C \notag
\end{equation}
are said to be {\em Yoneda equivalent} if there is a chain of $k$-fold extensions of $C$ by $A$
\begin{equation}
\xi=\xi_0,  \xi_1, \ldots ,\xi_{l-1}, \xi_l=\xi' \notag
\end{equation}
such that for every $i\in \{0,\cdots,l-1\}$, we have either a chain map $\xi_i\rightarrow \xi_{i+1}$ starting with $1_A$ and ending with $1_C$, or a chain map $\xi_{i+1}\rightarrow \xi_{i}$ starting with $1_A$ and ending with $1_C$.
$\Ext_{\mathcal{E}}^k(C,A)$ is defined as the set of Yoneda equivalence classes of $k$-fold extensions of $C$ by $A$ \cite{Mac, Mi}.

\begin{remark}\label{3.3}
let $\mathcal{E}$ be an exact category and $X\rightarrowtail Y\twoheadrightarrow Z$ be a conflation in $\mathcal{E}$. The for every object $A\in \mathcal{E}$, in the long exact sequence
\begin{align*}
0&\rightarrow \Hom_{\mathcal{E}}(A,X)\rightarrow \Hom_{\mathcal{E}}(A,Y)\rightarrow \Hom_{\mathcal{E}}(A,Z)\\
&\rightarrow \Ext^1_{\mathcal{E}}(A,X)\rightarrow \Ext^1_{\mathcal{E}}(A,Y)\rightarrow \Ext^1_{\mathcal{E}}(A,Z)\\
&\rightarrow \cdots
\end{align*}
for every $i\geq 1$, the connecting morphism $\Ext^i_{\mathcal{E}}(A,Z)\rightarrow\Ext^{i+1}_{\mathcal{E}}(A,X)$ is given by splicing an $i$-fold extension
\begin{center}
$Z\rightarrowtail E^1\rightarrow E^2\rightarrow\cdots\rightarrow E^i\twoheadrightarrow A$
\end{center}
with the short exact sequence $X\rightarrowtail Y\twoheadrightarrow Z$ and obtaining the following $(i+1)$-fold extension.
\begin{center}
$X\rightarrowtail Y\rightarrow E^1\rightarrow E^2\rightarrow\cdots\rightarrow E^i\twoheadrightarrow A.$
\end{center}
\end{remark}

\begin{remark}\label{3.4}

Let $ K\rightarrowtail B_0\twoheadrightarrow C$ and $ A\rightarrowtail B_1'\twoheadrightarrow K'$ be two conflation in an exact category $\mathcal{E}$ and $\sigma:K\rightarrow K'$ be a morphism. Then by taking pullback and pushout along $\sigma$, we have the following commutative diagram with exact rows.
\begin{center}
\begin{tikzpicture}
%\node (X1) at (-4.,1) {$0$};
\node (X2) at (-2.5,1) {$A$};
\node (X3) at (-1,1) {$W_1$};
\node (X4) at (0,0) {$K$};
\node (X5) at (1,1) {$B_0$};
\node (X6) at (2.5,1) {$C$};
%\node (X7) at (4,1) {$0$};
%\node (X8) at (-4,-1) {$0$};
\node (X9) at (-2.5,-1) {$A$};
\node (X10) at (-1,-1) {$B'_1$};
\node (X11) at (0,-2) {$K'$};
\node (X12) at (1,-1) {$W_0$};
\node (X13) at (2.5,-1) {$C$};
%\node (X14) at (4,-1) {$0$};
%\draw [->,thick] (X1) -- (X2) node [midway,left] {};
\draw [>->,thick] (X2) -- (X3) node [midway,left] {};
\draw [->,thick] (X3) -- (X4) node [midway,left] {};
\draw [->,thick] (X3) -- (X5) node [midway,left] {};
\draw [>->,thick] (X4) -- (X5) node [midway,left] {};
\draw [->>,thick] (X5) -- (X6) node [midway,left] {};
%\draw [->,thick] (X6) -- (X7) node [midway,above] {};
%\draw [->,thick] (X8) -- (X9) node [midway,above] {};
\draw [>->,thick] (X9) -- (X10) node [midway,above] {};
\draw [->>,thick] (X10) -- (X11) node [midway,above] {};
\draw [->,thick] (X10) -- (X12) node [midway,left] {};
\draw [->,thick] (X11) -- (X12) node [midway,above] {};
\draw [->>,thick] (X12) -- (X13) node [midway,above] {};
%\draw [->,thick] (X13) -- (X14) node [midway,above] {};
\draw [double,-,thick] (X2) -- (X9) node [midway,above] {};
\draw [->,thick] (X3) -- (X10) node [midway,above] {};
\draw [->,thick] (X4) -- (X11) node [midway,left] {};
\draw [->,thick] (X5) -- (X12) node [midway,above] {};
\draw [double,-,thick] (X6) -- (X13) node [midway,above] {};
\end{tikzpicture}
\end{center}
These two rows are Yoneda equivalent in $\Ext_{\mathcal{E}}^2(C,A)$. Now consider the following commutative diagram with exact rows
\begin{center}
\begin{tikzpicture}
%\node (X1) at (-4.5,0.5) {$0$};
%\node (X2) at (-3,0.5) {$0$};
%\node (X3) at (-1.5,0.5) {$X^0$};
%\node (X4) at (0,0.5) {$0$};
\node (X5) at (1.5,0.5) {$X^0$};
\node (X6) at (3,0.5) {$X^1$};
\node (X7) at (4.5,0.5) {$X^2$};
\node (X8) at (6,0.5) {$X^3$};
%\node (X9) at (7.5,0.5) {$0$};
%\node (X10) at (-5,-1) {$0$};
%\node (X11) at (0,-1) {$\xi:0$};
\node (X12) at (1.5,-1) {$\xi:Y^0$};
\node (X13) at (3,-1) {$Y^1$};
\node (X14) at (4.5,-1) {$Y^2$};
\node (X15) at (6,-1) {$X^3.$};
%\node (X16) at (7.5,-1) {$0.$};
%\draw [->,thick] (X1) -- (X2) node [midway,left] {};
%\draw [->,thick] (X2) -- (X3) node [midway,left] {};
%\draw [->,thick] (X3) -- (X4) node [midway,left] {};
%\draw [->,thick] (X4) -- (X5) node [midway,left] {};
\draw [>->,thick] (X5) -- (X6) node [midway,left] {};
\draw [->,thick] (X6) -- (X7) node [midway,above] {};
\draw [->>,thick] (X7) -- (X8) node [midway,above] {$f^2$};
%\draw [->,thick] (X8) -- (X9) node [midway,above] {};
%\draw [->,thick] (X10) -- (X11) node [midway,left] {};
%\draw [->,thick] (X11) -- (X12) node [midway,above] {};
\draw [>->,thick] (X12) -- (X13) node [midway,above] {};
\draw [->,thick] (X13) -- (X14) node [midway,above] {};
\draw [->>,thick] (X14) -- (X15) node [midway,above] {$g^2$};
%\draw [->,thick] (X15) -- (X16) node [midway,above] {};
\draw [->,thick] (X7) -- (X14) node [midway,above] {};
\draw [double,-,thick] (X8) -- (X15) node [midway,above] {};
\end{tikzpicture}
\end{center}
By taking a pullback
\begin{center}
\begin{tikzpicture}
\node (X1) at (0,0) {$\Ker (g^2)$};
\node (X2) at (0,1.5) {$\Ker (f^2)$};
\node (X3) at (-2,0) {$Y^1$};
\node (X4) at (-2,1.5) {$Y_{pb}^{1}$};
\draw [->,thick] (X2) -- (X1) node [midway,left] {};
\draw [->,thick] (X3) -- (X1) node [midway,left] {};
\draw [->,dashed] (X4) -- (X2) node [midway,right] {};
\draw [->,dashed] (X4) -- (X3) node [midway,left] {};
\end{tikzpicture}
\end{center}
we obtain the following commutative diagram with exact rows, where the bottom row is Yoneda equivalent to $\xi$.
\begin{center}
\begin{tikzpicture}
%\node (X1) at (-4.5,0.5) {$0$};
%\node (X2) at (-3,0.5) {$0$};
%\node (X3) at (-1.5,0.5) {$X^0$};
%\node (X4) at (0,0.5) {$0$};
\node (X5) at (1.5,0.5) {$X^0$};
\node (X6) at (3,0.5) {$X^1$};
\node (X7) at (4.5,0.5) {$X^2$};
\node (X8) at (6,0.5) {$X^3$};
%\node (X9) at (7.5,0.5) {$0$};
%\node (X10) at (-5,-1) {$0$};
%\node (X11) at (0,-1) {$0$};
\node (X12) at (1.5,-1) {$Y^0$};
\node (X13) at (3,-1) {$Y_{pb}^1$};
\node (X14) at (4.5,-1) {$X^2$};
\node (X15) at (6,-1) {$X^3.$};
%\node (X16) at (7.5,-1) {$0.$};
%\draw [->,thick] (X1) -- (X2) node [midway,left] {};
%\draw [->,thick] (X2) -- (X3) node [midway,left] {};
%\draw [->,thick] (X3) -- (X4) node [midway,left] {};
%\draw [->,thick] (X4) -- (X5) node [midway,left] {};
\draw [>->,thick] (X5) -- (X6) node [midway,left] {};
\draw [->,thick] (X6) -- (X7) node [midway,above] {};
\draw [->>,thick] (X7) -- (X8) node [midway,above] {};
%\draw [->,thick] (X8) -- (X9) node [midway,above] {};
%\draw [->,thick] (X10) -- (X11) node [midway,left] {};
%\draw [->,thick] (X11) -- (X12) node [midway,above] {};
\draw [>->,thick] (X12) -- (X13) node [midway,above] {};
\draw [->,thick] (X13) -- (X14) node [midway,above] {};
\draw [->>,thick] (X14) -- (X15) node [midway,above] {};
%\draw [->,thick] (X15) -- (X16) node [midway,above] {};
\draw [double,-,thick] (X7) -- (X14) node [midway,above] {};
\draw [double,-,thick] (X8) -- (X15) node [midway,above] {};
\end{tikzpicture}
\end{center}
In the proof of the following proposition we use this argument several times.
\end{remark}

\begin{theorem}\label{3.5}
Let $\mathcal{M}$ be a small $n$-exact category. The embedding $\mathcal{M}\hookrightarrow \mathcal{L}_1(\mathcal{M})$ is closed under $n$-extensions, up to Yoneda equivalence.
\begin{proof} Let $k\in \{1,\ldots,n-1\}$, $E^0, E^{n+1}\in \mathcal{L}_1(\mathcal{M})$ and
\begin{center}
$\xi: 0\rightarrow E^0\rightarrow E^1\rightarrow \cdots \rightarrow E^n\rightarrow E^{n+1}\rightarrow 0$
\end{center}
be a $k$-fold extension of $E^{n+1}$ by $E^0$ in $\mathcal{L}_1(\mathcal{M})$. There exist $Y,X^{n+1}\in \mathcal{M}$ such that $H(Y)=E^0$ and $H(X^{n+1})=E^{n+1}$. Then the cokernel of $E^n\rightarrow H(X^{n+1})$, denoted by $C$, is effaceable. In particular, there exist $X^n\in \mathcal{M}$ and an admissible epimorphism $X^n\twoheadrightarrow X^{n+1}$ in $\mathcal{M}$, such that $C(X^{n+1})\rightarrow C(X^n)$ carries the image of $1_{X^{n+1}}$ to $0$. This means that
there is a commutative diagram with exact rows in $\mathcal{L}_1(\mathcal{M})$ of the following form for an admissible $n$-exact sequence $X:X^0\rightarrowtail X^1\rightarrow \cdots \rightarrow X^n\twoheadrightarrow X^{n+1}$ in $\mathcal{M}$.
\begin{center}
\begin{tikzpicture}
%\node (X1) at (-8,1) {$0$};
%\node (X2) at (-6.5,1) {$X^0$};
\node (X3) at (-5,1) {$0$};
\node (X4) at (-3.5,1) {$H(X^0)$};
\node (X5) at (-1.5,1) {$H(X^1)$};
\node (X6) at (.5,1) {$\cdots$};
\node (X7) at (2.5,1) {$H(X^n)$};
\node (X8) at (5,1) {$H(X^{n+1})$};
\node (X9) at (7,1) {$0$};
\node (X10) at (-5,-0.5) {$0$};
\node (X11) at (-3.5,-0.5) {$H(Y)$};
\node (X12) at (-1.5,-0.5) {$E^1$};
\node (X13) at (.5,-0.5) {$\cdots$};
\node (X14) at (2.5,-0.5) {$E^n$};
\node (X15) at (5,-0.5) {$H(X^{n+1})$};
\node (X16) at (7,-0.5) {$0.$};
%\draw [->,thick] (X1) -- (X2) node [midway,left] {};
%\draw [->,thick] (X2) -- (X3) node [midway,left] {};
\draw [->,thick] (X3) -- (X4) node [midway,left] {};
\draw [->,thick] (X4) -- (X5) node [midway,above] {$f^0$};
\draw [->,thick] (X5) -- (X6) node [midway,above] {$f^1$};
\draw [->,thick] (X6) -- (X7) node [midway,above] {$f^{n-1}$};
\draw [->,thick] (X7) -- (X8) node [midway,above] {$f^n$};
\draw [->,thick] (X8) -- (X9) node [midway,above] {};
\draw [->,thick] (X10) -- (X11) node [midway,above] {};
\draw [->,thick] (X11) -- (X12) node [midway,above] {$g^0$};
\draw [->,thick] (X12) -- (X13) node [midway,above] {$g^1$};
\draw [->,thick] (X13) -- (X14) node [midway,above] {$g^{n-1}$};
\draw [->,thick] (X14) -- (X15) node [midway,above] {$g^n$};
\draw [->,thick] (X15) -- (X16) node [midway,above] {};
\draw [->,thick] (X7) -- (X14) node [midway,above] {};
\draw [double,-,thick] (X8) -- (X15) node [midway,above] {};
\end{tikzpicture}
\end{center}
By taking pullback along $\Ker(f^n)\rightarrow \Ker(g^n)$ and by Remark \ref{3.4}, we obtain the following commutative diagram with exact rows, such that the bottom row is Yoneda equivalent to $\xi$.
\begin{center}
\begin{tikzpicture}
\node (X1) at (-4.5,0.5) {$0$};
\node (X2) at (-3,0.5) {$H(X^0)$};
\node (X3) at (-1,0.5) {$H(X^1)$};
\node (X4) at (.5,0.5) {$\cdots$};
\node (X5) at (2.5,0.5) {$H(X^{n-2})$};
\node (X6) at (5,0.5) {$H(X^{n-1})$};
\node (X7) at (7.5,0.5) {$H(X^n)$};
\node (X8) at (10,0.5) {$H(X^{n+1})$};
\node (X9) at (11.5,0.5) {$0$};
\node (X10) at (-4.5,-1) {$0$};
\node (X11) at (-3,-1) {$H(Y)$};
\node (X12) at (-1,-1) {$E^1$};
\node (X13) at (.5,-1) {$\cdots$};
\node (X14) at (2.5,-1) {$E^{n-2}$};
\node (X15) at (5,-1) {$E_{pb}^{n-1}$};
\node (X16) at (7.5,-1) {$H(X^n)$};
\node (X17) at (10,-1) {$H(X^{n+1})$};
\node (X18) at (11.5,-1) {$0$};
\draw [->,thick] (X1) -- (X2) node [midway,left] {};
\draw [->,thick] (X2) -- (X3) node [midway,left] {};
\draw [->,thick] (X3) -- (X4) node [midway,left] {};
\draw [->,thick] (X4) -- (X5) node [midway,left] {};
\draw [->,thick] (X5) -- (X6) node [midway,left] {};
\draw [->,thick] (X6) -- (X7) node [midway,above] {};
\draw [->,thick] (X7) -- (X8) node [midway,left] {};
\draw [->,thick] (X8) -- (X9) node [midway,above] {};
\draw [->,thick] (X10) -- (X11) node [midway,left] {};
\draw [->,thick] (X11) -- (X12) node [midway,above] {};
\draw [->,thick] (X12) -- (X13) node [midway,above] {};
\draw [->,thick] (X13) -- (X14) node [midway,above] {};
\draw [->,thick] (X14) -- (X15) node [midway,above] {};
\draw [->,thick] (X15) -- (X16) node [midway,above] {};
\draw [->,thick] (X16) -- (X17) node [midway,above] {};
\draw [->,thick] (X17) -- (X18) node [midway,above] {};
\draw [double,-,thick] (X7) -- (X16) node [midway,above] {};
\draw [double,-,thick] (X8) -- (X17) node [midway,above] {};
\end{tikzpicture}
\end{center}
Now for $j\in\{1,\cdots,n-1\}$ we set $C^j:=\Imm(H(X^j)\rightarrow H(X^{j+1}))$ in $\mathcal{L}_1(\mathcal{M})$. By applying the functor $\Hom_{\mathcal{L}_1(\mathcal{M})}(-,H(Y))$ to the short exact sequence
\begin{equation}
\eta_{n-1}:0\rightarrow C^{n-2}\rightarrow H(X^{n-1})\rightarrow C^{n-1}\rightarrow 0\notag
\end{equation}
we obtain the following exact sequence.

$
\Ext_{\mathcal{L}_1(\mathcal{M})}^{n-2}(H(X^{n-1}),H(Y))\rightarrow \Ext_{\mathcal{L}_1(\mathcal{M})}^{n-2}(C^{n-2},H(Y))\overset{\alpha}{\rightarrow}\Ext_{\mathcal{L}_1(\mathcal{M})}^{n-1}(C^{n-1},H(Y))\\
\rightarrow \Ext_{\mathcal{L}_1(\mathcal{M})}^{n-1}(H(X^{n-1}),H(Y))=0. \notag
$

Therefor $\alpha$ is a surjective map, and so by Remark \ref{3.3} there is an exact sequence
\begin{equation}
\epsilon:0\rightarrow H(Y)\rightarrow F^1\rightarrow F^2\rightarrow \cdots\rightarrow F^{n-2}\rightarrow C^{n-2}\rightarrow 0 \notag
\end{equation}
such that $\epsilon \eta_{n-1}$ is Yoneda equivalent to
\begin{equation}
0\rightarrow H(Y)\rightarrow E^1\rightarrow E^2\rightarrow \cdots\rightarrow E^{n-2}\rightarrow E_{pb}^{n-1}\rightarrow C^{n-1}\rightarrow 0 \notag
\end{equation}
Thus we have the following diagram, where the bottom row is still Yoneda equivalent to $\xi$.
\begin{center}
\begin{tikzpicture}
\node (X1) at (-4.5,0.5) {$0$};
\node (X2) at (-3,0.5) {$H(X^0)$};
\node (X3) at (-1,0.5) {$H(X^1)$};
\node (X4) at (.5,0.5) {$\cdots$};
\node (X5) at (2.5,0.5) {$H(X^{n-2})$};
\node (X6) at (5,0.5) {$H(X^{n-1})$};
\node (X7) at (7.5,0.5) {$H(X^n)$};
\node (X8) at (10,0.5) {$H(X^{n+1})$};
\node (X9) at (11.5,0.5) {$0$};
\node (X10) at (-4.5,-1) {$0$};
\node (X11) at (-3,-1) {$H(Y)$};
\node (X12) at (-1,-1) {$F^1$};
\node (X13) at (.5,-1) {$\cdots$};
\node (X14) at (2.5,-1) {$F^{n-2}$};
\node (X15) at (5,-1) {$H(X^{n-1})$};
\node (X16) at (7.5,-1) {$H(X^n)$};
\node (X17) at (10,-1) {$H(X^{n+1})$};
\node (X18) at (11.5,-1) {$0$};
\draw [->,thick] (X1) -- (X2) node [midway,left] {};
\draw [->,thick] (X2) -- (X3) node [midway,left] {};
\draw [->,thick] (X3) -- (X4) node [midway,left] {};
\draw [->,thick] (X4) -- (X5) node [midway,left] {};
\draw [->,thick] (X5) -- (X6) node [midway,left] {};
\draw [->,thick] (X6) -- (X7) node [midway,above] {};
\draw [->,thick] (X7) -- (X8) node [midway,left] {};
\draw [->,thick] (X8) -- (X9) node [midway,above] {};
\draw [->,thick] (X10) -- (X11) node [midway,left] {};
\draw [->,thick] (X11) -- (X12) node [midway,above] {};
\draw [->,thick] (X12) -- (X13) node [midway,above] {};
\draw [->,thick] (X13) -- (X14) node [midway,above] {};
\draw [->,thick] (X14) -- (X15) node [midway,above] {};
\draw [->,thick] (X15) -- (X16) node [midway,above] {};
\draw [->,thick] (X16) -- (X17) node [midway,above] {};
\draw [->,thick] (X17) -- (X18) node [midway,above] {};
\draw [double,-,thick] (X6) -- (X15) node [midway,above] {};
\draw [double,-,thick] (X7) -- (X16) node [midway,above] {};
\draw [double,-,thick] (X8) -- (X17) node [midway,above] {};
\end{tikzpicture}
\end{center}
By repeating this argument we obtain the following commutative diagram where the bottom row is Yoneda equivalent to $\xi$.
\begin{center}
\begin{tikzpicture}
\node (X1) at (-4.5,0.5) {$0$};
\node (X2) at (-3,0.5) {$H(X^0)$};
\node (X3) at (-1,0.5) {$H(X^1)$};
\node (X4) at (1,0.5) {$H(X^2)$};
\node (X5) at (2.5,0.5) {$\cdots$};
\node (X6) at (4.5,0.5) {$H(X^{n-1})$};
\node (X7) at (7,0.5) {$H(X^n)$};
\node (X8) at (9.5,0.5) {$H(X^{n+1})$};
\node (X9) at (11,0.5) {$0$};
\node (X10) at (-4.5,-1) {$0$};
\node (X11) at (-3,-1) {$H(Y)$};
\node (X12) at (-1,-1) {$G^1$};
\node (X13) at (1,-1) {$H(X^2)$};
\node (X14) at (2.5,-1) {$\cdots$};
\node (X15) at (4.5,-1) {$H(X^{n-1})$};
\node (X16) at (7,-1) {$H(X^n)$};
\node (X17) at (9.5,-1) {$H(X^{n+1})$};
\node (X18) at (11,-1) {$0$};
\draw [->,thick] (X1) -- (X2) node [midway,left] {};
\draw [->,thick] (X2) -- (X3) node [midway,left] {};
\draw [->,thick] (X3) -- (X4) node [midway,left] {};
\draw [->,thick] (X4) -- (X5) node [midway,left] {};
\draw [->,thick] (X5) -- (X6) node [midway,left] {};
\draw [->,thick] (X6) -- (X7) node [midway,above] {};
\draw [->,thick] (X7) -- (X8) node [midway,left] {};
\draw [->,thick] (X8) -- (X9) node [midway,above] {};
\draw [->,thick] (X10) -- (X11) node [midway,left] {};
\draw [->,thick] (X11) -- (X12) node [midway,above] {};
\draw [->,thick] (X12) -- (X13) node [midway,above] {};
\draw [->,thick] (X13) -- (X14) node [midway,above] {};
\draw [->,thick] (X14) -- (X15) node [midway,above] {};
\draw [->,thick] (X15) -- (X16) node [midway,above] {};
\draw [->,thick] (X16) -- (X17) node [midway,above] {};
\draw [->,thick] (X17) -- (X18) node [midway,above] {};
\draw [double,-,thick] (X4) -- (X13) node [midway,above] {};
\draw [double,-,thick] (X6) -- (X15) node [midway,above] {};
\draw [double,-,thick] (X7) -- (X16) node [midway,above] {};
\draw [double,-,thick] (X8) -- (X17) node [midway,above] {};
\end{tikzpicture}
\end{center}
Now by applying $\Hom_{\mathcal{L}_1(\mathcal{M})}(H(X^1),-)$ to the short exact sequence $0\rightarrow H(Y)\rightarrow G^1\rightarrow C^1\rightarrow 0$ we have the following exact sequence of abelian groups.
\begin{equation}
\Hom_{\mathcal{L}_1(\mathcal{M})}(H(X^1),G^1)\rightarrow \Hom_{\mathcal{L}_1(\mathcal{M})}(H(X^1),C^1)\rightarrow\Ext_{\mathcal{L}_1(\mathcal{M})}^1(H(X^1),H(Y))=0. \notag
\end{equation}
So there is a morphism $h^1:H(X^1)\rightarrow G^1$ that make the following diagram commutative. By the universal property of kernel there is also a morphism $h^0:H(X^0)\rightarrow H(Y)$ that make the following diagram commutative.
\begin{center}
\begin{tikzpicture}
\node (X1) at (-4.5,0.5) {$0$};
\node (X2) at (-3,0.5) {$H(X^0)$};
\node (X3) at (-1,0.5) {$H(X^1)$};
\node (X4) at (1,0.5) {$H(X^2)$};
\node (X5) at (2.5,0.5) {$\cdots$};
\node (X6) at (4.5,0.5) {$H(X^{n-1})$};
\node (X7) at (7,0.5) {$H(X^n)$};
\node (X8) at (9.5,0.5) {$H(X^{n+1})$};
\node (X9) at (11.5,0.5) {$0$};
\node (X10) at (-4.5,-1) {$0$};
\node (X11) at (-3,-1) {$H(Y)$};
\node (X12) at (-1,-1) {$G^1$};
\node (X13) at (1,-1) {$H(X^2)$};
\node (X14) at (2.5,-1) {$\cdots$};
\node (X15) at (4.5,-1) {$H(X^{n-1})$};
\node (X16) at (7,-1) {$H(X^n)$};
\node (X17) at (9.5,-1) {$H(X^{n+1})$};
\node (X18) at (11.5,-1) {$0.$};
\draw [->,thick] (X1) -- (X2) node [midway,left] {};
\draw [->,thick] (X2) -- (X3) node [midway,left] {};
\draw [->,thick] (X3) -- (X4) node [midway,left] {};
\draw [->,thick] (X4) -- (X5) node [midway,left] {};
\draw [->,thick] (X5) -- (X6) node [midway,left] {};
\draw [->,thick] (X6) -- (X7) node [midway,above] {};
\draw [->,thick] (X7) -- (X8) node [midway,left] {};
\draw [->,thick] (X8) -- (X9) node [midway,above] {};
\draw [->,thick] (X10) -- (X11) node [midway,left] {};
\draw [->,thick] (X11) -- (X12) node [midway,above] {};
\draw [->,thick] (X12) -- (X13) node [midway,above] {};
\draw [->,thick] (X13) -- (X14) node [midway,above] {};
\draw [->,thick] (X14) -- (X15) node [midway,above] {};
\draw [->,thick] (X15) -- (X16) node [midway,above] {};
\draw [->,thick] (X16) -- (X17) node [midway,above] {};
\draw [->,thick] (X17) -- (X18) node [midway,above] {};
\draw [->,thick] (X2) -- (X11) node [midway,left] {$h^0$};
\draw [->,thick] (X3) -- (X12) node [midway,left] {$h^1$};
\draw [double,-,thick] (X4) -- (X13) node [midway,above] {};
\draw [double,-,thick] (X6) -- (X15) node [midway,above] {};
\draw [double,-,thick] (X7) -- (X16) node [midway,above] {};
\draw [double,-,thick] (X8) -- (X17) node [midway,above] {};
\end{tikzpicture}
\end{center}
Let
\begin{center}
\begin{tikzpicture}
%\node (X1) at (-4.5,0.5) {$0$};
\node (X2) at (-3,0.5) {$X^0$};
\node (X3) at (-1.5,0.5) {$X^1$};
\node (X4) at (0,0.5) {$X^2$};
\node (X5) at (1.5,0.5) {$\cdots$};
\node (X6) at (3,0.5) {$X^{n-1}$};
\node (X7) at (4.5,0.5) {$X^n$};
\node (X8) at (6,0.5) {$X^{n+1}$};
%\node (X9) at (7.5,0.5) {$0$};
%\node (X10) at (-4.5,-1) {$0$};
\node (X11) at (-3,-1) {$Y$};
\node (X12) at (-1.5,-1) {$Y^1$};
\node (X13) at (0,-1) {$Y^2$};
\node (X14) at (1.5,-1) {$\cdots$};
\node (X15) at (3,-1) {$Y^{n-1}$};
\node (X16) at (4.5,-1) {$Y^n$};
\node (X17) at (6,-1) {$X^{n+1}$};
%\node (X18) at (7.5,-1) {$0$};
%\draw [->,thick] (X1) -- (X2) node [midway,left] {};
\draw [->,thick] (X2) -- (X3) node [midway,left] {};
\draw [->,thick] (X3) -- (X4) node [midway,left] {};
\draw [->,thick] (X4) -- (X5) node [midway,left] {};
\draw [->,thick] (X5) -- (X6) node [midway,left] {};
\draw [->,thick] (X6) -- (X7) node [midway,above] {};
\draw [->,thick] (X7) -- (X8) node [midway,left] {};
%\draw [->,thick] (X8) -- (X9) node [midway,above] {};
%\draw [->,thick] (X10) -- (X11) node [midway,left] {};
\draw [->,thick] (X11) -- (X12) node [midway,above] {};
\draw [->,thick] (X12) -- (X13) node [midway,above] {};
\draw [->,thick] (X13) -- (X14) node [midway,above] {};
\draw [->,thick] (X14) -- (X15) node [midway,above] {};
\draw [->,thick] (X15) -- (X16) node [midway,above] {};
\draw [->,thick] (X16) -- (X17) node [midway,above] {};
%\draw [->,thick] (X17) -- (X18) node [midway,above] {};
\draw [->,thick] (X2) -- (X11) node [midway,left] {$h$};
\draw [->,thick] (X3) -- (X12) node [midway,left] {};
\draw [->,thick] (X4) -- (X13) node [midway,above] {};
\draw [->,thick] (X6) -- (X15) node [midway,above] {};
\draw [->,thick] (X7) -- (X16) node [midway,above] {};
\draw [double,-,thick] (X8) -- (X17) node [midway,above] {};
\end{tikzpicture}
\end{center}
be the $n$-pushout diagram along $h$, where $H(h)=h^0$. Then applying the functor $H$, by \cite[Proposition 5.1]{Mac} we see that $\xi$ is Yoneda equivalent to
\begin{center}
$0\rightarrow H(Y)\rightarrow H(Y^1)\rightarrow \cdots \rightarrow H(Y^n)\rightarrow H(Y^{n+1})\rightarrow 0$.
\end{center}
\end{proof}
\end{theorem}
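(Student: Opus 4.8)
The plan is to start from an arbitrary $n$-fold extension
\begin{equation}
\xi: 0\rightarrow H(Y)\rightarrow E^1\rightarrow\cdots\rightarrow E^n\rightarrow H(X^{n+1})\rightarrow 0 \notag
\end{equation}
in $\mathcal{L}_1(\mathcal{M})$ whose two outer terms lie in the essential image of $H$, and to produce an $n$-exact sequence in $\mathcal{M}$ whose $H$-image is Yoneda equivalent to $\xi$. The overall idea is to replace the interior terms $E^n, E^{n-1}, \ldots, E^1$ by objects of the form $H(X^i)$ one at a time, proceeding from the admissible-epimorphism end towards the admissible-monomorphism end, and finally to recognise the resulting extension as the $H$-image of an $n$-pushout taken inside $\mathcal{M}$.

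First I would anchor the construction in $\mathcal{M}$ by means of weak effaceability, exactly as in the proof of Proposition \ref{3.2}. Since $\xi$ is exact in $\mathcal{L}_1(\mathcal{M})$, the cokernel of $E^n\rightarrow H(X^{n+1})$ computed in $\Mod(\mathcal{M})$ is weakly effaceable, so there is an admissible epimorphism $X^n\twoheadrightarrow X^{n+1}$ in $\mathcal{M}$ trivialising the obstruction. Completing it to an admissible $n$-exact sequence $X:X^0\rightarrowtail X^1\rightarrow\cdots\rightarrow X^n\twoheadrightarrow X^{n+1}$ and applying $H$ (which turns $n$-exact sequences into exact sequences by Proposition \ref{3.1}) furnishes a comparison map $H(X)\rightarrow\xi$ that is the identity at the top. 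A pullback along $\Ker(f^n)\rightarrow\Ker(g^n)$, via the calculus of Remark \ref{3.4}, then yields a Yoneda-equivalent extension whose two rightmost interior terms are already $H(X^n)$ and $H(X^{n+1})$.

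The inductive core rests on the $n$-rigidity of the essential image of $H$ (Proposition \ref{3.1}(ii)). Writing $C^j:=\Imm(H(X^j)\rightarrow H(X^{j+1}))$, the vanishing of $\Ext^{j}_{\mathcal{L}_1(\mathcal{M})}(H(X^j),H(Y))$ for $1\leq j\leq n-1$ forces the connecting homomorphisms $\Ext^{j-1}(C^{j-1},H(Y))\rightarrow\Ext^{j}(C^{j},H(Y))$ to be surjective; by the splicing description of these connecting maps (Remark \ref{3.3}) I can peel off one representable term at each stage, replacing $E^{n-1}$ by $H(X^{n-1})$, then $E^{n-2}$ by $H(X^{n-2})$, and so on, never leaving the Yoneda class of $\xi$. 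Iterating down to the second interior position leaves $G^1$ as the only non-representable term; here I pass from $\Ext$ to $\Hom$, using that $\Ext^1(H(X^1),H(Y))=0$ makes $\Hom(H(X^1),G^1)\rightarrow\Hom(H(X^1),C^1)$ surjective, lift the image map to $h^1:H(X^1)\rightarrow G^1$, and extract $h^0:H(X^0)\rightarrow H(Y)$ from the universal property of the kernel. Full faithfulness of $H$ on $\mathcal{M}$ identifies $h^0$ with $H(h)$ for a genuine morphism $h:X^0\rightarrow Y$ in $\mathcal{M}$.

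Finally I would form the $n$-pushout of $X$ along $h$ inside $\mathcal{M}$, producing an $n$-exact sequence $Y\rightarrow Y^1\rightarrow\cdots\rightarrow Y^n\rightarrow X^{n+1}$; applying $H$ and invoking the compatibility of pushout with Yoneda equivalence \cite[Proposition 5.1]{Mac} identifies $\xi$ with the $H$-image of this sequence, all of whose terms lie in the image of $H$. I expect the main obstacle to be the bookkeeping inside the inductive replacement step: at each stage one must certify both that the freshly built extension is still Yoneda equivalent to $\xi$ and that the $\Ext$-vanishing needed for the next stage remains available, and it is precisely here that the $n$-rigidity of the image and the splicing manipulations of Remarks \ref{3.3} and \ref{3.4} have to be dovetailed with care.
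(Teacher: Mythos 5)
Your proposal is correct and follows the paper's own proof essentially step for step: the effaceability argument producing the admissible $n$-exact sequence $X$ and the comparison map to $\xi$, the pullback along $\Ker(f^n)\rightarrow \Ker(g^n)$ via Remark \ref{3.4}, the inductive replacement of interior terms by representables using the $n$-rigidity of the image of $H$ together with the splicing description of connecting homomorphisms (Remark \ref{3.3}), the terminal lift $h^1$ with induced $h^0=H(h)$, and the concluding $n$-pushout in $\mathcal{M}$ combined with \cite[Proposition 5.1]{Mac}. No substantive deviation from the paper's argument is present.
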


\begin{proposition}\label{3.6}
Let $\mathcal{M}$ be a small $n$-exact category.
If two $n$-fold extension
\begin{equation}
\xi:0\rightarrow H_{X^0}\rightarrow H_{X^1}\rightarrow \cdots\rightarrow H_{X^{n}}\rightarrow H_{X^{n+1}}\rightarrow 0\notag
\end{equation}
and
\begin{equation}
\eta:0\rightarrow H_{X^0}\rightarrow H_{Y^1}\rightarrow \cdots\rightarrow H_{Y^{n}}\rightarrow H_{X^{n+1}}\rightarrow 0\notag
\end{equation}
be Yoneda equivalent in $\mathcal{L}_1(\mathcal{M})$, then $ X:X^0\rightarrow X^1\rightarrow \cdots\rightarrow X^{n}\rightarrow X^{n+1}$ and $Y:X^0\rightarrow Y^1\rightarrow \cdots\rightarrow Y^{n}\rightarrow X^{n+1}$ are homotopy equivalent $n$-exact sequences in $\mathcal{M}$.
\begin{proof}
By Proposition \ref{3.2}, $X$ and $Y$ are $n$-exact sequences. The rest of proof is similar to the proof of \cite[Proposition A.1]{I1}.
Since $H:\mathcal{M}\rightarrow \mathcal{L}_1(\mathcal{M})$ is full and faithful, we identify $\mathcal{M}$ with the essential image of $H$.
Because $\mathcal{M}$ is an $n$-rigid subcategory of $\mathcal{L}_1(\mathcal{M})$ by \cite[Proposition 2.2]{JK} we have the exact sequences
\begin{equation}
0\rightarrow (-,X^0)\rightarrow (-,X^1)\rightarrow \cdots\rightarrow (-,X^{n})\rightarrow (-,X^{n+1})\overset{\alpha}{\rightarrow} \Ext_{\mathcal{L}_1(\mathcal{M})}^n(-,X^0)\notag
\end{equation}
and
\begin{equation}
0\rightarrow (-,X^0)\rightarrow (-,Y^1)\rightarrow \cdots\rightarrow (-,Y^{n})\rightarrow (-,X^{n+1})\overset{\alpha}{\rightarrow} \Ext_{\mathcal{L}_1(\mathcal{M})}^n(-,X^0).\notag
\end{equation}
Since $X$ and $Y$ are Yoneda equivalent, the image of two functor $\alpha$ and $\beta$ are the same. So we have the following commutative diagram, where dotted arrows are induced by the factorization property of $n$-kernel.
\begin{center}
\begin{tikzpicture}
\node (X3) at (-6.5,1) {$0$};
\node (X4) at (-4.5,1) {$(-,X^0)$};
\node (X5) at (-2,1) {$(-,X^1)$};
\node (X6) at (0,1) {$\cdots$};
\node (X7) at (2,1) {$(-,X^{n})$};
\node (X8) at (4.5,1) {$(-,X^{n+1})$};
\node (X9) at (8,1) {$\Ext_{\mathcal{L}_1(\mathcal{M})}^n(-,X^0)$};
\node (X10) at (-6.5,-0.5) {$0$};
\node (X11) at (-4.5,-0.5) {$(-,X^0)$};
\node (X12) at (-2,-0.5) {$(-,Y^1)$};
\node (X13) at (0,-0.5) {$\cdots$};
\node (X14) at (2,-0.5) {$(-,Y^n)$};
\node (X15) at (4.5,-0.5) {$(-,X^{n+1})$};
\node (X16) at (8,-0.5) {$\Ext_{\mathcal{L}_1(\mathcal{M})}^n(-,X^0)$};
\draw [->,thick] (X3) -- (X4) node [midway,left] {};
\draw [->,thick] (X4) -- (X5) node [midway,above] {};
\draw [->,thick] (X5) -- (X6) node [midway,above] {};
\draw [->,thick] (X6) -- (X7) node [midway,above] {};
\draw [->,thick] (X7) -- (X8) node [midway,above] {};
\draw [->,thick] (X8) -- (X9) node [midway,above] {};
\draw [->,thick] (X10) -- (X11) node [midway,above] {};
\draw [->,thick] (X11) -- (X12) node [midway,above] {};
\draw [->,thick] (X12) -- (X13) node [midway,above] {};
\draw [->,thick] (X13) -- (X14) node [midway,above] {};
\draw [->,thick] (X14) -- (X15) node [midway,above] {};
\draw [->,thick] (X15) -- (X16) node [midway,above] {};
\draw [->,thick,dotted] (X4) -- (X11) node [midway,above] {};
\draw [->,thick,dotted] (X5) -- (X12) node [midway,above] {};
\draw [->,thick,dotted] (X7) -- (X14) node [midway,above] {};
\draw [double,-,thick] (X8) -- (X15) node [midway,above] {};
\draw [double,-,thick] (X9) -- (X16) node [midway,above] {};
\end{tikzpicture}
\end{center}
By Yoneda's lemma we obtain the following morphism of $n$-exact sequences.
\begin{center}
\begin{tikzpicture}
%\node (X1) at (-4.5,0.5) {$0$};
\node (X2) at (-3,0.5) {$X^0$};
\node (X3) at (-1.5,0.5) {$X^1$};
\node (X4) at (0,0.5) {$X^2$};
\node (X5) at (1.5,0.5) {$\cdots$};
\node (X6) at (3,0.5) {$X^{n-1}$};
\node (X7) at (4.5,0.5) {$X^n$};
\node (X8) at (6,0.5) {$X^{n+1}$};
%\node (X9) at (7.5,0.5) {$0$};
%\node (X10) at (-4.5,-1) {$0$};
\node (X11) at (-3,-1) {$X^0$};
\node (X12) at (-1.5,-1) {$Y^1$};
\node (X13) at (0,-1) {$Y^2$};
\node (X14) at (1.5,-1) {$\cdots$};
\node (X15) at (3,-1) {$Y^{n-1}$};
\node (X16) at (4.5,-1) {$Y^n$};
\node (X17) at (6,-1) {$X^{n+1}$};
%\node (X18) at (7.5,-1) {$0$};
%\draw [->,thick] (X1) -- (X2) node [midway,left] {};
\draw [->,thick] (X2) -- (X3) node [midway,left] {};
\draw [->,thick] (X3) -- (X4) node [midway,left] {};
\draw [->,thick] (X4) -- (X5) node [midway,left] {};
\draw [->,thick] (X5) -- (X6) node [midway,left] {};
\draw [->,thick] (X6) -- (X7) node [midway,above] {};
\draw [->,thick] (X7) -- (X8) node [midway,left] {};
%\draw [->,thick] (X8) -- (X9) node [midway,above] {};
%\draw [->,thick] (X10) -- (X11) node [midway,left] {};
\draw [->,thick] (X11) -- (X12) node [midway,above] {};
\draw [->,thick] (X12) -- (X13) node [midway,above] {};
\draw [->,thick] (X13) -- (X14) node [midway,above] {};
\draw [->,thick] (X14) -- (X15) node [midway,above] {};
\draw [->,thick] (X15) -- (X16) node [midway,above] {};
\draw [->,thick] (X16) -- (X17) node [midway,above] {};
%\draw [->,thick] (X17) -- (X18) node [midway,above] {};
\draw [->,thick,dotted] (X2) -- (X11) node [midway,left] {$h^0$};
\draw [->,thick,dotted] (X3) -- (X12) node [midway,left] {$h^1$};
\draw [->,thick,dotted] (X4) -- (X13) node [midway,left] {$h^2$};
\draw [->,thick,dotted] (X6) -- (X15) node [midway,left] {$h^{n-1}$};
\draw [->,thick,dotted] (X7) -- (X16) node [midway,left] {$h^n$};
\draw [double,-,thick] (X8) -- (X17) node [midway,above] {};
\end{tikzpicture}
\end{center}
Dually we have the following commutative diagram.
\begin{center}
\begin{tikzpicture}
\node (X7) at (-2.5,1) {$(X^{1},-)$};
\node (X8) at (0,1) {$(X^{0},-)$};
\node (X9) at (3.5,1) {$\Ext_{\mathcal{L}_1(\mathcal{M})}^n(X^{n+1},-)$};
\node (X14) at (-2.5,-0.5) {$(Y^1,-)$};
\node (X15) at (0,-0.5) {$(X^{0},-)$};
\node (X16) at (3.5,-0.5) {$\Ext_{\mathcal{L}_1(\mathcal{M})}^n(X^{n+1},-)$};
\draw [->,thick] (X7) -- (X8) node [midway,above] {$f^0$};
\draw [->,thick] (X8) -- (X9) node [midway,above] {$\delta$};
\draw [->,thick] (X14) -- (X15) node [midway,above] {$g^0$};
\draw [->,thick] (X15) -- (X16) node [midway,above] {$\delta'$};
\draw [->,thick] (X14) -- (X7) node [midway,left] {$h^1$};
\draw [->,thick] (X15) -- (X8) node [midway,left] {$h^0$};
\draw [double,-,thick] (X9) -- (X16) node [midway,above] {};
\end{tikzpicture}
\end{center}
By assumption $\delta(1_{X^0})=\delta'(1_{X^0})$ and so $\delta(h^0-1_{X^0})=0$. Hence there is a morphism $s:X^1\rightarrow X^0$ such that $h^0=1-sf^0$. Then we have the following equivalence of $n$-exact sequences with $\tilde{h^1}= h^1-g^0s$.
\begin{center}
\begin{tikzpicture}
%\node (X1) at (-4.5,0.5) {$0$};
\node (X2) at (-3,0.5) {$X^0$};
\node (X3) at (-1.5,0.5) {$X^1$};
\node (X4) at (0,0.5) {$X^2$};
\node (X5) at (1.5,0.5) {$\cdots$};
\node (X6) at (3,0.5) {$X^{n-1}$};
\node (X7) at (4.5,0.5) {$X^n$};
\node (X8) at (6,0.5) {$X^{n+1}$};
%\node (X9) at (7.5,0.5) {$0$};
%\node (X10) at (-4.5,-1) {$0$};
\node (X11) at (-3,-1) {$X^0$};
\node (X12) at (-1.5,-1) {$Y^1$};
\node (X13) at (0,-1) {$Y^2$};
\node (X14) at (1.5,-1) {$\cdots$};
\node (X15) at (3,-1) {$Y^{n-1}$};
\node (X16) at (4.5,-1) {$Y^n$};
\node (X17) at (6,-1) {$X^{n+1}$};
%\node (X18) at (7.5,-1) {$0$};
%\draw [->,thick] (X1) -- (X2) node [midway,left] {};
\draw [->,thick] (X2) -- (X3) node [midway,left] {};
\draw [->,thick] (X3) -- (X4) node [midway,left] {};
\draw [->,thick] (X4) -- (X5) node [midway,left] {};
\draw [->,thick] (X5) -- (X6) node [midway,left] {};
\draw [->,thick] (X6) -- (X7) node [midway,above] {};
\draw [->,thick] (X7) -- (X8) node [midway,left] {};
%\draw [->,thick] (X8) -- (X9) node [midway,above] {};
%\draw [->,thick] (X10) -- (X11) node [midway,left] {};
\draw [->,thick] (X11) -- (X12) node [midway,above] {};
\draw [->,thick] (X12) -- (X13) node [midway,above] {};
\draw [->,thick] (X13) -- (X14) node [midway,above] {};
\draw [->,thick] (X14) -- (X15) node [midway,above] {};
\draw [->,thick] (X15) -- (X16) node [midway,above] {};
\draw [->,thick] (X16) -- (X17) node [midway,above] {};
%\draw [->,thick] (X17) -- (X18) node [midway,above] {};
\draw [double,-,thick] (X2) -- (X11) node [midway,left] {};
\draw [->,thick] (X3) -- (X12) node [midway,left] {$\tilde{h^1}$};
\draw [->,thick] (X4) -- (X13) node [midway,left] {$h^2$};
\draw [->,thick] (X6) -- (X15) node [midway,left] {$h^{n-1}$};
\draw [->,thick] (X7) -- (X16) node [midway,left] {$h^n$};
\draw [double,-,thick] (X8) -- (X17) node [midway,above] {};
\end{tikzpicture}
\end{center}
\end{proof}
\end{proposition}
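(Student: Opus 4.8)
The plan is to translate the Yoneda equivalence of $\xi$ and $\eta$, which is an equality of classes in $\Ext^n_{\mathcal{L}_1(\mathcal{M})}(H_{X^{n+1}},H_{X^0})$, into an honest morphism of the underlying $n$-exact sequences $X\to Y$ in $\mathcal{M}$, and then to adjust that morphism so that it becomes the identity on both end terms $X^0$ and $X^{n+1}$. A morphism of $n$-exact sequences that restricts to isomorphisms on two (cyclically) consecutive terms is a weak isomorphism in the sense of \cite[Definition 4.1]{J}, and weak isomorphisms of $n$-exact sequences are homotopy equivalences; this delivers the conclusion. The argument runs parallel to \cite[Proposition A.1]{I1}, the extra ingredient being Proposition \ref{3.2}, which ensures that $X$ and $Y$ really are $n$-exact sequences in $\mathcal{M}$ once $\xi$ and $\eta$ are known to be exact in $\mathcal{L}_1(\mathcal{M})$.

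First I would use that the essential image of $H$ is $n$-rigid (Proposition \ref{3.1}). Applying \cite[Proposition 2.2]{JK} to $X$ and to $Y$, in the first variable, yields the long exact sequence
\begin{equation*}
0\to(-,X^0)\to(-,X^1)\to\cdots\to(-,X^{n})\to(-,X^{n+1})\overset{\alpha}{\to}\Ext^n_{\mathcal{L}_1(\mathcal{M})}(-,X^0)
\end{equation*}
and the analogous one for $Y$ with connecting map $\beta$, where the Yoneda classes of $\xi$ and $\eta$ are encoded by $\Imm\alpha$ and $\Imm\beta$. Yoneda equivalence gives $\Imm\alpha=\Imm\beta$, and comparing the two sequences while invoking the factorization property of the iterated $n$-kernels produces, via Yoneda's lemma, a morphism of $n$-exact sequences $X\to Y$ with components $h^0,\dots,h^n$ that is the identity on $X^{n+1}$.

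To repair the remaining endpoint I would run the dual computation in the second variable, producing connecting maps $\delta,\delta'$ into $\Ext^n_{\mathcal{L}_1(\mathcal{M})}(X^{n+1},-)$ with $\delta(1_{X^0})=\delta'(1_{X^0})$ by Yoneda equivalence. Then $\delta(h^0-1_{X^0})=0$, so exactness furnishes $s:X^1\to X^0$ with $h^0=1_{X^0}-s d_X^0$; replacing $h^1$ by $\tilde h^1:=h^1-d_Y^0 s$ restores commutativity and converts the comparison map into a morphism of $n$-exact sequences that is the identity on both $X^0$ and $X^{n+1}$, hence a weak isomorphism and a homotopy equivalence. The step I expect to demand the most care is the bookkeeping inside the two factorization arguments: one must verify that the maps extracted from the successive universal properties genuinely commute with every differential, and that the single correction $\tilde h^1$ absorbs the discrepancy $1_{X^0}-h^0$ without disturbing the higher components. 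The conceptual crux is the final implication, that a chain map fixing both ends is invertible up to homotopy, which is the $n$-dimensional counterpart of the corresponding classical fact.
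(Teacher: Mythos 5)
Your proposal is correct and follows essentially the same route as the paper's own proof: Proposition \ref{3.2} to recognize $X$ and $Y$ as $n$-exact sequences, $n$-rigidity of the image of $H$ together with \cite[Proposition 2.2]{JK} to compare the two long exact sequences ending in $\Ext^n_{\mathcal{L}_1(\mathcal{M})}(-,X^0)$, the factorization property of $n$-kernels plus Yoneda's lemma to extract a comparison map fixing $X^{n+1}$, and then the dual connecting-map computation producing $s:X^1\rightarrow X^0$ with $h^0=1-sd_X^0$ and the correction $\tilde{h}^1=h^1-d_Y^0s$ to fix $X^0$ as well. The only cosmetic difference is that you make explicit the final step (a morphism of $n$-exact sequences which is the identity on the two cyclically consecutive end terms is a weak isomorphism, hence a homotopy equivalence), which the paper leaves implicit.
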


%%%%%%%%%%%%%%%%%%%%%%%%%%%%%%%%%%%%%%%%%%
\section{$n$-cluster tilting subcategories}
Let $\mathcal{M}$ be an $n$-cluster tilting subcategory of an exact category $\mathcal{E}$. Using the result of previous section we show that every $n$-fold extension between two object in $\mathcal{M}$ is Yoneda equivalent to a unique (up to homotopy) $n$-exact sequence in $\mathcal{M}$. This is a generalization of a result by Iyama (\cite[Proposition A.1]{I1}). We also show a similar result for $n\mathbb{Z}$-cluster tilting subcategories.

\begin{proposition}$($\cite[Proposition 3.9]{EN2}$)$\label{4.1}
Let $\mathcal{M}$ be an $n$-cluster tilting subcategory of an exact category $\mathcal{E}$.
The restriction functor
$\mathbb{R}:\Mod(\mathcal{E})\rightarrow \Mod(\mathcal{M})$ induces an equivalence $\dfrac{\Mod(\mathcal{E})}{\Eff(\mathcal{E})}\overset{\widehat{\mathbb{R}}}{\simeq}\dfrac{\Mod(\mathcal{M})}{\Eff(\mathcal{M})}$ making the following diagram commutative.
\begin{center}
\begin{tikzpicture}
\node (X1) at (-4,2) {$\Mod(\mathcal{E})$};
\node (X2) at (0,2) {$\Mod(\mathcal{M})$};
\node (X3) at (-4,0) {$\dfrac{\Mod(\mathcal{E})}{\Eff(\mathcal{E})}$};
\node (X4) at (0,0) {$\dfrac{\Mod(\mathcal{M})}{\Eff(\mathcal{M})}$};
\draw [->,thick] (X1) -- (X2) node [midway,above] {$\mathbb{R}$};
\draw [->,thick] (X2) -- (X4) node [midway,right] {$\mathbb{Q}_{\mathcal{M}}$};
\draw [->,thick] (X1) -- (X3) node [midway,left] {$\mathbb{Q}_{\mathcal{E}}$};
\draw [->,thick] (X3) -- (X4) node [midway,below] {$\widehat{\mathbb{R}}$};
\end{tikzpicture}
\end{center}
\end{proposition}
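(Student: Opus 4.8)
The plan is to realise both Serre quotients as localisations of the single Grothendieck category $\Mod(\mathcal{E})$ and then to invoke the quotient-of-a-quotient theorem. Write $i:\mathcal{M}\hookrightarrow\mathcal{E}$ for the inclusion, so that $\mathbb{R}=i^{*}$ is restriction. Since limits and colimits in functor categories are computed pointwise, $\mathbb{R}$ is exact and preserves coproducts; hence its kernel $\mathcal{K}:=\{F\in\Mod(\mathcal{E})\mid \mathbb{R}F=0\}$ is a localising subcategory. Because $i$ is fully faithful, the right Kan extension $i_{*}$ is a fully faithful right adjoint of $\mathbb{R}$ with $\mathbb{R}\,i_{*}\cong\Id$, so $\mathbb{R}$ exhibits $\Mod(\mathcal{M})$ as the localisation $\Mod(\mathcal{E})/\mathcal{K}$. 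First I would set $\mathcal{S}:=\mathbb{R}^{-1}(\Eff(\mathcal{M}))$, which is a localising subcategory of $\Mod(\mathcal{E})$ containing $\mathcal{K}$ and corresponding to $\Eff(\mathcal{M})$ under the equivalence $\Mod(\mathcal{E})/\mathcal{K}\simeq\Mod(\mathcal{M})$. Gabriel's isomorphism $(\Mod(\mathcal{E})/\mathcal{K})/(\mathcal{S}/\mathcal{K})\simeq\Mod(\mathcal{E})/\mathcal{S}$ then identifies $\Mod(\mathcal{M})/\Eff(\mathcal{M})$ with $\Mod(\mathcal{E})/\mathcal{S}$, and the whole statement collapses to the single equality
\[
\mathcal{S}=\Eff(\mathcal{E}),\qquad\text{i.e.}\qquad F\in\Eff(\mathcal{E})\iff \mathbb{R}F\in\Eff(\mathcal{M}).
\]
Once this holds, $\widehat{\mathbb{R}}$ is the induced equivalence, and the square commutes because $\Eff(\mathcal{E})\subseteq\mathcal{S}$ forces $\mathbb{Q}_{\mathcal{M}}\mathbb{R}$ to annihilate $\Eff(\mathcal{E})$, so it factors uniquely through $\mathbb{Q}_{\mathcal{E}}$ by the universal property of the Serre quotient.

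For the inclusion $\mathcal{S}\subseteq\Eff(\mathcal{E})$ (the easy one) I would argue by effacement. Suppose $\mathbb{R}F\in\Eff(\mathcal{M})$ and take $E\in\mathcal{E}$, $x\in F(E)$. Condition $(ii)$ of $n$-cluster tilting gives a right $\mathcal{M}$-approximation $p\colon M\twoheadrightarrow E$ that is an admissible epimorphism of $\mathcal{E}$, with $M\in\mathcal{M}$. Applying effaceability of $\mathbb{R}F$ to $F(p)(x)\in F(M)$ yields an admissible epimorphism $q\colon M'\to M$ of the $n$-exact category $\mathcal{M}$ with $F(q)\bigl(F(p)(x)\bigr)=0$. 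As the admissible epimorphisms of $\mathcal{M}$ are exactly the cokernel maps of $\mathcal{E}$-acyclic complexes with terms in $\mathcal{M}$, $q$ is an admissible epimorphism of $\mathcal{E}$; by $(E1^{\mathrm{op}})$ the composite $pq$ is again one, and $F(pq)(x)=0$, whence $F\in\Eff(\mathcal{E})$.

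The main obstacle is the reverse inclusion $\Eff(\mathcal{E})\subseteq\mathcal{S}$: I must convert the \emph{arbitrary} deflation effacing a class over $\mathcal{E}$ into an \emph{admissible $n$-exact} epimorphism of $\mathcal{M}$. Given $F\in\Eff(\mathcal{E})$, $M\in\mathcal{M}$ and $x\in F(M)$, choose a deflation $g\colon E\twoheadrightarrow M$ with $F(g)(x)=0$; it then suffices to produce an admissible $n$-exact sequence $0\to M^{0}\to\cdots\to M^{n}\xrightarrow{h} M\to 0$ in $\mathcal{M}$ whose deflation $h$ factors through $g$, for then $F(h)(x)=0$. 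I would build it by iterated right $\mathcal{M}$-approximations: an approximation $M^{n}\twoheadrightarrow E$ gives $h=g\circ(M^{n}\twoheadrightarrow E)$ and $L:=\Ker h$, and then a length-$(n-1)$ approximation resolution $M^{n-1}\twoheadrightarrow L$, $M^{n-2}\twoheadrightarrow\Omega_{1}$, $\dots$, $M^{1}\twoheadrightarrow\Omega_{n-2}$ with syzygies $\Omega_{k}$ and $M^{0}:=\Omega_{n-1}$. The delicate point, and the heart of the argument, is that this resolution closes up \emph{inside} $\mathcal{M}$, i.e.\ $M^{0}\in\mathcal{M}$. To see this I would verify $\Ext^{i}_{\mathcal{E}}(N,M^{0})=0$ for every $N\in\mathcal{M}$ and $1\le i\le n-1$, using the characterisation $\mathcal{M}=\{E\mid\Ext^{i}_{\mathcal{E}}(\mathcal{M},E)=0,\ 1\le i\le n-1\}$. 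Dimension shifting along the conflations $\Omega_{k}\rightarrowtail M^{n-k}\twoheadrightarrow\Omega_{k-1}$ reduces $\Ext^{i}_{\mathcal{E}}(N,M^{0})$ to $\Ext^{1}_{\mathcal{E}}(N,\Omega_{n-i})$, the $n$-rigidity of $\mathcal{M}$ killing all intermediate $\Ext$-groups of the $M^{j}$; the surviving group vanishes precisely because each $M^{j}\twoheadrightarrow\Omega_{n-1-j}$ is a \emph{right} $\mathcal{M}$-approximation, so $\Hom_{\mathcal{E}}(N,M^{j})\to\Hom_{\mathcal{E}}(N,\Omega_{n-1-j})$ is surjective and the relevant connecting map is zero. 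This interplay between the approximation property (surjectivity on $\Hom$) and rigidity (vanishing of higher $\Ext$) is exactly what forces $M^{0}\in\mathcal{M}$.

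Finally the spliced complex is $\mathcal{E}$-acyclic with all terms in $\mathcal{M}$, hence an admissible $n$-exact sequence by Theorem~\ref{2.5}, which completes the reverse inclusion. With $\mathcal{S}=\Eff(\mathcal{E})$ established, the quotient-of-a-quotient identification of the first paragraph delivers the equivalence $\widehat{\mathbb{R}}$ and the commutativity of the diagram, essential surjectivity being already visible from the section $i_{*}$ of $\mathbb{R}$.
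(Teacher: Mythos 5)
The paper contains no proof of Proposition~\ref{4.1}: it is imported verbatim from \cite[Proposition 3.9]{EN2}, so there is no in-paper argument to compare against line by line. Judged on its own terms, your proof is correct, and its skeleton --- realise $\Mod(\mathcal{M})$ as the Serre quotient $\Mod(\mathcal{E})/\mathcal{K}$ via the exact restriction functor with fully faithful adjoint $i_*$, apply Gabriel's third-isomorphism theorem, and collapse the whole statement to the single equality $\mathbb{R}^{-1}(\Eff(\mathcal{M}))=\Eff(\mathcal{E})$ --- is the natural localisation-theoretic route underlying the cited result. Two bookkeeping points in that reduction deserve explicit mention: you need $\Eff(\mathcal{M})$ to be \emph{localising} in $\Mod(\mathcal{M})$ for the lattice correspondence between localising subcategories of $\Mod(\mathcal{E})/\mathcal{K}$ and those of $\Mod(\mathcal{E})$ containing $\mathcal{K}$ (this is Proposition~\ref{3.1}(i)), and you need essential surjectivity of $\mathbb{R}$ (clear from the section $i_*$) to know that $\mathcal{S}$ corresponds to all of $\Eff(\mathcal{M})$ rather than a proper subcategory.

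Both inclusions of the key equality check out. For $\mathcal{S}\subseteq\Eff(\mathcal{E})$ you tacitly use that admissible epimorphisms of the induced $n$-exact structure on $\mathcal{M}$ are deflations of $\mathcal{E}$; this is correct because, by Theorem~\ref{2.5}, the admissible $n$-exact sequences of $\mathcal{M}$ are precisely the $\mathcal{E}$-acyclic complexes with terms in $\mathcal{M}$, but since you invoke the same identification again at the end of the reverse inclusion, it merits one explicit sentence. For $\Eff(\mathcal{E})\subseteq\mathcal{S}$, the coresolution argument is the crux and it is sound: with conflations $\Omega_k\rightarrowtail M^{n-k}\twoheadrightarrow\Omega_{k-1}$, the $n$-rigidity of $\mathcal{M}$ yields $\Ext^i_{\mathcal{E}}(N,M^0)\cong\Ext^1_{\mathcal{E}}(N,\Omega_{n-i})$ for all $N\in\mathcal{M}$ and $1\le i\le n-1$, and the latter group dies because $\Hom_{\mathcal{E}}(N,M^{i})\to\Hom_{\mathcal{E}}(N,\Omega_{n-i-1})$ is surjective (each $M^{i}\twoheadrightarrow\Omega_{n-i-1}$ being a right $\mathcal{M}$-approximation) while $\Ext^1_{\mathcal{E}}(N,M^{i})=0$; condition (iii) of the definition then forces $M^0\in\mathcal{M}$, the spliced acyclic complex is admissible, and $F(h)(x)=F(u)\bigl(F(g)(x)\bigr)=0$ by contravariance. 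Note also that nothing at the right-hand end of your sequence needs to be an approximation --- only the maps resolving $L$ do --- which your write-up correctly exploits. The one caveat, which the paper shares by suppressing it, is set-theoretic: $\mathcal{E}$ should be skeletally small for $\Mod(\mathcal{E})$ and the Serre quotients to exist.
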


\begin{theorem}\label{4.2}
Let $\mathcal{M}$ be an $n$-cluster tilting subcategory of an exact category $\mathcal{E}$ and
\begin{equation}
\xi:0\rightarrow X^0\rightarrow E^1\rightarrow E^2\rightarrow\cdots\rightarrow E^n\rightarrow X^{n+1}\rightarrow 0\notag
\end{equation}
be an acyclic sequence in $\mathcal{E}$ with $X^0,X^{n+1}\in \mathcal{M}$. Then there is a unique (up to homotopy) $n$-exact sequence
\begin{equation}
0\rightarrow X^0\rightarrow X^1\rightarrow X^2\rightarrow\cdots\rightarrow X^n\rightarrow X^{n+1}\rightarrow 0\notag
\end{equation}
that is Yoneda equivalent to $\xi$.
\begin{proof}
By the above proposition we have the following commutative diagram of functors.
\begin{center}
\begin{tikzpicture}
\node (X1) at (-4,2) {$\mathcal{M}$};
\node (X2) at (0,2) {$\mathcal{E}$};
\node (X3) at (-4,0) {$\dfrac{\Mod(\mathcal{M})}{\Eff(\mathcal{M})}$};
\node (X4) at (0,0) {$\dfrac{\Mod(\mathcal{E})}{\Eff(\mathcal{E})},$};
\draw [->,thick] (X1) -- (X2) node [midway,above] {$i$};
\draw [->,thick] (X2) -- (X4) node [midway,right] {};
\draw [->,thick] (X1) -- (X3) node [midway,left] {};
\draw [->,thick] (X3) -- (X4) node [midway,below] {$\simeq$};
\end{tikzpicture}
\end{center}
where $i$ is an inclusion functor. Then the result follows from Theorem \ref{3.5} and Proposition \ref{3.6}.
\end{proof}
\end{theorem}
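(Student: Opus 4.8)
The plan is to move the entire problem into the ambient abelian categories produced by the two Gabriel--Quillen embeddings, and then let Theorem \ref{3.5} and Proposition \ref{3.6} finish it. First I would observe that the acyclic sequence $\xi$, having its end terms $X^0,X^{n+1}$ in $\mathcal{M}$, is an $n$-fold extension of $X^{n+1}$ by $X^0$ and hence represents a class in $\Ext^n_{\mathcal{E}}(X^{n+1},X^0)$. Passing $\xi$ through the classical Gabriel--Quillen embedding $\mathcal{E}\hookrightarrow \frac{\Mod(\mathcal{E})}{\Eff(\mathcal{E})}$ --- which is exact, reflects exactness, and realizes $\mathcal{E}$ as an extension-closed subcategory, so that the Yoneda groups $\Ext^n_{\mathcal{E}}$ agree with those computed in the ambient abelian category --- yields an $n$-fold extension of the image of $X^{n+1}$ by the image of $X^0$ representing the same class. (If $\mathcal{M}$ is not already small one first restricts attention to a small subcategory, since the machinery of Section 3 and Proposition \ref{4.1} requires it.)

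Next I would invoke Proposition \ref{4.1}, which produces an exact equivalence $\frac{\Mod(\mathcal{E})}{\Eff(\mathcal{E})}\simeq\frac{\Mod(\mathcal{M})}{\Eff(\mathcal{M})}\simeq\mathcal{L}_1(\mathcal{M})$ compatible with the two embeddings, as recorded by the commutative square of functors in the statement. Because $X^0$ and $X^{n+1}$ already lie in $\mathcal{M}$, their images correspond to the representables $H_{X^0}$ and $H_{X^{n+1}}$, so the $n$-fold extension above is transported to an $n$-fold extension of $H_{X^{n+1}}$ by $H_{X^0}$ in $\mathcal{L}_1(\mathcal{M})$. Now Theorem \ref{3.5} applies directly: the essential image of $H$ is closed under $n$-extensions up to Yoneda equivalence, so this extension is Yoneda equivalent to the image under $H$ of an $n$-exact sequence $0\to X^0\to X^1\to\cdots\to X^n\to X^{n+1}\to 0$ in $\mathcal{M}$. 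Since $\mathcal{M}$ carries the induced $n$-exact structure of an $n$-cluster tilting subcategory (Theorem \ref{2.5}), this sequence is an acyclic sequence in $\mathcal{E}$, and reading the identifications backwards shows that it is Yoneda equivalent to $\xi$.

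For the uniqueness clause I would appeal to Proposition \ref{3.6}: two $n$-exact sequences in $\mathcal{M}$ with the same end terms whose $H$-images are Yoneda equivalent in $\mathcal{L}_1(\mathcal{M})$ are homotopy equivalent. As Yoneda equivalence of extensions and homotopy equivalence of $n$-exact sequences are preserved in both directions along the exact equivalences above, the $n$-exact sequence obtained is unique up to homotopy, which is exactly the asserted uniqueness.

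The hard part will not be the existence step --- Theorem \ref{3.5} hands that over --- but the bookkeeping of the first two paragraphs: one must verify that the composite $\mathcal{E}\hookrightarrow \frac{\Mod(\mathcal{E})}{\Eff(\mathcal{E})}\simeq\mathcal{L}_1(\mathcal{M})$ is exact and extension closed in each degree, so that it induces an isomorphism $\Ext^n_{\mathcal{E}}(X^{n+1},X^0)\cong\Ext^n_{\mathcal{L}_1(\mathcal{M})}(H_{X^{n+1}},H_{X^0})$ respecting the Yoneda relation, and that the equivalence sends the representable images of $X^0$ and $X^{n+1}$ to $H_{X^0}$ and $H_{X^{n+1}}$. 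This is precisely the content encoded in the commutative diagram of Proposition \ref{4.1}, and once it is in place the two cited results close the argument.
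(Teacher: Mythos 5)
Your proposal is correct and follows essentially the same route as the paper: transport $\xi$ along the classical Gabriel--Quillen embedding and the equivalence $\frac{\Mod(\mathcal{E})}{\Eff(\mathcal{E})}\simeq\frac{\Mod(\mathcal{M})}{\Eff(\mathcal{M})}$ of Proposition \ref{4.1}, then obtain existence from Theorem \ref{3.5} and uniqueness from Proposition \ref{3.6}. The paper's proof is just a terse statement of this plan, and your bookkeeping paragraphs (exactness of the embedding, identification of the representables, the smallness caveat) merely make explicit what the paper leaves implicit.
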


In the following theorem we show that for $n\mathbb{Z}$-cluster tilting subcategories, every $kn$-fold extension is Yoneda equivalence to splicing of $k$, $n$-exact sequences.

\begin{theorem}\label{4.3}
Let $\mathcal{M}$ be an $n\mathbb{Z}$-cluster tilting subcategory of an exact category $\mathcal{E}$ and
\begin{equation}
\xi:0\rightarrow X^0\overset{f^0}{\longrightarrow} E^1\overset{f^1}{\longrightarrow} \cdots\overset{f^{kn-2}}{\longrightarrow} E^{kn-1}\overset{f^{kn-1}}{\longrightarrow} E^{kn}\overset{f^{kn}}{\longrightarrow} X^{kn+1}\rightarrow 0\notag
\end{equation}
be a $kn$-fold extension with $X^0,X^{kn+1}\in \mathcal{M}$. Then $\xi$ is Yoneda equivalent to splicing of $k$, $n$-exact sequences.
\begin{proof}
We use the induction on $k$. The case $k=1$ was proved in Theorem \ref{4.2}. Now let $k\geq 2$ and assume that the result follows for any $m$, $m<k$. Let $X^{kn}\twoheadrightarrow E^{kn}$ be a deflation with $X^{kn}\in \mathcal{M}$. So the composition $X^{kn}\twoheadrightarrow E^{kn}\twoheadrightarrow X^{kn+1}$ is a deflation. Thus we have the following commutative diagram where the top row is an $n$-exact sequence.
\begin{center}
\begin{tikzpicture}
\node (X4) at (-1.5,0.5) {$0$};
\node (X5) at (0,0.5) {$X^{(k-1)n}$};
\node (X6) at (2,0.5) {$\cdots$};
\node (X7) at (4,0.5) {$X^{kn}$};
\node (X8) at (6,0.5) {$X^{kn+1}$};
\node (X9) at (7.5,0.5) {$0$};
\node (X10) at (-7.5,-1) {$0$};
\node (X11) at (-6,-1) {$E^0$};
\node (X12) at (-4,-1) {$E^1$};
\node (X13) at (-2,-1) {$\cdots$};
\node (X14) at (0,-1) {$E^{(k-1)n}$};
\node (X15) at (2,-1) {$\cdots$};
\node (X16) at (4,-1) {$X^{kn}$};
\node (X17) at (6,-1) {$X^{kn+1}$};
\node (X18) at (7.5,-1) {$0$};
\draw [->,thick] (X4) -- (X5) node [midway,left] {};
\draw [->,thick] (X5) -- (X6) node [midway,left] {};
\draw [->,thick] (X6) -- (X7) node [midway,above] {};
\draw [->,thick] (X7) -- (X8) node [midway,left] {};
\draw [->,thick] (X8) -- (X9) node [midway,above] {};
\draw [->,thick] (X10) -- (X11) node [midway,left] {};
\draw [->,thick] (X11) -- (X12) node [midway,above] {};
\draw [->,thick] (X12) -- (X13) node [midway,above] {};
\draw [->,thick] (X13) -- (X14) node [midway,above] {};
\draw [->,thick] (X14) -- (X15) node [midway,above] {};
\draw [->,thick] (X15) -- (X16) node [midway,above] {};
\draw [->,thick] (X16) -- (X17) node [midway,above] {};
\draw [->,thick] (X17) -- (X18) node [midway,above] {};
\draw [->,thick] (X7) -- (X16) node [midway,above] {};
\draw [double,-,thick] (X8) -- (X17) node [midway,above] {};
\end{tikzpicture}
\end{center}
The proof of Theorem \ref{3.5} carries over to show that $\xi$ is Yoneda equivalent to a $kn$-fold extension of the following form.
\begin{center}
\begin{tikzpicture}
\node (X1) at (-7,0) {$0$};
\node (X2) at (-6,0) {$\bar{X^0}$};
\node (X3) at (-4.5,0) {$\bar{E^1}$};
\node (X4) at (-3,0) {$\cdots$};
\node (X5) at (-1.5,0) {$\bar{E}^{(k-1)n}$};
\node (X6) at (0,-1) {$X^{(k-1)n}$};
\node (X7) at (1.5,0) {$X^{(k-1)n+1}$};
\node (X8) at (3.5,0) {$\cdots$};
\node (X9) at (5,0) {$X^{kn}$};
\node (X10) at (6.5,0) {$X^{kn+1}$};
\node (X11) at (8,0) {$0$};
\draw [->,thick] (X1) -- (X2) node [midway,left] {};
\draw [->,thick] (X2) -- (X3) node [midway,left] {};
\draw [->,thick] (X3) -- (X4) node [midway,above] {};
\draw [->,thick] (X4) -- (X5) node [midway,left] {};
\draw [->>,thick] (X5) -- (X6) node [midway,left] {};
\draw [->,thick] (X5) -- (X7) node [midway,left] {};
\draw [>->,thick] (X6) -- (X7) node [midway,above] {};
\draw [->,thick] (X7) -- (X8) node [midway,left] {};
\draw [->,thick] (X8) -- (X9) node [midway,above] {};
\draw [->,thick] (X9) -- (X10) node [midway,above] {};
\draw [->,thick] (X10) -- (X11) node [midway,left] {};
\end{tikzpicture}
\end{center}
Then the result follows from induction hypothesis.
\end{proof}
\end{theorem}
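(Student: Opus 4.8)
The plan is to argue by induction on $k$, peeling off a single $n$-exact sequence from the right-hand end of $\xi$ at each stage. The base case $k=1$ is precisely Theorem \ref{4.2}, which already produces an $n$-exact sequence Yoneda equivalent to a given acyclic $n$-fold extension between objects of $\mathcal{M}$. So for $k\geq 2$ I would assume the statement for all smaller values and reduce the given $kn$-fold extension to a $(k-1)n$-fold extension whose two endpoints again lie in $\mathcal{M}$, to which the induction hypothesis applies.

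The first move is to replace the term $E^{kn}$ by an object of $\mathcal{M}$. Since $\mathcal{M}$ is contravariantly finite in $\mathcal{E}$, there is a deflation $X^{kn}\twoheadrightarrow E^{kn}$ with $X^{kn}\in\mathcal{M}$; composing with $f^{kn}\colon E^{kn}\twoheadrightarrow X^{kn+1}$ yields a deflation $X^{kn}\twoheadrightarrow X^{kn+1}$ between objects of $\mathcal{M}$, and a standard Yoneda manipulation rewrites $\xi$, up to equivalence, as a $kn$-fold extension whose penultimate term is $X^{kn}$. The crucial point is then to extend this deflation to an $n$-exact sequence $0\to X^{(k-1)n}\to X^{(k-1)n+1}\to\cdots\to X^{kn}\twoheadrightarrow X^{kn+1}\to 0$ inside $\mathcal{M}$. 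This is exactly where the $n\mathbb{Z}$-hypothesis is indispensable: constructing such a sequence amounts to taking an $n$-fold resolution of $X^{kn+1}$ by $\mathcal{M}$ and requiring its $n$th syzygy $X^{(k-1)n}$ to return to $\mathcal{M}$, which is the closure-under-$n$th-syzygies characterisation of $n\mathbb{Z}$-cluster tilting subcategories underlying Theorem \ref{1.1}.

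With this $n$-exact sequence at hand I would transport the comparison argument of Theorem \ref{3.5} directly into $\mathcal{E}$: lift the identity on $X^{kn+1}$ to a chain map from the $n$-exact sequence into (the modified) $\xi$, using that $\Ext_{\mathcal{E}}^i(\mathcal{M},\mathcal{M})=0$ for $1\le i\le n-1$ by $n$-rigidity to build the intermediate components, and then form the successive pullbacks of Remark \ref{3.4} to rewrite $\xi$, up to Yoneda equivalence, as the splice of the $n$-exact sequence $X^{(k-1)n}\rightarrowtail\cdots\twoheadrightarrow X^{kn+1}$ with a $(k-1)n$-fold extension $0\to X^0\to\bar{E}^1\to\cdots\to\bar{E}^{(k-1)n}\twoheadrightarrow X^{(k-1)n}\to 0$ whose endpoints $X^0$ and $X^{(k-1)n}$ both lie in $\mathcal{M}$. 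Applying the induction hypothesis to this $(k-1)n$-fold extension expresses it as a splice of $k-1$ many $n$-exact sequences, and splicing on the final $n$-exact sequence produces the desired decomposition of $\xi$ into $k$ pieces.

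I expect the middle step to be the main obstacle. Producing the $n$-exact sequence with its left-hand cut point $X^{(k-1)n}$ genuinely in $\mathcal{M}$ is the heart of the matter, since for an arbitrary $n$-cluster tilting subcategory the $n$th syzygy need not lie in $\mathcal{M}$ and the cut simply cannot be made; it is the $n\mathbb{Z}$-condition that removes this obstruction. The second delicate point is checking that the pullback construction of Theorem \ref{3.5}, originally carried out in the abelian category $\mathcal{L}_1(\mathcal{M})$, goes through verbatim in the ambient exact category $\mathcal{E}$. Once these two facts are secured, the remaining index bookkeeping and the repeated application of Remark \ref{3.4} are routine.
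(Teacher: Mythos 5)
Your outline reproduces the paper's strategy (induction on $k$, right $\mathcal{M}$-approximation of $E^{kn}$, embedding the resulting deflation $X^{kn}\twoheadrightarrow X^{kn+1}$ into an $n$-exact sequence, transporting the comparison argument of Theorem \ref{3.5}, then applying the induction hypothesis to the leftover $(k-1)n$-fold extension), but you have located the use of the $n\mathbb{Z}$-hypothesis in the wrong step, and this leaves a genuine gap. Completing a deflation between objects of $\mathcal{M}$ to an $n$-exact sequence with all terms in $\mathcal{M}$ does \emph{not} need the $n\mathbb{Z}$-condition: it is part of the $n$-exact structure that every $n$-cluster tilting subcategory inherits (Theorem \ref{2.5}); the iterated right $\mathcal{M}$-approximations of the kernel terminate in $\mathcal{M}$ by the $\Ext$-characterisation in condition (iii) of the definition, and indeed the paper performs exactly this embedding (for the dual inflation) in the proof of Theorem \ref{1.1} under the bare $n$-cluster tilting hypothesis. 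The ``closure under $n$th syzygies'' characterisation you invoke is the Iyama--Jasso statement for dualizing $R$-varieties, which presupposes enough projectives --- precisely the assumption this paper avoids --- and in any case the relevant syzygy here is an approximation syzygy, not a projective one.

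The step that would actually fail in your version is the transported Theorem \ref{3.5} argument, which you claim needs only the $n$-rigidity vanishing $\Ext^i_{\mathcal{E}}(\mathcal{M},\mathcal{M})=0$ for $1\le i\le n-1$. After the pullback of Remark \ref{3.4} replacing the penultimate term by $X^{kn}$, the successive replacement of the terms in positions $kn-1,\dots,(k-1)n+1$ by $X^{kn-1},\dots,X^{(k-1)n+1}$ (so that the sequence can be cut at $X^{(k-1)n}$) rests on the surjectivity of connecting maps of the form $\Ext^{p-1}_{\mathcal{E}}(C^{p-1},X^0)\rightarrow\Ext^{p}_{\mathcal{E}}(C^{p},X^0)$, which requires $\Ext^{p}_{\mathcal{E}}(X^{p},X^0)=0$ for every $p\in\{(k-1)n+1,\dots,kn-1\}$. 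For $k\ge 2$ these degrees exceed $n-1$, so $n$-rigidity gives nothing; it is exactly the $n\mathbb{Z}$-condition ($\Ext^i_{\mathcal{E}}(\mathcal{M},\mathcal{M})=0$ for all $i\notin n\mathbb{Z}$) that supplies these vanishings, as Remark \ref{4.4} makes explicit. This is not a presentational quibble: by Theorem \ref{1.1} and Remark \ref{4.6} the conclusion of the theorem \emph{characterises} the $n\mathbb{Z}$-condition among $n$-cluster tilting subcategories, so an argument consuming only degree-$\le n-1$ vanishing at this stage cannot be correct. (A smaller inaccuracy: one cannot simply ``lift the identity on $X^{kn+1}$ to a chain map'' from the $n$-exact sequence into $\xi$, since the objects of $\mathcal{M}$ are not projective in $\mathcal{E}$; as in Theorem \ref{3.5}, one only gets the single commuting square at the right end from the approximation, and the remaining terms are matched one at a time via the pullback-and-connecting-map corrections above.) Once the $n\mathbb{Z}$-input is moved from the unproblematic construction of the $n$-exact tail to these higher-degree vanishings, your proof coincides with the paper's.
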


\begin{remark}\label{4.4}
Let $\mathcal{M}$ be a small $n$-exact category. Viewing $\mathcal{M}$ as an $n$-rigid subcategory of $\mathcal{L}_1(\mathcal{M})$, by a similar argument as in the proof of Theorem \ref{4.3} we can see that the following conditions are equivalent.
\begin{itemize}
\item[(1)]
For every $i\in \{1,\cdots,kn-1\}\backslash n\mathbb{Z}$ we have $\Ext_{\mathcal{L}_1(\mathcal{M})}^i(\mathcal{M},\mathcal{M})=0$.
\item[(2)]
Every $kn$-fold extension of two object in $\mathcal{M}$ is Yoneda equivalence to splicing of $k$, $n$-exact sequences.
\end{itemize}
\end{remark}

For the proof of Theorem \ref{1.1} we need the following lemma.

\begin{lemma}\label{4.5}
Let $\mathcal{M}$ be an $n$-cluster tilting subcategory of an exact category $\mathcal{E}$ and $Y:0\rightarrow Y^0\rightarrow Y^1\rightarrow\cdots\rightarrow Y^n\rightarrow Y^{n+1}\rightarrow 0$ be an $n$-exact sequence in $\mathcal{M}$. For $j\in\{1,\cdots,n-1\}$ set $C^j:=\Imm(X^j\rightarrow X^{j+1})$ in $\mathcal{E}$. Indeed, split $Y$ into short exact sequences as follows.
%\begin{center}
\begin{diagram}\label{split}
\begin{tikzpicture}
\node (X1) at (-7.5,0) {$0$};
\node (X2) at (-6,0) {$Y^0$};
\node (X3) at (-4,0) {$Y^1$};
\node (X4) at (-2,0) {$Y^2$};
\node (X5) at (0,0) {$\cdots$};
\node (X6) at (2,0) {$Y^{n-1}$};
\node (X7) at (4,0) {$Y^n$};
\node (X8) at (6,0) {$Y^{n+1}$};
\node (X9) at (7.5,0) {$0$};
\node (X10) at (-3,-1) {$C^1$};
\node (X11) at (-1,-1) {$C^2$};
\node (X12) at (1,-1) {$C^{n-2}$};
\node (X13) at (3,-1) {$C^{n-1}$};
\draw [->,thick] (X1) -- (X2) node [midway,left] {};
\draw [->,thick] (X2) -- (X3) node [midway,left] {};
\draw [->,thick] (X3) -- (X4) node [midway,left] {};
\draw [->,thick] (X4) -- (X5) node [midway,left] {};
\draw [->,thick] (X5) -- (X6) node [midway,left] {};
\draw [->,thick] (X6) -- (X7) node [midway,above] {};
\draw [->,thick] (X7) -- (X8) node [midway,left] {};
\draw [->,thick] (X8) -- (X9) node [midway,above] {};
\draw [->>,thick] (X3) -- (X10) node [midway,left] {};
\draw [>->,thick] (X10) -- (X4) node [midway,above] {};
\draw [->>,thick] (X4) -- (X11) node [midway,above] {};
\draw [>->,thick] (X11) -- (X5) node [midway,above] {};
\draw [->>,thick] (X5) -- (X12) node [midway,above] {};
\draw [>->,thick] (X12) -- (X6) node [midway,above] {};
\draw [->>,thick] (X6) -- (X13) node [midway,above] {};
\draw [>->,thick] (X13) -- (X7) node [midway,above] {};
\end{tikzpicture}
\end{diagram}
%\end{center}
\begin{itemize}
\item[(1)]
Let $k\in \{1,\cdots,n-1\}$ such that $\Ext_{\mathcal{E}}^k(\mathcal{M},C^j)\neq 0$, then $k=n-j$.
\item[(2)]
Assume that $\Ext_{\mathcal{E}}^i(\mathcal{M},\mathcal{M})=0$ for every $i\in \{n+1,\cdots,2n-1\}$.
Let $k\in \{n+1,\cdots,2n-1\}$ such that $\Ext_{\mathcal{E}}^k(\mathcal{M},C^j)\neq 0$, then $k=2n-j$.
\item[(3)]
More generally let $\mathcal{M}$ be an $n\mathbb{Z}$-cluster tilting subcategory.
If $\Ext_{\mathcal{E}}^k(\mathcal{M},C^j)\neq 0$, then $k\in n\mathbb{Z}$ or $k\in n\mathbb{Z}\backslash \{j\}$.
\end{itemize}
\begin{proof}
For $j\in\{1,\cdots,n-2\}$ by applying the functor $\Hom_{\mathcal{E}}(X,-)$ to the short exact sequence
\begin{equation}
0\rightarrow C^j\rightarrow Y^{j+1}\rightarrow C^{j+1}\rightarrow 0, \notag
\end{equation}
we have the exact sequence
\begin{equation}
\Hom_{\mathcal{E}}(X,Y^{j+1})\rightarrow \Hom_{\mathcal{E}}(X,C^{j+1})\rightarrow\Ext_{\mathcal{E}}^1(X,C^{j})\rightarrow \Ext_{\mathcal{E}}^1(X,Y^{j+1})=0. \notag
\end{equation}
Since $Y^{j+1}\rightarrow C^{j+1}$ is a right $\mathcal{M}$-approximation, the first map is epimorphism. Thus $\Ext_{\mathcal{E}}^1(X,C^j)=0$ for $j\in\{1,\cdots,n-2\}$. The rest of proof is straightforward and we leave it to the reader.
\end{proof}
\end{lemma}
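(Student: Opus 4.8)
The plan is to deduce all three parts from one dimension–shifting argument run along the conflations of the splitting above. Write $C^0:=Y^0$ and $C^n:=Y^{n+1}$, so that the splitting consists of conflations $\sigma_j\colon 0\to C^{j-1}\to Y^j\to C^j\to 0$ for $1\le j\le n$, with both ends $C^0,C^n$ in $\mathcal{M}$. Applying $\Hom_{\mathcal{E}}(X,-)$ to $\sigma_j$ and using $Y^j\in\mathcal{M}$, the long exact $\Ext$–sequence produces, whenever $\Ext^k_{\mathcal{E}}(X,Y^j)=0=\Ext^{k+1}_{\mathcal{E}}(X,Y^j)$, an isomorphism $\Ext^k_{\mathcal{E}}(X,C^j)\cong\Ext^{k+1}_{\mathcal{E}}(X,C^{j-1})$; dually, from $\sigma_{j+1}$ one gets $\Ext^k_{\mathcal{E}}(X,C^j)\cong\Ext^{k-1}_{\mathcal{E}}(X,C^{j+1})$ whenever $\Ext^{k-1}_{\mathcal{E}}(X,Y^{j+1})=0=\Ext^{k}_{\mathcal{E}}(X,Y^{j+1})$. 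Both moves preserve the quantity $s:=k+j$, so all the groups $\Ext^{s-m}_{\mathcal{E}}(X,C^m)$ for $0\le m\le n$ are isomorphic as long as the intervening cohomological degrees stay inside the ``safe'' set where $\Ext^\ast_{\mathcal{E}}(X,\mathcal{M})$ is known to vanish. The $\Ext^1$–computation already in the text is exactly the lowest instance of this (it is the one shift that would otherwise land in degree $0$, which is why it needs the approximation property rather than pure rigidity).

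The routing principle is then: the two endpoint degrees are $s$ at $C^0$ and $s-n$ at $C^n$, and I would always shift towards whichever end keeps every traversed degree in the safe set, landing on a group $\Ext^{\ast}_{\mathcal{E}}(X,Y^0)$ or $\Ext^{\ast}_{\mathcal{E}}(X,Y^{n+1})$ that vanishes by rigidity. For part (1) the safe set is $\{1,\dots,n-1\}$: if $s\le n-1$ I shift up to $C^0$ (the degrees used are $[k,s]\subseteq\{1,\dots,n-1\}$) to reach $\Ext^s_{\mathcal{E}}(X,Y^0)=0$, and if $s\ge n+1$ I shift down to $C^n$ (degrees $[s-n,k]\subseteq\{1,\dots,n-1\}$) to reach $\Ext^{s-n}_{\mathcal{E}}(X,Y^{n+1})=0$; only $s=n$, i.e. $k=n-j$, survives. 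Part (2) is verbatim the same with the safe set enlarged to $\{1,\dots,n-1\}\cup\{n+1,\dots,2n-1\}$ by the extra hypothesis: upward shifts dispose of $s\le 2n-1$ and downward shifts of $s\ge 2n+1$, leaving only $s=2n$, i.e. $k=2n-j$.

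For part (3) the safe set is the complement of the nonnegative multiples of $n$. Assuming $k\notin n\mathbb{Z}$ and $s=k+j\notin n\mathbb{Z}$, I would note that the window $[s-n,s]$ of endpoint degrees contains \emph{exactly one} multiple $qn$ of $n$ (its endpoints are congruent mod $n$ and nonzero, so there is one interior multiple), and that $k\neq qn$. If $k>qn$ the interval $[k,s]$ misses $n\mathbb{Z}$, so I shift up to $C^0$ and get $\Ext^s_{\mathcal{E}}(X,Y^0)=0$; if $k<qn$ then $qn\ge n$, forcing $s-n\ge 1$, so the interval $[s-n,k]$ misses both $n\mathbb{Z}$ and $0$, and shifting down to $C^n$ gives $\Ext^{s-n}_{\mathcal{E}}(X,Y^{n+1})=0$. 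Hence $\Ext^k_{\mathcal{E}}(X,C^j)\ne 0$ forces $k\equiv 0$ or $k\equiv -j \pmod n$, which is the asserted conclusion (and specialises to parts (1) and (2)). The delicate point, and the one I would write out most carefully, is exactly this boundary bookkeeping: one must confirm that whenever the blocking multiple $qn$ sits below $k$ it really forces the upward route, so that the downward route never touches the exceptional degrees $0$ or $n$; the higher–$\Ext$ shifts themselves are formal, and it is only these degree–$0$/degree–$n$ edge cases (the ones governed by the approximation property) where anything can go wrong.
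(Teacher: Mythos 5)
Your proposal is correct and is essentially the paper's own argument: the paper's proof displays only the first instance of exactly this dimension shifting along the conflations $0\to C^{j-1}\to Y^j\to C^j\to 0$ and declares the rest straightforward, and your routing scheme (shift towards whichever end keeps all traversed degrees in the vanishing range, with the sum $k+j$ as invariant) carries out that remainder in full and correctly, including reading the conclusion of part (3) as $k\equiv 0$ or $k\equiv -j \pmod n$ (the paper's ``$k\in n\mathbb{Z}\backslash\{j\}$'' is evidently a typo for $k\in n\mathbb{Z}-j$). The only divergence is minor but worth noting: the paper's sample step establishes $\Ext_{\mathcal{E}}^1(X,C^j)=0$ for $j\le n-2$ by shifting down into degree $0$, where surjectivity of $\Hom_{\mathcal{E}}(X,Y^{j+1})\rightarrow\Hom_{\mathcal{E}}(X,C^{j+1})$ requires the right-approximation property, whereas your routing handles that case by shifting up to $\Ext_{\mathcal{E}}^{1+j}(X,Y^0)=0$ and, since your down-shifts always terminate in degree $s-n\ge 1$, never touches degree $0$ at all, so pure rigidity (respectively the $n\mathbb{Z}$-condition) suffices throughout --- a valid and slightly cleaner bookkeeping choice than the paper's.
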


Now we are ready to prove Theorem \ref{1.1}, which gives a characterization of $n\mathbb{Z}$-cluster tilting subcategories.

\begin{proof} [Proof of Theorem \ref{1.1}]
We first show that $(1)\Rightarrow (2)$.
By \cite[Proposition 2.2]{JK} we have the following exact sequence
\begin{align*}
0&\rightarrow \Hom_{\mathcal{E}}(X,Y^0)\rightarrow \Hom_{\mathcal{E}}(X,Y^1)\rightarrow\cdots\rightarrow \Hom_{\mathcal{E}}(X,Y^n)\rightarrow\Hom_{\mathcal{E}}(X,Y^{n+1}) \\
&\rightarrow \Ext_{\mathcal{E}}^n(X,Y^0)\rightarrow \Ext_{\mathcal{E}}^n(X,Y^1).
\end{align*}
Also, using the argument in the proof of \cite[Proposition 2.2]{JK}, it is not hard to see that for every positive integer $k$ we have the following exact sequence.
\begin{align*}
\Ext_{\mathcal{E}}^{(k-1)n}(X,Y^n)\rightarrow\Ext_{\mathcal{E}}^{(k-1)n}(X,Y^{n+1})
\rightarrow \Ext_{\mathcal{E}}^{kn}(X,Y^0)\rightarrow \Ext_{\mathcal{E}}^{kn}(X,Y^1).
\end{align*}
Now by looking at Diagram 4.1 and using Lemma \ref{4.5} we have the following exact sequences
\begin{align*}
&\Ext_{\mathcal{E}}^n(X,Y^0)\rightarrow \Ext_{\mathcal{E}}^n(X,Y^1)\rightarrow\Ext_{\mathcal{E}}^n(X,C^1)\rightarrow\Ext_{\mathcal{E}}^{n+1}(X,Y^0)=0, \\
&0=\Ext_{\mathcal{E}}^{n-1}(X,C^2)\rightarrow \Ext_{\mathcal{E}}^n(X,C^1)\rightarrow\Ext_{\mathcal{E}}^{n}(X,Y^2)\rightarrow\Ext_{\mathcal{E}}^{n}(X,C^2)\rightarrow \Ext_{\mathcal{E}}^{n+1}(X,C^1)=0, \\
&\vdots
\end{align*}
Splicing these short exact sequences we obtain desire long exact sequence.

Now we show that $(2)\Rightarrow (1)$. Assume on a contrary that there exist $m\in \mathbb{Z}\backslash n\mathbb{Z}$ and $M, N\in \mathcal{M}$ such that $\Ext_{\mathcal{E}}^{m}(M, N)\neq 0$. Let $t=kn+i$ be the smallest positive integer such that $k\geq 1$, $1\leq i\leq n-1$ and there exist two objects $X,Y\in \mathcal{M}$ with $\Ext_{\mathcal{E}}^{t}(X,Y)\neq 0$.
Choose a nonzero $t$-fold extension $\xi:0\rightarrow Y\rightarrow E^1\rightarrow\cdots\rightarrow E^t\rightarrow X\rightarrow 0$ in $\Ext_{\mathcal{E}}^{t}(X,Y)$. Let $E^1\rightarrowtail Y^1$ be an inflation with $X^1\in \mathcal{M}$. Embed the inflation $Y\rightarrowtail E^1\rightarrowtail Y^1$ in an $n$-exact sequence, we obtain the following commutative diagram.
\begin{center}
\begin{tikzpicture}
\node (X1) at (-7.5,0.5) {$0$};
\node (X2) at (-6,0.5) {$Y$};
\node (X3) at (-4.5,0.5) {$E^1$};
\node (X4) at (-3,0.5) {$E^2$};
\node (X5) at (-1.5,0.5) {$\cdots$};
\node (X6) at (0,0.5) {$E^{n+1}$};
\node (X7) at (1.5,0.5) {$\cdots$};
\node (X8) at (3,0.5) {$E^t$};
\node (X9) at (4.5,0.5) {$X$};
\node (X10) at (6,0.5) {$0$};
\node (X11) at (-7.5,-1) {$0$};
\node (X12) at (-6,-1) {$Y$};
\node (X13) at (-4.5,-1) {$Y^1$};
\node (X14) at (-3,-1) {$Y^2$};
\node (X15) at (-1.5,-1) {$\cdots$};
\node (X16) at (0,-1) {$Y^{n+1}$};
\node (X17) at (1.5,-1) {$0$};
%\node (X18) at (3,-1) {$0$};
\draw [->,thick] (X1) -- (X2) node [midway,left] {};
\draw [->,thick] (X2) -- (X3) node [midway,left] {};
\draw [->,thick] (X3) -- (X4) node [midway,left] {};
\draw [->,thick] (X4) -- (X5) node [midway,left] {};
\draw [->,thick] (X5) -- (X6) node [midway,left] {};
\draw [->,thick] (X6) -- (X7) node [midway,above] {};
\draw [->,thick] (X7) -- (X8) node [midway,left] {};
\draw [->,thick] (X8) -- (X9) node [midway,above] {};
\draw [->,thick] (X9) -- (X10) node [midway,above] {};
\draw [->,thick] (X11) -- (X12) node [midway,above] {};
\draw [->,thick] (X12) -- (X13) node [midway,above] {};
\draw [->,thick] (X13) -- (X14) node [midway,above] {};
\draw [->,thick] (X14) -- (X15) node [midway,above] {};
\draw [->,thick] (X15) -- (X16) node [midway,above] {};
\draw [->,thick] (X16) -- (X17) node [midway,above] {};
%\draw [->,thick] (X17) -- (X18) node [midway,above] {};
\draw [double,-,thick] (X2) -- (X12) node [midway,left] {};
\draw [->,thick] (X3) -- (X13) node [midway,left] {};
\end{tikzpicture}
\end{center}
Using the dual of Remark \ref{3.4}, we can obtain the following diagram where the top row is Yoneda equivalent to $\xi$.
\begin{center}
\begin{tikzpicture}
\node (X1) at (-7.5,0.5) {$0$};
\node (X2) at (-6,0.5) {$Y$};
\node (X3) at (-4.5,0.5) {$Y^1$};
\node (X4) at (-3,0.5) {$E^2$};
\node (X5) at (-1.5,0.5) {$\cdots$};
\node (X6) at (0,0.5) {$E^{n+1}$};
\node (X7) at (1.5,0.5) {$\cdots$};
\node (X8) at (3,0.5) {$E^t$};
\node (X9) at (4.5,0.5) {$X$};
\node (X10) at (6,0.5) {$0$};
\node (X11) at (-7.5,-1) {$0$};
\node (X12) at (-6,-1) {$Y$};
\node (X13) at (-4.5,-1) {$Y^1$};
\node (X14) at (-3,-1) {$Y^2$};
\node (X15) at (-1.5,-1) {$\cdots$};
\node (X16) at (0,-1) {$Y^{n+1}$};
\node (X17) at (1.5,-1) {$0$};
%\node (X18) at (3,-1) {$0$};
\draw [->,thick] (X1) -- (X2) node [midway,left] {};
\draw [->,thick] (X2) -- (X3) node [midway,left] {};
\draw [->,thick] (X3) -- (X4) node [midway,left] {};
\draw [->,thick] (X4) -- (X5) node [midway,left] {};
\draw [->,thick] (X5) -- (X6) node [midway,left] {};
\draw [->,thick] (X6) -- (X7) node [midway,above] {};
\draw [->,thick] (X7) -- (X8) node [midway,left] {};
\draw [->,thick] (X8) -- (X9) node [midway,above] {};
\draw [->,thick] (X9) -- (X10) node [midway,above] {};
\draw [->,thick] (X11) -- (X12) node [midway,above] {};
\draw [->,thick] (X12) -- (X13) node [midway,above] {};
\draw [->,thick] (X13) -- (X14) node [midway,above] {};
\draw [->,thick] (X14) -- (X15) node [midway,above] {};
\draw [->,thick] (X15) -- (X16) node [midway,above] {};
\draw [->,thick] (X16) -- (X17) node [midway,above] {};
%\draw [->,thick] (X17) -- (X18) node [midway,above] {};
\draw [double,-,thick] (X2) -- (X12) node [midway,left] {};
\draw [double,-,thick] (X3) -- (X13) node [midway,left] {};
\end{tikzpicture}
\end{center}
Note that in this diagram $E^2$ is differ from $E^2$ in the previous diagram, but for simplicity we use the same notation. By the minimality of $t$ it is easy to see that the sequence
\begin{align*}
0&\rightarrow \Hom_{\mathcal{E}}(X,Y^0)\rightarrow \Hom_{\mathcal{E}}(X,Y^1)\rightarrow\cdots\rightarrow \Hom_{\mathcal{E}}(X,Y^n)\rightarrow\Hom_{\mathcal{E}}(X,Y^{n+1}) \\
&\rightarrow \cdots \\
&\rightarrow \Ext_{\mathcal{E}}^{kn}(X,Y^0)\rightarrow \cdots\rightarrow \Ext_{\mathcal{E}}^{kn}(X,Y^{i-1})\rightarrow\Ext_{\mathcal{E}}^{kn}(X,Y^{i}),
\end{align*}
is exact. Now consider the Diagram 4.1 for $Y$. By using the minimality of $t$ we have
\begin{align*}
&\Ext_{\mathcal{E}}^r(X,C^1)=0, r\in \{kn+1,\cdots,kn+i-2\}\\
&\Ext_{\mathcal{E}}^r(X,C^2)=0, r\in \{kn+1,\cdots,kn+i-3\}\\
&\vdots \\
&\Ext_{\mathcal{E}}^r(X,C^{i-2})=0, r\in \{kn+1\}.
\end{align*}
Using this equations and by the dual argument of the proof of Theorem \ref{3.5} we obtain the following commutative diagram with exact rows.
\begin{center}
\begin{tikzpicture}
\node (X1) at (-7,0.5) {$0$};
\node (X2) at (-6,0.5) {$Y$};
\node (X3) at (-4.5,0.5) {$\cdots$};
\node (X4) at (-3,0.5) {$Y^{i-1}$};
\node (X5) at (-1.5,0.5) {$Y^i$};
\node (X6) at (0,0.5) {$E^{i+1}$};
\node (X7) at (1.5,0.5) {$\cdots$};
\node (X8) at (3,0.5) {$E^{n+1}$};
\node (X9) at (4.5,0.5) {$\cdots$};
\node (X10) at (6,0.5) {$E^t$};
\node (X11) at (7.5,0.5) {$X$};
\node (X12) at (8.5,0.5) {$0$};
\node (X13) at (-7,-1) {$0$};
\node (X14) at (-6,-1) {$Y$};
\node (X15) at (-4.5,-1) {$\cdots$};
\node (X16) at (-3,-1) {$Y^{i-1}$};
\node (X17) at (-1.5,-1) {$Y^i$};
\node (X18) at (0,-1) {$Y^{i+1}$};
\node (X19) at (1.5,-1) {$\cdots$};
\node (X20) at (3,-1) {$Y^{n+1}$};
\node (X21) at (4.5,-1) {$0$};
\node (X22) at (-3.75,-2) {$C^{i-2}$};
\node (X23) at (-2.25,-2) {$C^{i-1}$};
\node (X24) at (-0.75,-2) {$C^i$};
\node (X25) at (0.75,-2) {$C^{i+1}$};
\draw [->,thick] (X1) -- (X2) node [midway,left] {};
\draw [->,thick] (X2) -- (X3) node [midway,left] {};
\draw [->,thick] (X3) -- (X4) node [midway,left] {};
\draw [->,thick] (X4) -- (X5) node [midway,left] {};
\draw [->,thick] (X5) -- (X6) node [midway,left] {};
\draw [->,thick] (X6) -- (X7) node [midway,above] {};
\draw [->,thick] (X7) -- (X8) node [midway,left] {};
\draw [->,thick] (X8) -- (X9) node [midway,above] {};
\draw [->,thick] (X9) -- (X10) node [midway,above] {};
\draw [->,thick] (X10) -- (X11) node [midway,above] {};
\draw [->,thick] (X11) -- (X12) node [midway,above] {};
\draw [->,thick] (X13) -- (X14) node [midway,above] {};
\draw [->,thick] (X14) -- (X15) node [midway,above] {};
\draw [->,thick] (X15) -- (X16) node [midway,above] {};
\draw [->,thick] (X16) -- (X17) node [midway,above] {};
\draw [->,thick] (X17) -- (X18) node [midway,above] {};
\draw [->,thick] (X18) -- (X19) node [midway,above] {};
\draw [->,thick] (X19) -- (X20) node [midway,above] {};
\draw [->,thick] (X20) -- (X21) node [midway,above] {};
\draw [double,-,thick] (X2) -- (X14) node [midway,left] {};
\draw [double,-,thick] (X3) -- (X15) node [midway,left] {};
\draw [double,-,thick] (X4) -- (X16) node [midway,left] {};
\draw [double,-,thick] (X5) -- (X17) node [midway,left] {};
\draw [->>,thick] (X15) -- (X22) node [midway,above] {};
\draw [>->,thick] (X22) -- (X16) node [midway,above] {};
\draw [->>,thick] (X16) -- (X23) node [midway,above] {};
\draw [>->,thick] (X23) -- (X17) node [midway,above] {};
\draw [->>,thick] (X17) -- (X24) node [midway,above] {};
\draw [>->,thick] (X24) -- (X18) node [midway,above] {};
\draw [->>,thick] (X18) -- (X25) node [midway,above] {};
\draw [>->,thick] (X25) -- (X19) node [midway,above] {};
\end{tikzpicture}
\end{center}
For simplicity we set $f^i:Y^i\rightarrow Y^{i+1}$ as composition $g^i:Y^i\twoheadrightarrow C^i$ with $h^i:C^i\rightarrowtail Y^{i+1}$. We claim that the induced sequence
\begin{align*}
\Ext_{\mathcal{E}}^{kn}(X,Y^{i-1})\rightarrow  \Ext_{\mathcal{E}}^{kn}(X,Y^{i})\rightarrow  \Ext_{\mathcal{E}}^{kn}(X,Y^{i+1}),
\end{align*}
is not exact. First by applying $\Hom_{\mathcal{E}}(X,-)$ to the short exact sequence $C^{i-1}\rightarrowtail Y^{i}\twoheadrightarrow C^{i}$ we have the following exact sequence of abelian groups.
\begin{align*}
\Ext_{\mathcal{E}}^{kn}(X,Y^{i})\rightarrow  \Ext_{\mathcal{E}}^{kn}(X,C^{i})\overset{\theta}{\rightarrow}\Ext_{\mathcal{E}}^{kn+1}(X,C^{i-1})\rightarrow \Ext_{\mathcal{E}}^{kn+1}(X,Y^{i-1}).
\end{align*}
Now consider the nonzero element
\begin{center}
$\xi:0\rightarrow C^i\rightarrow E^{i+1}\rightarrow \cdots \rightarrow C^t\rightarrow X\rightarrow 0,$
\end{center}
in $\Ext_{\mathcal{E}}^{kn}(X,C^{i})$. By Remark \ref{3.3},
\begin{center}
$\eta :=\theta (\xi)=[0\rightarrow C^{i-1}\rightarrow Y^i\rightarrow C^i\rightarrow 0] \xi .$
\end{center}
Obviously $\Ext_{\mathcal{E}}^{kn+1}(X,h^{i-1})(\eta)=0$. Thus $\Ext_{\mathcal{E}}^{kn}(X,g^i)$ is not an epimorphism. But by applying $\Hom_{\mathcal{E}}(X,-)$ to the short exact sequence $C^{i}\rightarrowtail Y^{i+1}\twoheadrightarrow C^{i+1}$ we can see that
\begin{center}
$\Ext_{\mathcal{E}}^{kn}(X,C^i)=\Ker \big(\Ext_{\mathcal{E}}^{kn}(X,g^{i+1}) \big) \subseteq \Ker \big(\Ext_{\mathcal{E}}^{kn}(X,f^{i+1}) \big).$
\end{center}
So we have
\begin{center}
$\Imm \big(\Ext^{kn}(X,f^{i}) \big) \neq \Ker \big(\Ext^{kn}(X,f^{i+1}) \big),$
\end{center}
which gives a contradiction and the result follows.
\end{proof}

\begin{remark}\label{4.6}
Let $\mathcal{M}$ be an $n$-rigid subcategory of an exact category $\mathcal{E}$. By the proof of Theorem \ref{1.1} the following conditions are equivalent.
\begin{itemize}
\item[(1)]
For every $j\in \{1,\cdots,kn+i-1\}\backslash n\mathbb{Z}$ where $1\leq i\leq n-1$ we have $\Ext_{\mathcal{E}}^j(\mathcal{M},\mathcal{M})=0$.
\item[(2)]
For every $X\in \mathcal{M}$ and every $n$-exact sequence $Y:Y^0\rightarrow Y^1\rightarrow \cdots Y^n\rightarrow Y^{n+1}$ the following induced sequence of abelian groups is exact.
\begin{align*}
0&\rightarrow \Hom_{\mathcal{E}}(X,Y^0)\rightarrow \Hom_{\mathcal{E}}(X,Y^1)\rightarrow\cdots\rightarrow \Hom_{\mathcal{E}}(X,Y^n)\rightarrow\Hom_{\mathcal{E}}(X,Y^{n+1}) \\
&\rightarrow \cdots \\
&\rightarrow \Ext_{\mathcal{E}}^{kn}(X,Y^0)\rightarrow \cdots\rightarrow \Ext_{\mathcal{E}}^{kn}(X,Y^{i-1})\rightarrow\Ext_{\mathcal{E}}^{kn}(X,Y^{i}).
\end{align*}
\end{itemize}
\end{remark}

\section{$n\mathbb{Z}$-abelian and $n\mathbb{Z}$-exact categories}
In this section, inspired by the results of the previous section, we define $n\mathbb{Z}$-abelian and $n\mathbb{Z}$-exact categories and show that they are axiomatisation of $n\mathbb{Z}$-cluster tilting subcategories of abelian and exact categories, respectively.

Let $\mathcal{M}$ be an $n$-exact category. For an object $M\in \mathcal{M}$ we denote the diagonal and codiagonal map by
\begin{align*}
&\Delta_M=\begin{pmatrix}
1\\
1
\end{pmatrix}:M\longrightarrow M\oplus M\\
and\\
&\nabla_M=\begin{pmatrix}
1 & 1
\end{pmatrix}:M\oplus M\longrightarrow  M,
\end{align*}
respectively. We writ $\Delta$ and $\nabla$ when $M$ is clear from the context. Now let $L,N\in \mathcal{M}$. An $n$-exact sequence
\begin{center}
$\xi:0\rightarrow L\rightarrow M^1\rightarrow M^2\rightarrow \cdots\rightarrow M^n\rightarrow N\rightarrow 0$
\end{center}
is called an $n$-extension of $N$ by $L$. Let $f:L\rightarrow L'$ be an arbitrary morphism. By taking $n$-pushout along $f$
we obtain the following morphism between $n$-exact sequences.
\begin{center}
\begin{tikzpicture}
%\node (X1) at (-8,1) {$0$};
%\node (X2) at (-6.5,1) {$X^0$};
\node (X3) at (-5,1) {$0$};
\node (X4) at (-3.5,1) {$L$};
\node (X5) at (-1.75,1) {$M^1$};
\node (X6) at (0,1) {$\cdots$};
\node (X7) at (1.5,1) {$M^n$};
\node (X8) at (3,1) {$N$};
\node (X9) at (4.5,1) {$0$};
\node (X10) at (-5,-0.5) {$0$};
\node (X11) at (-3.5,-0.5) {$L'$};
\node (X12) at (-1.75,-0.5) {$W^1$};
\node (X13) at (0,-0.5) {$\cdots$};
\node (X14) at (1.5,-0.5) {$W^n$};
\node (X15) at (3,-0.5) {$N$};
\node (X16) at (4.5,-0.5) {$0.$};
%\draw [->,thick] (X1) -- (X2) node [midway,left] {};
%\draw [->,thick] (X2) -- (X3) node [midway,left] {};
\draw [->,thick] (X3) -- (X4) node [midway,left] {};
\draw [->,thick] (X4) -- (X5) node [midway,left] {};
\draw [->,thick] (X5) -- (X6) node [midway,left] {};
\draw [->,thick] (X6) -- (X7) node [midway,above] {};
\draw [->,thick] (X7) -- (X8) node [midway,left] {};
\draw [->,thick] (X8) -- (X9) node [midway,above] {};
\draw [->,thick] (X10) -- (X11) node [midway,left] {};
\draw [->,thick] (X11) -- (X12) node [midway,above] {};
\draw [->,thick] (X12) -- (X13) node [midway,above] {};
\draw [->,thick] (X13) -- (X14) node [midway,above] {};
\draw [->,thick] (X14) -- (X15) node [midway,above] {};
\draw [->,thick] (X15) -- (X16) node [midway,above] {};
\draw [->,thick] (X4) -- (X11) node [midway,left] {$f$};
\draw [->,thick] (X5) -- (X12) node [midway,above] {};
\draw [->,thick] (X7) -- (X14) node [midway,above] {};
\draw [double,-,thick] (X8) -- (X15) node [midway,above] {};
\end{tikzpicture}
\end{center}
The bottom row is denoted by $f\xi$. For a morphism $g:N'\rightarrow N$, $\xi g$ is defined dually by taking $n$-pullback along $g$.

For another $n$-extension of $N$ by $L$ like
\begin{equation}
\xi':0\rightarrow L\rightarrow M'^1\rightarrow M'^2\rightarrow \cdots\rightarrow M'^n\rightarrow N\rightarrow 0\notag
\end{equation}
their Baer sum is defined as $\nabla (\xi\oplus\xi')\Delta$. Two $n$-extensions of $N$ by $L$ are said to be Yoneda equivalent if there is a morphism of $n$-exact sequences with identity end terms, from one to another. By \cite[Proposition 4.10 ]{J} this is an equivalence relation and we denote by $\nExt_{\mathcal{M}}^1(M,L)$ the set of Yoneda equivalence classes of $n$-extension of $N$ by $L$. This is an abelian group (ignoring set theoretical difficulties) by the Baer sum operation. Also for a positive integer $k$, $k$-fold $n$-extension is defined similarly. Indeed a $k$-fold $n$-extension is splicing of $k$, $n$-extensions. The Yoneda equivalence classes of $k$-fold $n$-extension of $N$ by $L$ is denoted by $\nExt_{\mathcal{M}}^k(M,L)$ and it is an abelian group with Baer sum operation (for more details see \cite[Section 5.2]{Lu}).

Now let $\mathcal{M}$ be an $n$-exact category and
\begin{equation}\label{nexact}
0\rightarrow L\rightarrow M^1\rightarrow M^2\rightarrow \cdots\rightarrow M^n\rightarrow N\rightarrow 0
\end{equation}
be an $n$-exact sequence in $\mathcal{M}$. For every object $X\in \mathcal{M}$ there is the following induced sequence of abelian groups.
\begin{align}\label{long}
0&\rightarrow \Hom_{\mathcal{M}}(X,L)\rightarrow \Hom_{\mathcal{M}}(X,M^1)\rightarrow\cdots\rightarrow \Hom_{\mathcal{M}}(X,M^n)\rightarrow\Hom_{\mathcal{M}}(X,N) \notag\\
&\rightarrow \nExt_{\mathcal{M}}^1(X,L)\rightarrow \nExt_{\mathcal{M}}^1(X,M^1)\rightarrow\cdots\rightarrow \nExt_{\mathcal{M}}^1(X,M^n)\rightarrow \nExt_{\mathcal{M}}^1(X,N)\notag \\
&\rightarrow \nExt_{\mathcal{M}}^2(X,L)\rightarrow \nExt_{\mathcal{M}}^{2}(X,M^1)\rightarrow\cdots\rightarrow \nExt_{\mathcal{M}}^{2}(X,M^n)\rightarrow \nExt_{\mathcal{M}}^{2}(X,N)\notag \\
&\rightarrow \cdots
\end{align}

Dually, we have the following induced sequence of abelian groups.
\begin{align}\label{long1}
0&\rightarrow \Hom_{\mathcal{M}}(N,X)\rightarrow \Hom_{\mathcal{M}}(M^n,X)\rightarrow\cdots\rightarrow \Hom_{\mathcal{M}}(M^1,X)\rightarrow\Hom_{\mathcal{M}}(L,X) \notag\\
&\rightarrow \nExt_{\mathcal{M}}^1(N,X)\rightarrow \nExt_{\mathcal{M}}^1(M^n,X)\rightarrow\cdots\rightarrow \nExt_{\mathcal{M}}^1(M^1,X)\rightarrow \nExt_{\mathcal{M}}^1(L,X)\notag \\
&\rightarrow \nExt_{\mathcal{M}}^2(N,X)\rightarrow \nExt_{\mathcal{M}}^{2}(M^n,X)\rightarrow\cdots\rightarrow \nExt_{\mathcal{M}}^{2}(M^1,X)\rightarrow \nExt_{\mathcal{M}}^{2}(L,X)\notag \\
&\rightarrow \cdots
\end{align}

It is natural to ask about the exactness of these sequences. The following theorem gives the answer.

\begin{theorem}\label{5.1}
Let $\mathcal{M}$ be an $n$-exact category realized as an $n$-cluster tilting subcategory of an exact category $\mathcal{E}$. Then the following conditions are equivalent.
\begin{itemize}
\item[(1)]
$\mathcal{M}$ is an $n\mathbb{Z}$-cluster tilting subcategory of $\mathcal{E}$.
\item[(2)]
For every $n$-exact sequence like \eqref{nexact} and every $X\in \mathcal{M}$ the induced sequence of abelian groups \eqref{long} is exact.
\item[(3)]
For every $n$-exact sequence like \eqref{nexact} and every $X\in \mathcal{M}$ the induced sequence of abelian groups \eqref{long1} is exact.
\end{itemize}
\begin{proof}
We only prove that $(1)$ and $(2)$ are equivalent. The equivalence of $(1)$ and $(3)$ is dual.

Because $\mathcal{M}$ is an $n\mathbb{Z}$-cluster tilting subcategory, by Theorem \ref{4.3} we have $\Ext_{\mathcal{E}}^{kn}(X,Y)\cong \nExt_{\mathcal{M}}^k(X,Y)$. Thus $(1)\Rightarrow (2)$ follows from Theorem \ref{1.1}.

$(2)\Rightarrow (1)$. By Theorem \ref{4.2}, $\Ext_{\mathcal{E}}^{n}(X,Y)\cong \nExt_{\mathcal{M}}^1(X,Y)$ and hence we have the following exact sequence of abelian groups.
\begin{align*}
0&\rightarrow \Hom_{\mathcal{E}}(X,L)\rightarrow \Hom_{\mathcal{E}}(X,M^1)\rightarrow\cdots\rightarrow \Hom_{\mathcal{E}}(X,M^n)\rightarrow\Hom_{\mathcal{E}}(X,M^{n+1}) \\
&\rightarrow \Ext_{\mathcal{E}}^n(X,L)\rightarrow \Ext_{\mathcal{E}}^n(X,M^1)\rightarrow\cdots\rightarrow \Ext_{\mathcal{E}}^n(X,M^n)\rightarrow \Ext_{\mathcal{E}}^n(X,M^{n+1}).
\end{align*}
Therefor by Remark \ref{4.6}, for every $k\in \{n+1,\cdots,2n-1\}$ we have $\Ext_{\mathcal{E}}^k(\mathcal{M},\mathcal{M})=0$. But this implies that $\Ext_{\mathcal{E}}^{2n}(X,Y)\cong \nExt_{\mathcal{M}}^2(X,Y)$ by Remark \ref{4.4}. So we have the following exact sequence
\begin{align*}
0&\rightarrow \Hom_{\mathcal{E}}(X,L)\rightarrow \Hom_{\mathcal{E}}(X,M^1)\rightarrow\cdots\rightarrow \Hom_{\mathcal{E}}(X,M^n)\rightarrow\Hom_{\mathcal{E}}(X,M^{n+1}) \\
&\rightarrow \Ext_{\mathcal{E}}^n(X,L)\rightarrow \Ext_{\mathcal{E}}^n(X,M^1)\rightarrow\cdots\rightarrow \Ext_{\mathcal{E}}^n(X,M^n)\rightarrow \Ext_{\mathcal{E}}^n(X,M^{n+1})\\
&\rightarrow \Ext_{\mathcal{E}}^{2n}(X,L)\rightarrow \Ext_{\mathcal{E}}^{2n}(X,M^1)\rightarrow\cdots\rightarrow \Ext_{\mathcal{E}}^{2n}(X,M^n)\rightarrow \Ext_{\mathcal{E}}^{2n}(X,M^{n+1}).
\end{align*}
The result follows by repeating this argument.
\end{proof}
\end{theorem}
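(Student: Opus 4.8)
The plan is to prove the equivalence $(1)\Leftrightarrow(2)$ and to obtain $(1)\Leftrightarrow(3)$ by a dual argument: passing to $\mathcal{E}^{\mathrm{op}}$ turns an $n$-cluster tilting (resp. $n\mathbb{Z}$-cluster tilting) subcategory into one of the same type, and the sequence \eqref{long1} for $\mathcal{M}$ is exactly the sequence \eqref{long} for $\mathcal{M}^{\mathrm{op}}$, so nothing new is needed there. The two identifications I will rely on throughout are $\Ext_{\mathcal{E}}^{n}(X,Y)\cong\nExt_{\mathcal{M}}^{1}(X,Y)$, supplied unconditionally by Theorem \ref{4.2}, and $\Ext_{\mathcal{E}}^{kn}(X,Y)\cong\nExt_{\mathcal{M}}^{k}(X,Y)$, supplied by Theorem \ref{4.3} (equivalently Remark \ref{4.4}) once enough intermediate vanishing of $\Ext$ has been established.

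For $(1)\Rightarrow(2)$ the work is essentially already done by the earlier sections. Assuming $\mathcal{M}$ is $n\mathbb{Z}$-cluster tilting, Theorem \ref{4.3} gives $\Ext_{\mathcal{E}}^{kn}(X,Y)\cong\nExt_{\mathcal{M}}^{k}(X,Y)$ for every $k\geq 1$, and under this identification the sequence \eqref{long} becomes verbatim the long exact sequence of Theorem \ref{1.1}(2). Hence \eqref{long} is exact and $(2)$ holds. One only needs to check that the connecting maps built from Baer sum and splicing of $n$-extensions correspond to the Yoneda connecting maps in $\Ext_{\mathcal{E}}^{*}$, which is immediate from the way these isomorphisms are constructed.

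The substantive direction is $(2)\Rightarrow(1)$, which I will establish by an induction that bootstraps the isomorphism $\nExt_{\mathcal{M}}^{k}\cong\Ext_{\mathcal{E}}^{kn}$ degree by degree. In the base step I use Theorem \ref{4.2} to rewrite the low-degree part of \eqref{long} with $\Ext_{\mathcal{E}}^{n}$ in place of $\nExt_{\mathcal{M}}^{1}$, obtaining exactness of the truncated complex running from $\Hom_{\mathcal{E}}(X,L)$ through the entire $\Ext_{\mathcal{E}}^{n}$-row; by Remark \ref{4.6} this exactness is equivalent to $\Ext_{\mathcal{E}}^{k}(\mathcal{M},\mathcal{M})=0$ for all $k\in\{n+1,\dots,2n-1\}$. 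That vanishing then triggers Remark \ref{4.4}, upgrading the identification to $\Ext_{\mathcal{E}}^{2n}(X,Y)\cong\nExt_{\mathcal{M}}^{2}(X,Y)$, so that exactness of the next block of \eqref{long} reads as exactness of the $\Ext_{\mathcal{E}}$-sequence through the $\Ext_{\mathcal{E}}^{2n}$-row, forcing $\Ext_{\mathcal{E}}^{k}(\mathcal{M},\mathcal{M})=0$ for $k\in\{2n+1,\dots,3n-1\}$. Iterating, at the $k$-th stage the vanishing already secured in degrees below $kn$ lets me invoke Remark \ref{4.4} to identify $\nExt_{\mathcal{M}}^{k}$ with $\Ext_{\mathcal{E}}^{kn}$, read off the corresponding exactness from \eqref{long}, and apply Remark \ref{4.6} to kill $\Ext_{\mathcal{E}}$ in degrees $\{kn+1,\dots,(k+1)n-1\}$. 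Running this over all $k$ gives $\Ext_{\mathcal{E}}^{m}(\mathcal{M},\mathcal{M})=0$ for every $m\notin n\mathbb{Z}$, which is precisely axiom (iv), so $\mathcal{M}$ is $n\mathbb{Z}$-cluster tilting.

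The main obstacle I anticipate is keeping the two bookkeeping devices synchronised at each inductive step. The isomorphism $\nExt_{\mathcal{M}}^{k}\cong\Ext_{\mathcal{E}}^{kn}$ from Remark \ref{4.4} is only legitimate after the vanishing in degrees $\{1,\dots,kn-1\}\setminus n\mathbb{Z}$ has been secured, whereas the exactness extracted from \eqref{long} via Remark \ref{4.6} is exactly what secures the next band of vanishing; I must ensure neither input is invoked before it has been justified, so that the induction is genuinely well-founded, and that the degree ranges appearing in Remarks \ref{4.4} and \ref{4.6} line up with the precise portion of \eqref{long} being used at that stage. Once this scaffolding is correctly arranged, the remaining homological algebra is routine.
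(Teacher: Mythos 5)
Your proposal is correct and follows essentially the same route as the paper's own proof: the identification $\Ext_{\mathcal{E}}^{kn}\cong\nExt_{\mathcal{M}}^{k}$ via Theorem \ref{4.3} combined with Theorem \ref{1.1} for $(1)\Rightarrow(2)$, and for $(2)\Rightarrow(1)$ the same bootstrapping between Remark \ref{4.6} (extracting vanishing of $\Ext_{\mathcal{E}}$ in the band $\{kn+1,\dots,(k+1)n-1\}$) and Remark \ref{4.4} (upgrading the identification to the next degree), with $(1)\Leftrightarrow(3)$ by duality. Your explicit articulation of the inductive synchronisation merely spells out what the paper compresses into ``the result follows by repeating this argument.''
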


Motivated by Theorem \ref{5.1} we give the following definition as axiomatisation of $n\mathbb{Z}$-cluster tilting subcategories.

\begin{definition}\label{5.2}
Let $\mathcal{M}$ be an $n$-exact (resp. $n$-abelian) category. We say that $\mathcal{M}$ is an $n\mathbb{Z}$-exact (resp. $n\mathbb{Z}$-abelian) category if it satisfies the following two conditions.
\begin{itemize}
\item[(1)]
For every $n$-exact sequence like \eqref{nexact} and every $X\in \mathcal{M}$ the induced sequence of abelian groups \eqref{long} is exact.
\item[(2)]
For every $n$-exact sequence like \eqref{nexact} and every $X\in \mathcal{M}$ the induced sequence of abelian groups \eqref{long1} is exact.
\end{itemize}
\end{definition}

Let $\mathcal{M}$ be a small $n$-abelian category. A functor $F\in \Mod(\mathcal{M})$ is called {\em finitely presented} or {\em coherent}, if there exists an exact sequence of the form
\begin{center}
$\Hom_\mathcal{M}(-,X)\overset{\Hom_\mathcal{M}(-,f)}{\longrightarrow} \Hom_\mathcal{M}(-,Y)\rightarrow F\rightarrow 0.$
\end{center}
We denote by $\modd(\mathcal{M})$ the full subcategory of $\Mod(\mathcal{M})$ consist of all finitely presented functors. Since every morphism in $\mathcal{M}$ has a weak kernel, $\rm{mod}(\mathcal{M})$ is an abelian category \cite[Theorem 1.4]{Fr2}.

A functor $F\in \modd(\mathcal{M})$ is called {\em effaceable}, if there is an exact sequence
\begin{center}
$\Hom_\mathcal{M}(-,X)\overset{\Hom_\mathcal{M}(-,f)}{\longrightarrow} \Hom_\mathcal{M}(-,Y)\rightarrow F\rightarrow 0,$
\end{center}
for some epimorphism $f:X\rightarrow Y$. The full subcategory of effaceable functors is denoted by $\eff(\mathcal{M})$.

\begin{corollary}
Let $\mathcal{M}$ be a small $n$-abelian category. Then the following statements are equivalent.
\begin{itemize}
\item[(1)]
$\mathcal{M}$ is equivalent to an $n\mathbb{Z}$-cluster tilting subcategory of $\dfrac{\modd(\mathcal{M})}{\eff(\mathcal{M})}$.
\item[(2)]
$\mathcal{M}$ is an $n\mathbb{Z}$-abelian category.
\end{itemize}
\begin{proof}
By \cite{EN2,Kv} $\mathcal{M}$ is equivalent to an $n$-cluster tilting subcategory of $\dfrac{\modd(\mathcal{M})}{\eff(\mathcal{M})}$. Thus the result follows from Theorem \ref{5.1}.
\end{proof}
\end{corollary}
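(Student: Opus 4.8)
The plan is to reduce the statement directly to Theorem~\ref{5.1}. First I would invoke the realization theorem of \cite{EN2, Kv} (the refinement of Theorem~\ref{2.6} recorded above): every small $n$-abelian category $\mathcal{M}$ is equivalent, \emph{as an $n$-abelian category}, to an $n$-cluster tilting subcategory of the abelian quotient $\frac{\modd(\mathcal{M})}{\eff(\mathcal{M})}$. Regarding this abelian category as an exact category with its class of all short exact sequences, we are then precisely in the setting of Theorem~\ref{5.1}, with $\mathcal{E}=\frac{\modd(\mathcal{M})}{\eff(\mathcal{M})}$.

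With this identification in hand, the equivalence becomes a matter of matching up the conditions of Theorem~\ref{5.1} with the two statements of the corollary. Condition $(1)$ of the corollary---that $\mathcal{M}$ be an $n\mathbb{Z}$-cluster tilting subcategory of $\frac{\modd(\mathcal{M})}{\eff(\mathcal{M})}$---is verbatim condition $(1)$ of Theorem~\ref{5.1}. On the other hand, unwinding Definition~\ref{5.2}, condition $(2)$ of the corollary---that $\mathcal{M}$ be $n\mathbb{Z}$-abelian---asserts exactly that both induced long sequences \eqref{long} and \eqref{long1} are exact for every $n$-exact sequence \eqref{nexact} and every $X\in\mathcal{M}$; these are conditions $(2)$ and $(3)$ of Theorem~\ref{5.1}. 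Since that theorem establishes $(1)\Leftrightarrow(2)\Leftrightarrow(3)$, the two conditions of the corollary are equivalent, and I would conclude directly.

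The one genuine point to verify, and where I expect the real content to lie, is the compatibility of structures at the first step: one must be sure that the $n$-abelian structure $\mathcal{M}$ carries intrinsically agrees with the $n$-exact structure it inherits as an $n$-cluster tilting subcategory of $\frac{\modd(\mathcal{M})}{\eff(\mathcal{M})}$, so that the $n$-exact sequences appearing in Definition~\ref{5.2} are exactly those governed by Theorem~\ref{5.1}. This is precisely what the equivalence of \cite{EN2, Kv} guarantees, so no gap arises, but it is the hinge on which the argument turns. A secondary point worth a remark is that Definition~\ref{5.2} demands exactness of \emph{both} \eqref{long} and \eqref{long1}, whereas Theorem~\ref{5.1} shows that either one alone is already equivalent to being $n\mathbb{Z}$-cluster tilting; thus the two-sided requirement introduces no extra strength, and the implication passes cleanly in both directions.
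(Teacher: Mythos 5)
Your proposal is correct and follows the paper's own proof exactly: invoke the realization result of \cite{EN2, Kv} to identify $\mathcal{M}$ with an $n$-cluster tilting subcategory of $\frac{\modd(\mathcal{M})}{\eff(\mathcal{M})}$, then apply Theorem~\ref{5.1}. Your additional remarks---that the intrinsic $n$-abelian structure must match the inherited one, and that either of \eqref{long} or \eqref{long1} alone suffices since Theorem~\ref{5.1} makes each separately equivalent to condition $(1)$---are accurate elaborations of points the paper leaves implicit, not a different argument.
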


By Theorem \ref{1.1}, for $n$-cluster tilting subcategories the conditions $(1)$ and $(2)$ of Definition \ref{5.2} are equivalent. So it is natural to ask about the equivalence of these conditions for all $n$-exact categories. By the following proposition for any small $n$-exact categories these conditions are equivalent.

\begin{proposition}
Let $\mathcal{M}$ be a small $n$-exact category. If we take $\mathcal{M}$ as a subcategory of $\mathcal{L}_1(\mathcal{M})$ (by Gabriel-Quillen embedding), then the following statements are equivalent.
\begin{itemize}
\item[(1)]
$\mathcal{M}$ is an $n\mathbb{Z}$-exact category.
\item[(2)]
 For every $X\in \mathcal{M}$ and every exact sequence $Y^0\rightarrow Y^1\rightarrow \cdots Y^n\rightarrow Y^{n+1}$ in $\mathcal{L}_1(\mathcal{M})$ with terms in $\mathcal{M}$ the the following induced sequence of abelian groups is exact.
\begin{align*}
0&\rightarrow \Hom_{\mathcal{L}_1(\mathcal{M})}(X,Y^0)\rightarrow \Hom_{\mathcal{L}_1(\mathcal{M})}(X,Y^1)\rightarrow\cdots\rightarrow \Hom_{\mathcal{L}_1(\mathcal{M})}(X,Y^n)\rightarrow\Hom_{\mathcal{L}_1(\mathcal{M})}(X,Y^{n+1}) \\
&\rightarrow \Ext_{\mathcal{L}_1(\mathcal{M})}^n(X,Y^0)\rightarrow \Ext_{\mathcal{L}_1(\mathcal{M})}^n(X,Y^1)\rightarrow\cdots\rightarrow \Ext_{\mathcal{L}_1(\mathcal{M})}^n(X,Y^n)\rightarrow\Ext_{\mathcal{L}_1(\mathcal{M})}^n(X,Y^{n+1}) \\
&\rightarrow \Ext_{\mathcal{L}_1(\mathcal{M})}^{2n}(X,Y^0)\rightarrow \Ext_{\mathcal{L}_1(\mathcal{M})}^{2n}(X,Y^1)\rightarrow\cdots\rightarrow \Ext_{\mathcal{L}_1(\mathcal{M})}^{2n}(X,Y^n)\rightarrow\Ext_{\mathcal{L}_1(\mathcal{M})}^{2n}(X,Y^{n+1}) \\
&\rightarrow \cdots
\end{align*}
\item[(3)]
 For every $X\in \mathcal{M}$ and every exact sequence $Y^0\rightarrow Y^1\rightarrow \cdots Y^n\rightarrow Y^{n+1}$ in $\mathcal{L}_1(\mathcal{M})$ with terms in $\mathcal{M}$ the the following induced sequence of abelian groups is exact.
\begin{align*}
0&\rightarrow \Hom_{\mathcal{L}_1(\mathcal{M})}(Y^{n+1},X)\rightarrow \Hom_{\mathcal{L}_1(\mathcal{M})}(Y^n,X)\rightarrow\cdots\rightarrow \Hom_{\mathcal{L}_1(\mathcal{M})}(Y^1,X)\rightarrow\Hom_{\mathcal{L}_1(\mathcal{M})}(Y^{0},X) \\
&\rightarrow \Ext_{\mathcal{L}_1(\mathcal{M})}^n(Y^{n+1},X)\rightarrow \Ext_{\mathcal{L}_1(\mathcal{M})}^n(Y^n,X)\rightarrow\cdots\rightarrow \Ext_{\mathcal{L}_1(\mathcal{M})}^n(Y^1,X)\rightarrow\Ext_{\mathcal{L}_1(\mathcal{M})}^n(Y^{0},X) \\
&\rightarrow \Ext_{\mathcal{L}_1(\mathcal{M})}^{2n}(Y^{n+1},X)\rightarrow \Ext_{\mathcal{L}_1(\mathcal{M})}^{2n}(Y^n,X)\rightarrow\cdots\rightarrow \Ext_{\mathcal{L}_1(\mathcal{M})}^{2n}(Y^1,X)\rightarrow\Ext_{\mathcal{L}_1(\mathcal{M})}^{2n}(Y^{0},X) \\
&\rightarrow \cdots
\end{align*}
\end{itemize}
\begin{proof}
The proof is similar to the proof of Theorem \ref{5.1} and is left to the reader.
\end{proof}
\end{proposition}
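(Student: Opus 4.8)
The plan is to run the proof in parallel with that of Theorem \ref{5.1}, replacing the ambient exact category $\mathcal{E}$ by the Grothendieck (hence abelian, hence exact) category $\mathcal{L}_1(\mathcal{M})$, and replacing the cluster-tilting inputs (Theorems \ref{4.2}, \ref{4.3} and \ref{1.1}) by their Gabriel--Quillen counterparts. Throughout I identify $\mathcal{M}$ with its essential image under $H$, which is $n$-rigid in $\mathcal{L}_1(\mathcal{M})$ by Proposition \ref{3.1}. The first thing I would record is the dictionary between the two kinds of data in the statement: by Proposition \ref{3.1}(i) and Proposition \ref{3.2}, a complex with terms in $\mathcal{M}$ is (the image of) an $n$-exact sequence \eqref{nexact} in $\mathcal{M}$ if and only if $0\to H_{Y^0}\to\cdots\to H_{Y^{n+1}}\to 0$ is exact in $\mathcal{L}_1(\mathcal{M})$. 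Hence the sequences quantified over in (2) and (3) are exactly the images of the $n$-exact sequences quantified over in Definition \ref{5.2}, and I will prove $(1)\Leftrightarrow(2)$, the equivalence $(1)\Leftrightarrow(3)$ being dual.

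The core of the argument is that each of (1) and (2) is equivalent to the single vanishing condition
\[
(\ast)\qquad \Ext_{\mathcal{L}_1(\mathcal{M})}^{j}(\mathcal{M},\mathcal{M})=0 \quad \text{for all } j\notin n\mathbb{Z}.
\]
The equivalence $(2)\Leftrightarrow(\ast)$ I would obtain directly from Remark \ref{4.6}, applied to the $n$-rigid subcategory $\mathcal{M}\subseteq\mathcal{L}_1(\mathcal{M})$: for each pair $(k,i)$ the exactness of the corresponding truncation of the long exact sequence encodes the vanishing in the range $\{1,\dots,kn+i-1\}\setminus n\mathbb{Z}$, and letting $(k,i)$ range over all admissible values assembles these truncations into the full long exact sequence of (2) and the vanishing into $(\ast)$.

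For $(1)\Leftrightarrow(\ast)$ I would argue in two steps. Assuming $(\ast)$, Remark \ref{4.4} (applied for every $k$) yields isomorphisms $\nExt_{\mathcal{M}}^{k}(X,Y)\cong\Ext_{\mathcal{L}_1(\mathcal{M})}^{kn}(X,Y)$, under which \eqref{long} is carried to the covariant $\Ext^{kn}$-sequence of (2) and \eqref{long1} to the contravariant one of (3); since $(\ast)\Leftrightarrow(2)$, both are exact, so $\mathcal{M}$ is $n\mathbb{Z}$-exact. Conversely, assuming \eqref{long} exact, I would bootstrap exactly as in Theorem \ref{5.1}: the isomorphism $\nExt_{\mathcal{M}}^{1}\cong\Ext_{\mathcal{L}_1(\mathcal{M})}^{n}$ supplied unconditionally by Theorem \ref{3.5} and Proposition \ref{3.6} turns the first block of \eqref{long} into an exact $\Ext^{n}$-sequence, whence Remark \ref{4.6} forces $\Ext^{j}(\mathcal{M},\mathcal{M})=0$ for $j\in\{n+1,\dots,2n-1\}$; Remark \ref{4.4} then upgrades the identification to $\nExt^{2}\cong\Ext^{2n}$, exactness of the next block of \eqref{long} forces the next range of vanishing, and induction produces $(\ast)$. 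The equivalences involving (3) are handled by the dual of this argument.

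The main obstacle I anticipate is not the bookkeeping of indices but the naturality of the comparison $\nExt^{k}\cong\Ext^{kn}$: one must check that the isomorphisms of Remark \ref{4.4} intertwine the maps of \eqref{long} and \eqref{long1} (the horizontal maps $f\xi$ and $\xi g$ induced by pushout and pullback along \eqref{nexact}, and the vertical connecting maps given by splicing) with the corresponding maps of the $\Ext^{kn}$-sequences in $\mathcal{L}_1(\mathcal{M})$; this is exactly where the splicing description of connecting homomorphisms in Remark \ref{3.3} and the compatibility of $f\xi,\ \xi g$ with $H$ must be invoked, as in the passage from Theorem \ref{4.3} to Theorem \ref{5.1}. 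A secondary point to treat with care is the staging of the converse bootstrapping: since $\nExt^{k}\cong\Ext^{kn}$ becomes available only after the vanishing up to degree $kn-1$ has been established, the induction must be run block by block rather than invoked globally at the outset.
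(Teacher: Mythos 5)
Your proposal is correct and follows essentially the same route as the paper: the paper's proof is literally a pointer to the proof of Theorem \ref{5.1}, and your transport of that proof into $\mathcal{L}_1(\mathcal{M})$ --- substituting Theorem \ref{3.5} with Proposition \ref{3.6} for Theorem \ref{4.2}, Remark \ref{4.4} for Theorem \ref{4.3}, and Remark \ref{4.6} for Theorem \ref{1.1}, with Propositions \ref{3.1} and \ref{3.2} supplying the dictionary identifying the sequences quantified over in (2) and (3) with images of $n$-exact sequences --- is exactly the intended substitution. Your reorganization through the pivot condition $(\ast)$, and your flagging of the naturality of the comparison $\nExt^{k}\cong\Ext^{kn}_{\mathcal{L}_1(\mathcal{M})}$ and of the block-by-block staging of the converse bootstrap, are refinements of (not departures from) the argument of Theorem \ref{5.1}.
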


\section*{acknowledgements}
The research of first author was in part supported by a grant from IPM. The research of the second author was in part supported by a grant from IPM (No. 1400170417).


\begin{thebibliography}{10}

\bibitem{AR} \textsc{M. Auslander and I. Reiten}, Applications of contravariantly finite subcategories, \emph{Adv. Math.}, \textbf{86}(1) (1991), 111--152.

\bibitem{Bu} \textsc{T. B{\"u}hler}, Exact categories, \emph{Expo. Math.}, \textbf{28}(1) (2010), 1--69.

\bibitem{E} \textsc{R. Ebrahimi}, Gabriel-Quillen embedding for n-exact categories, \emph{Comm. Algebra}, \textbf{49}(12) (2021), 5171--5180.

\bibitem{EN2} \textsc{R. Ebrahimi, A. Nasr-Isfahani}, Higher Auslander correspondence for exact categories, (2021), preprint, arXiv:2108.13645v1.

\bibitem{EN} \textsc{R. Ebrahimi, A. Nasr-Isfahani}, Higher Auslander's formula, Int. Math. Res. Not. (2020), https://doi .org /10 .1093 /imrn /rnab219, arXiv :2006 .06472v2, in press.

\bibitem{Fr} \textsc{P. Freyd}, \emph{Abelian categories}, Harper and Row, New York, 1964.

\bibitem{Fr2} \textsc{P. Freyd}, Representations in abelian categories, In \emph{Proceedings Conference Categorical Algebra}, La Jolla, CA, 1965, Springer, New York, 1966, pp. 95--120.

\bibitem{G} \textsc{P. Gabriel}, Des cat\'egories ab\'eliennes, \emph{Bull. Soc. Math. France}, \textbf{90} (1962), 323--448.

\bibitem{I2} \textsc{O. Iyama}, Auslander correspondence, \emph{Adv. Math.}, \textbf{210}(1) (2007), 51--82.

\bibitem{I3} \textsc{O. Iyama}, Auslander-Reiten theory revisited, In \emph{Trends in Representation Theory
of Algebras and Related Topics}, {} (2008), 349--398.

\bibitem{I4} \textsc{O. Iyama}, Cluster tilting for higher Auslander algebras, \emph{Adv. Math.}, \textbf{226} (2011), 1--61.

\bibitem{I1} \textsc{O. Iyama}, Higher-dimensional Auslander-Reiten theory on maximal orthogonal subcategories, \emph{Adv. Math.}, \textbf{210}(1) (2007), 22--50.

\bibitem{IJ} \textsc{O. Iyama and G. Jasso}, Higher Auslander Correspondence for Dualizing R-Varieties, \emph{Algebr. Represent. Theory}, \textbf{20}(2) (2016), 1--20.

\bibitem{J} \textsc{G. Jasso}, n-Abelian and n-exact categories, \emph{Math. Z.}, (2016), 1--57.

\bibitem{JK} \textsc{G. Jasso, S. Kvamme}, An introduction to higher Auslander-Reiten theory, \emph{Bull. Lond. Math. Soc.}, \textbf{51}(1) (2019), 1--24.

\bibitem{KrB} \textsc{H. Krause}, \emph{Homological Theory of Representations}, Cambridge studies in advanced mathematics \textbf{195}, Cambridge University Press, 2021.

\bibitem{Kv} \textsc{S. Kvamme}, Axiomatizing subcategories of abelian categories, \emph{J. Pure Appl. Algebra}, \textbf{226}(4) (2022), 106862.

\bibitem{L} \textsc{Z. Lin}, Right $n$-angulated categories arising from covariantly finite subcategories, \emph{Comm. Algebra}, \textbf{45} (2017), 828--840.

\bibitem{Lu} \textsc{D. Luo}, Homological algebra in n-abelian categories, \emph{Proc. Indian Acad. Sci. Math. Sci.}, \textbf{127}(4) (2017), 625--656.

\bibitem{Mac} \textsc{S. Maclane}, \emph{Homology}, Springer-Verlag, New York, 1975.

\bibitem{Mi} \textsc{B. Mitchell}, \emph{Theory of categories}, Academic Press, New York, 1965.

\bibitem{We} \textsc{C. A. Weibel}, \emph{An Introduction to Homological Algebra}, Cambridge studies in advanced mathematics \textbf{38}, Cambridge University Press, Cambridge, 1994.

\end{thebibliography}
\end{document}